\renewenvironment{equation*}{\[}{\]\ignorespacesafterend}
\crefname{assumption}{Assumption}{Assumptions}
\title{When big data actually are low-rank, or entrywise approximation of certain function-generated matrices\thanks{This research was funded in part by the Disruptive Innovation – Early Career Seed Money funding program of the Austrian Academy of Sciences (\"OAW) and the Austrian Science Fund (FWF). This research was funded in part by the Austrian Science Fund (FWF) \href{https://doi.org/10.55776/F65}{10.55776/F65}. For open access purposes, the author has applied a CC BY public copyright license to any author-accepted manuscript version arising from this submission.}}
\author{
Stanislav Budzinskiy\thanks{Faculty of Mathematics, University of Vienna, Kolingasse 14-16, 1090 Vienna, Austria (\email{stanislav.budzinskiy\allowbreak@univie.ac.at}).}
}
\DeclareMathOperator{\diag}{diag}
\newcommand{\Real}{\mathbb{R}}
\newcommand{\N}{\mathbb{N}}
\newcommand{\hap}{\mathbin{\ast}}
\newcommand{\khrap}{\mathbin{\odot}}
\newcommand{\krp}{\mathbin{\otimes}}
\newcommand{\Ell}{\mathrm{L}}
\newcommand{\Point}[1]{\bm{\mathrm{#1}}}
\newcommand{\Matrix}[1]{\bm{\mathrm{#1}}}
\newcommand{\Tensor}[1]{\bm{\mathcal{#1}}}
\newcommand{\trans}{\intercal}
\newcommand{\ball}[1][]{#1{{\mathbb{B}}}}
\newcommand{\set}[3][]{#1\{ #2 : #3 #1\}}
\newcommand{\Norm}[3][]{#1\| #2 #1\|_{#3}}
\newcommand{\rank}[2][]{\mathrm{rank} #1( #2 #1)}
\newcommand{\ttrank}[2][]{\mathrm{rank_{TT}} #1( #2 #1)}
\newcommand{\epsrank}[2][]{\mathrm{rank_{\varepsilon}} #1( #2 #1)}
\begin{document}

\maketitle

\begin{abstract}
The article concerns low-rank approximation of matrices generated by sampling a smooth function of two $m$-dimensional variables. We identify several misconceptions surrounding a claim that, for a specific class of analytic functions, such $n \times n$ matrices admit accurate entrywise approximation of rank that is independent of $m$ and grows as $\log(n)$---colloquially known as ``big-data matrices are approximately low-rank''. We provide a theoretical explanation of the numerical results presented in support of this claim, describing three narrower classes of functions for which function-generated matrices can be approximated within an entrywise error of order $\varepsilon$ with rank $\mathcal{O}(\log(n) \varepsilon^{-2} \log(\varepsilon^{-1}))$ that is independent of the dimension $m$: (i)~functions of the inner product of~the~two variables, (ii)~functions of the Euclidean distance between the variables, and (iii)~shift-invariant positive-definite kernels. We extend our argument to tensor-train approximation of tensors generated with functions of the ``higher-order inner product'' of their multiple variables. We discuss our results in the context of low-rank approximation of (a)~growing datasets and (b)~attention in transformer neural networks.
\end{abstract}

\begin{keywords}
function-generated data, low-rank approximation, radial basis functions, positive-definite kernels, tensor trains, attention
\end{keywords}

\begin{MSCcodes}
15A23, 15A69, 68P99
\end{MSCcodes}

\section{Introduction}

Datasets are often arranged as matrices $\Matrix{F} \in \Real^{n_1 \times n_2}$ and obtained by (or at least modeled as the result of) sampling a bi-variate function $f : \Omega_1 \times \Omega_2 \to \Real$ so that 
\begin{equation*}
    \Matrix{F}(i,j) = f(\Point{x}_i, \Point{y}_j), \quad \{ \Point{x}_i \}_{i = 1}^{n_1} \subset \Omega_1, \quad \{ \Point{y}_j \}_{j = 1}^{n_2} \subset \Omega_2.
\end{equation*}
We shall say that such $\Matrix{F}$ is a function-generated matrix and $f$ is its generating function. Given the sheer volume of modern datasets, it is necessary to use low-parametric representation formats for matrices, the most prominent being approximate low-rank decompositions:
\begin{equation*}
    \Matrix{F} \approx \Matrix{A} \Matrix{B}^\trans, \quad \Matrix{A} \in \Real^{n_1 \times r}, \quad \Matrix{B} \in \Real^{n_2 \times r}, \quad r \ll \min\{ n_1, n_2 \}.
\end{equation*}
When a matrix admits accurate low-rank approximation, its storage and processing become more efficient, noise can be filtered out from its entries, and missing entries can be imputed.

How does the accuracy of low-rank approximation of a matrix depend on the properties of its generating function? Since we treat the matrix as a dataset, we consider the error in the maximum norm $\Norm{\Matrix{F}}{\max} = \max_{i \in [n_1],j \in [n_2]} |\Matrix{F}(i,j)|$. Small approximation error in the maximum norm guarantees that each entry is perturbed only slightly. The maximum norm goes hand in hand with the $\Ell_\infty$ norm of bounded functions $\Norm{f}{\Ell_\infty(\Omega_1 \times \Omega_2)} = \sup_{\Point{x} \in \Omega_1,\Point{y} \in \Omega_2} |f(\Point{x},\Point{y})|$. Indeed, if $\Matrix{F}$ is generated with $f$ and $\Matrix{G} \in \Real^{n_1 \times n_2}$ is generated with $g : \Omega_1 \times \Omega_2 \to \Real$, but with the same sampling points, then $\Norm{\Matrix{F} - \Matrix{G}}{\max} \leq \Norm{f - g}{\Ell_\infty(\Omega_1 \times \Omega_2)}$.

\begin{example}
\label{example:1d}
Let $\ball_1 = (-1,1)$ and consider a bounded function $f: \ball_1^2 \to \Real$. Assume that $y \mapsto f(x, y) \in C^{\infty}(\ball_1)$ for each $x \in \ball_1$ and its derivatives grow as
\begin{equation*}
    \Norm{\partial_y^\gamma f(x, \cdot)}{\Ell_\infty(\ball_1)} \leq C M^{\gamma} \Norm{f}{\Ell_\infty(\ball_1^2)}, \quad \gamma \in \N_0,
\end{equation*}
with constants $C \geq 1$ and $M > 0$. Then the Taylor series of $y \mapsto f(x, y)$ at $y = 0$ satisfies
\begin{equation*}
    f(x,y) = \sum\nolimits_{\gamma = 0}^{r-1} \partial_y^\gamma f(x, 0) \frac{y^\gamma}{\gamma!} + E_r(x,y), \quad \Norm{E_r(x,\cdot)}{\Ell_\infty(\ball_1)} \leq C \frac{M^r}{r!} \Norm{f}{\Ell_\infty(\ball_1^2)}.
\end{equation*}
Stirling's formula $r! > \sqrt{2 \pi r} (r/e)^r$ gives $\Norm{E_r}{\Ell_\infty(\ball_1^2)} < C e^{-r} \Norm{f}{\Ell_\infty(\ball_1^2)}$ when $r \geq e^2 M$.
\end{example}

The truncated Taylor series $f_r(x,y) = \sum_{\gamma = 0}^{r-1} \partial_y^\gamma f(x, 0) \frac{y^\gamma}{\gamma!}$ is a sum of $r$ terms with separated variables. When sampled at points $\{ x_i \}_{i = 1}^{n_1} \subset \ball_1$ and $\{ y_j \}_{j = 1}^{n_2} \subset \ball_1$, it produces a matrix 
\begin{equation*}
    \Matrix{G} = 
    \begin{bmatrix}
        \partial_y^0 f(x_1, 0) & \dots & \partial_y^{r-1} f(x_1, 0) \\
        \vdots & \ddots & \vdots \\
        \partial_y^0 f(x_{n_1}, 0) & \dots & \partial_y^{r-1} f(x_{n_1}, 0) \\
    \end{bmatrix}
    \begin{bmatrix}
        \frac{1}{0!} & & \\
        & \ddots & \\
        & & \frac{1}{(r-1)!}
    \end{bmatrix}
    \begin{bmatrix}
        y_1^0 & \dots & y_1^{r-1} \\
        \vdots & \ddots & \vdots \\
        y_{n_2}^0 & \dots & y_{n_2}^{r-1} \\
    \end{bmatrix}^\trans
\end{equation*}
of $\rank{\Matrix{G}} \leq r$ such that $\Norm{\Matrix{F} - \Matrix{G}}{\max} \leq C e^{-r} \Norm{f}{\Ell_\infty(\ball_1^2)}$. The smoothness of $f$ guarantees that the error of order $\varepsilon$ can be achieved with a matrix of rank at most $\max\{\log(C/\varepsilon), e^2 M\}$.

In many situations, the two variables of $f$ are multi-dimensional. Take, for example, the integral kernel of the Newtonian potential and kernel functions used in machine learning. How do the dimensions of the variables affect the approximability properties of the matrix? Below, we use the standard multi-index notation (see \cref{appendix:multindex}).

\begin{example}
\label{example:multi_dimensions}
Let $\ball_m = \set{\Point{x} \in \Real^m}{\Norm{\Point{x}}{2} < 1}$ and 
consider a bounded function $f: \ball_m^2 \to \Real$. Assume that $\Point{y} \mapsto f(\Point{x}, \Point{y}) \in C^{\infty}(\ball_m)$ for each $\Point{x} \in \ball_m$ and its partial derivatives grow as 
\begin{equation*}
    \Norm{\partial_{\Point{y}}^{\bm{\gamma}} f(\Point{x}, \cdot)}{\Ell_\infty(\ball_m)} \leq C M^{|\bm{\gamma}|} \Norm{f}{\Ell_\infty(\ball_m^2)}, \quad \bm{\gamma} \in \N_0^m,
\end{equation*}
with constants $C \geq 1$ and $M > 0$. Then the Taylor series of $\Point{y} \mapsto f(\Point{x}, \Point{y})$ at $\Point{y} = \Point{0}$ satisfies
\begin{equation*}
    f(\Point{x}, \Point{y}) = \sum\nolimits_{|\bm{\gamma}| < \rho} \partial_{\Point{y}}^{\bm{\gamma}} f(\Point{x}, \Point{0}) \frac{\Point{y}^{\bm{\gamma}}}{\bm{\gamma}!} + E_{\rho}(\Point{x}, \Point{y}), \quad \Norm{E_{\rho}(\Point{x},\cdot)}{\Ell_\infty(\ball_m)} \leq C \frac{m^{\rho} M^{\rho}}{{\rho}!} \Norm{f}{\Ell_\infty(\ball_m^2)}.
\end{equation*}
Stirling's formula gives $\Norm{E_{\rho}}{\Ell_\infty(\ball_m^2)} < C e^{-\rho} \Norm{f}{\Ell_\infty(\ball_m^2)}$ when $\rho \geq e^2 m M$. The truncation parameter $\rho$ and the number of terms $r$ in the decomposition satisfy $r = \genfrac{(}{)}{0pt}{1}{m + \rho - 1}{m}$ \cite[\S B.1]{adcock2022sparse}.
\end{example}

This purely analytical approach leads to poor approximation guarantees for matrices generated with functions of high-dimensional variables: it provides a matrix $\Matrix{G}$ of rank\footnote{We use $\genfrac{(}{)}{0pt}{1}{n}{k} \leq (e n / k)^k$ and $1 + x < e^x$ in the inequalities.}
\begin{equation}
\label{eq:analytical_bound}
    \rank{\Matrix{G}} \leq \genfrac{(}{)}{0pt}{1}{m + \rho_{\ast} - 1}{m} \leq e^m (1 + \tfrac{\rho_{\ast} - 1}{m})^m < e^{m + \rho_{\ast} - 1}, \quad \rho_{\ast} =  \lceil \max\{ \log(C / \varepsilon), e^2 m M \} \rceil,
\end{equation}
that achieves the entrywise approximation error of order $\varepsilon$. In this article, we investigate how \emph{combinations of analytical and algebraic techniques} can be used to derive better rank bounds for the entrywise approximation of function-generated matrices.

\subsection{Random embeddings}
The algebraic technique that we are going to use is based on random embeddings. Let $\Matrix{F} = \Matrix{A} \Matrix{B}^\trans$, where $\Matrix{A}$ and $\Matrix{B}$ can have an astronomical number of columns relative to the size of $\Matrix{F}$. Taking a random matrix $\Matrix{R}$ with independent Gaussian entries, we can guarantee that $\Matrix{F} \approx (\Matrix{A}\Matrix{R}) (\Matrix{B}\Matrix{R})^\trans$ entrywise with constant positive probability.

\begin{theorem}[{\cite[Lemma~5]{srebro2005rank}}]
\label{theorem:compression}
Let $\varepsilon \in (0,1)$ and $n_1, n_2 \in \N$. Consider
\begin{equation}
\label{eq:compression_rank}
    r = \left\lceil 9 \log\left(3 n_1 n_2\right) / \varepsilon^2 \right\rceil \in \N.
\end{equation}
For every $m \in \N$ and every pair of $\Matrix{A} \in \Real^{n_1 \times m}$ and $\Matrix{B} \in \Real^{n_2 \times m}$, there exists $\Matrix{G} \in \Real^{n_1 \times n_2}$ of $\rank{\Matrix{G}} \leq r$ such that
\begin{equation*}
    \Norm{\Matrix{A} \Matrix{B}^\trans - \Matrix{G}}{\max} \leq \varepsilon \Norm{\Matrix{A}}{2,\infty} \Norm{\Matrix{B}}{2,\infty}.
\end{equation*}
\end{theorem}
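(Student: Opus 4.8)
The plan is to prove \cref{theorem:compression} by a Johnson--Lindenstrauss-type random projection. Denote the rows of the two matrices by $\Point{a}_i = \Matrix{A}(i,:)^\trans \in \Real^m$ and $\Point{b}_j = \Matrix{B}(j,:)^\trans \in \Real^m$, so that $(\Matrix{A}\Matrix{B}^\trans)(i,j) = \dotp{\Point{a}_i}{\Point{b}_j}$; by homogeneity we may rescale and assume $\Norm{\Matrix{A}}{2,\infty} = \Norm{\Matrix{B}}{2,\infty} = 1$, i.e.\ $\Norm{\Point{a}_i}{2} \le 1$ and $\Norm{\Point{b}_j}{2} \le 1$ for all $i$ and $j$. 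Draw $\Matrix{R} \in \Real^{m \times r}$ with independent $\mathcal{N}(0, 1/r)$ entries and set $\Matrix{G} = (\Matrix{A}\Matrix{R})(\Matrix{B}\Matrix{R})^\trans$; this matrix has $\rank{\Matrix{G}} \le r$ for every realization of $\Matrix{R}$, and its $(i,j)$ entry equals $\dotp{\Matrix{R}^\trans\Point{a}_i}{\Matrix{R}^\trans\Point{b}_j}$. It therefore suffices to show that, with positive probability, $|\dotp{\Matrix{R}^\trans\Point{a}_i}{\Matrix{R}^\trans\Point{b}_j} - \dotp{\Point{a}_i}{\Point{b}_j}| \le \varepsilon$ holds simultaneously for all $(i,j) \in [n_1] \times [n_2]$.

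For a single pair I would isolate the bilinear form via polarization, $4\dotp{\Matrix{R}^\trans\Point{u}}{\Matrix{R}^\trans\Point{v}} = \Norm{\Matrix{R}^\trans(\Point{u}+\Point{v})}{2}^2 - \Norm{\Matrix{R}^\trans(\Point{u}-\Point{v})}{2}^2$ together with the analogous identity for $\dotp{\Point{u}}{\Point{v}}$, which reduces everything to the preservation of squared norms. For a fixed $\Point{w}$, the quantity $\Norm{\Matrix{R}^\trans\Point{w}}{2}^2$ equals $\Norm{\Point{w}}{2}^2 / r$ times a $\chi^2_r$ random variable, and the standard $\chi^2$ (Laurent--Massart) tail bounds give $\Prob{|\Norm{\Matrix{R}^\trans\Point{w}}{2}^2 - \Norm{\Point{w}}{2}^2| > t \Norm{\Point{w}}{2}^2} \le 2 e^{-c r t^2}$ for $t \in (0,1)$ with a universal constant $c$. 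Applying this to $\Point{w} = \Point{u} \pm \Point{v}$ with $t = \varepsilon$ and using the parallelogram identity $\Norm{\Point{u}+\Point{v}}{2}^2 + \Norm{\Point{u}-\Point{v}}{2}^2 = 2\Norm{\Point{u}}{2}^2 + 2\Norm{\Point{v}}{2}^2 \le 4$ shows that $|\dotp{\Matrix{R}^\trans\Point{u}}{\Matrix{R}^\trans\Point{v}} - \dotp{\Point{u}}{\Point{v}}| \le \varepsilon$ fails with probability at most $4 e^{-c r \varepsilon^2}$.

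A union bound over the $n_1 n_2$ index pairs then bounds the overall failure probability by $4 n_1 n_2 e^{-c r \varepsilon^2}$, which is strictly below $1$ once $r > (c\varepsilon^2)^{-1}\log(4 n_1 n_2)$; the explicit choice \eqref{eq:compression_rank}, with the constants $9$ and $3$, corresponds to substituting an admissible value of $c$ and absorbing the prefactor into the slack between $\log(3 n_1 n_2)$ and $\log(4 n_1 n_2)$ (equivalently, one bounds the single-pair failure probability directly by $2 e^{-r\varepsilon^2/9}$). Hence some realization of $\Matrix{G}$ satisfies all constraints, and undoing the rescaling yields $\Norm{\Matrix{A}\Matrix{B}^\trans - \Matrix{G}}{\max} \le \varepsilon \Norm{\Matrix{A}}{2,\infty}\Norm{\Matrix{B}}{2,\infty}$.

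The construction and the union bound are routine; the part that needs care is the concentration estimate in the second step. One must make the bound for the bilinear form $\dotp{\Matrix{R}^\trans\Point{u}}{\Matrix{R}^\trans\Point{v}}$ quantitative and, crucially, uniform over the whole range $\varepsilon \in (0,1)$: a product of two correlated Gaussians is only sub-exponential rather than sub-Gaussian, so one has to verify that in the Bernstein-type estimate it is the Gaussian (quadratic-in-$\varepsilon$) regime, not the linear one, that governs the entire interval, and to track the constant sharply enough to land on the stated rank \eqref{eq:compression_rank}.
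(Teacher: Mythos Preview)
The paper does not give its own proof of \cref{theorem:compression}: it quotes the result from \cite{srebro2005rank} and only remarks that the proof there proceeds via the Johnson--Lindenstrauss lemma, with a Hanson--Wright alternative in \cite{budzinskiy2024distance}. Your Gaussian-projection-plus-polarization-plus-union-bound argument is exactly the Johnson--Lindenstrauss route the paper attributes to the original source, and the one point you flag as delicate---checking that the quadratic regime of the $\chi^2$ tail governs the whole range $\varepsilon\in(0,1)$ so that the constants $9$ and $3$ in \eqref{eq:compression_rank} come out---is indeed the only place requiring care.
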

\cref{theorem:compression} was proved with the Johnson--Lindenstrauss lemma and was rediscovered in the specific cases of symmetric positive-semidefinite decompositions \cite{alon2013approximate} and the singular value decomposition \cite{udell2019big}. A different proof based on the Hanson--Wright inequality was proposed in \cite{budzinskiy2024distance} and further extended to tensor-train (TT, \cite{oseledets2009breaking,oseledets2011tensor}) approximation of tensors in \cite{budzinskiy2025entrywise}.

The error bound in \cref{theorem:compression} is closely related with the factorization norm\footnote{It is also known as the max-norm \cite{srebro2005rank}, which should not be confused with $\Norm{\cdot}{\max}$ that we use.} \cite{linial2007lower}
\begin{equation*}
    \gamma_2(\Matrix{F}) = \inf \set{\Norm{\Matrix{A}}{2,\infty} \Norm{\Matrix{B}}{2,\infty}}{\Matrix{F} = \Matrix{A} \Matrix{B}^\trans}.
\end{equation*}
This norm is used in matrix completion \cite{srebro2004maximum, rennie2005fast, lee2010practical, cai2016matrix, foucart2020weighted} and communication complexity \cite{linial2007complexity, linial2007lower, matouvsek2020factorization} and has been extended to tensors \cite{ghadermarzy2019near, harris2021deterministic, budzinskiy2025entrywise, cao20241} with applications to tensor~completion.

\subsection{Contributions}
\label{subsec:contrib}
Random embeddings were used in \cite{udell2019big} to derive a new rank bound for matrices generated with functions from \cref{example:multi_dimensions}. In \Cref{sec:general_functions}, we carefully examine the results of \cite{udell2019big} and the informal interpretations presented therein. By analyzing the rank bound of \cite{udell2019big}, we identify a number of inaccurate claims and clarify them.

We proceed by studying three specific, narrower classes of function-generated matrices for which we can derive low-rank approximation guarantees with rank bounds of the form
\begin{equation}
\label{eq:rank_bound}
    \mathcal{O}\left( \log(n) \varepsilon^{-2} \log\left(\varepsilon^{-1}\right) \right),
\end{equation}
where the hidden constant is independent of $m$. The corresponding generating functions are
\begin{itemize}
    \item $f(\Point{x}, \Point{y}) = h(\Point{x}^\trans \Point{y})$ in \cref{theorem:1d_function_dotp_gen},
    \item $f(\Point{x}, \Point{y}) = h(\Norm{\Point{x} - \Point{y}}{2}^2)$ in \cref{theorem:1d_function_dist_gen,corollary:1d_function_dist_gen_shifted}
\end{itemize}
with $h : \Real \to \Real$ satisfying \Cref{assumption:1d_analytic}. We derive a tighter rank bound $\mathcal{O}\left( \log(n) \varepsilon^{-2} \right)$ for
\begin{itemize}
    \item positive-definite kernels $f(\Point{x}, \Point{y}) = \kappa(\Point{x} - \Point{y})$ in \cref{theorem:kernel_function}.
\end{itemize}
We then consider function-generated tensors and prove in \cref{theorem:1d_function_tensor} that if
\begin{equation*}
    f(\Point{x}^{(1)}, \ldots, \Point{x}^{(d)}) = h(\langle \Point{x}^{(1)}, \ldots, \Point{x}^{(d)} \rangle), \quad \Point{x}^{(i)} \in \Real^m,
\end{equation*}
depends only on the ``higher-order inner product'' \cref{eq:hoip} of the variables then there exists a TT approximation that achieves entrywise error of order $\varepsilon$ with TT rank bounded componentwise as in \cref{eq:rank_bound}, where the hidden constant depends on $d$ but is independent of $m$. 

These low-rank approximation guarantees are primarily of theoretical interest. We compare them with the results of numerical experiments for moderate $n$ in \Cref{sec:numerical}.

\subsection{Contributions: growing datasets}
\label{subsec:datasets}
Consider a family of matrices $\{ \Matrix{F}_n \}_{n \in \N}$ of size $\Matrix{F}_n \in \Real^{n \times n}$ generated with the same function from one of the classes described in \Cref{subsec:contrib}. These matrices serve as a model of a growing dataset that is continuously augmented with new samples. Our results show that the sequence of $\varepsilon$-ranks, 
\begin{equation}
\label{eq:epsrank}
    \epsrank{\Matrix{F}_n} = \min\set{\rank{\Matrix{G}_n}}{\Norm{\Matrix{F}_n - \Matrix{G}_n}{\max} < \varepsilon},
\end{equation}
can be upper bounded by a sequence $\{ r_n \}_{n \in \N}$ that evolves in three stages as $n \to \infty$ (\cref{fig:dataset}). 

\begin{figure}[h]
\centering
	\includegraphics[width=0.65\textwidth]{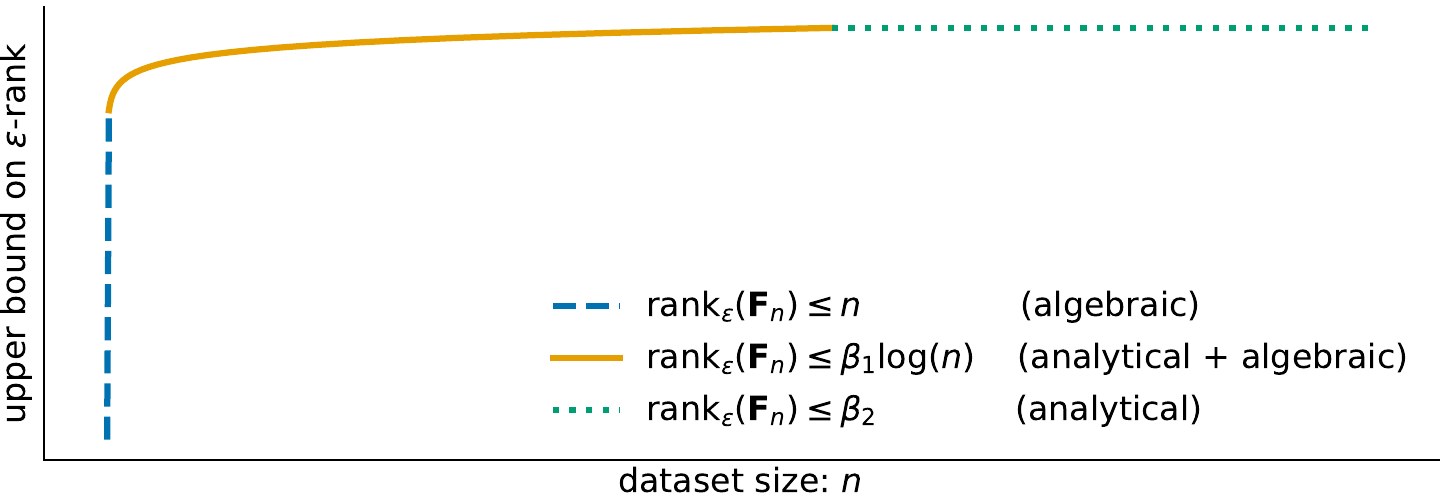}
\caption{Three-stage evolution of the $\varepsilon$-rank of a dataset as new samples are accumulated (schematic plot).}
\label{fig:dataset}
\end{figure}

For small $n$, the best (uniformly with respect to the collected samples) bound is algebraic: we cannot guarantee that $\Matrix{F}_n$ can be approximated with a low-rank matrix, so $r_n = n$.

The second, intermediate stage begins when our rank bound \cref{eq:rank_bound}, obtained with a combination of analytical and algebraic techniques, becomes tighter than $n$. From this point on, the bound on the $\varepsilon$-ranks grows slowly as $r_n = \beta_1 \log(n)$ with a constant $\beta_1 > 0$ that is independent of the dimension $m$. Notably, the critical size $n$ at which the transition takes place depends only on the properties of the function $h : \Real \to \Real$ (see \Cref{subsec:contrib}) and not on the dimension $m$ of the variables of $f(\Point{x}, \Point{y})$.

Finally, for sufficiently large $n$, the logarithmic upper bound starts to exceed\footnote{Note that $m$ needs to be sufficiently large in order for the logarithmic stage to be present.} the analytical rank bound \cref{eq:analytical_bound}, $\beta_2 \in \N$, that depends on $m$ but not on $n$. As a result, $r_n = \beta_2$ for all $n$ as $n \to \infty$.  For the specific classes of generating functions described in \Cref{subsec:contrib}, the analytical rank bound \cref{eq:analytical_bound} can be improved to a polynomial in $m$ (see \Cref{sec:dist,sec:kernels}). 

It will likely be problematic to use these observations in practice due to the high computational cost of entrywise low-rank approximation (\Cref{sec:numerical}) and the fact that $r_n$ is low only relative to $n$. Indeed, consider the simplest example $f(\Point{x}, \Point{y}) = \Point{x}^\trans \Point{y}$ with sufficiently large $m$, which we can analyze directly with \cref{theorem:compression}. Let $\varepsilon = 0.1$, then the second stage begins at $n = 18964$ and we have $r_{10^5} = 21713$, $r_{10^7} = 30002$, and $r_{10^9} = 38291$. The growth is slow, but in absolute terms the ranks are large. Nonetheless, \cref{fig:varm} gives us a general idea of how the approximability of a dataset changes as new samples are accumulated. Numerical experiments in \Cref{sec:numerical} with moderate $n$, to which our reasoning does not formally apply, still show that the quality of approximation changes slowly.

The same three stages were claimed in \cite{udell2019big} to exist for the generating functions from \cref{example:multi_dimensions}, yet no proof was provided. We show in \Cref{sec:general_functions} that the logarithmic stage can exist only in very restrictive settings. In addition, if the transition from linear to logarithmic growth does happen, the critical size $n$ at which it takes place depends strongly on $m$.

\subsection{Contributions: fast attention}
\label{subsec:attention}
The transformer neural-network architecture \cite{vaswani2017attention} is widely used in natural language processing and computer vision. To process sequences of input tokens, transformers rely on the \emph{attention} mechanism. Consider $n \in \N$ tokens. With each of them, we associate three vectors of the same dimension $m \in \N$ of latent representation: a query $\Matrix{q}_i \in \Real^m$, a key $\Matrix{k}_i \in \Real^m$, and a value $\Matrix{v}_i \in \Real^m$. These vectors are stacked together as matrices $\Matrix{Q}, \Matrix{K}, \Matrix{V} \in \Real^{n \times m}$ in the definition of the dot-product attention:
\begin{equation*}
    \mathrm{Att} = \Matrix{D}^{-1} \Matrix{A} \Matrix{V}, \quad \Matrix{A} = \exp\big(\Matrix{Q} \Matrix{K}^\trans / \sqrt{m}\big), \quad \Matrix{D} = \diag\left(\sum\nolimits_j\Matrix{A}(1,j), \ldots, \sum\nolimits_j\Matrix{A}(n,j)\right),
\end{equation*}
where the exponential is applied elementwise. The straightforward evaluation of attention requires $\mathcal{O}(n^2 m)$ operations. As a result, long sequences of tokens produce a heavy computational burden for transformers.

Low-rank methods have been proposed to approximate attention with computational complexity that is subquadratic in $n$ \cite{wang2020linformer, choromanski2020rethinking, han2023hyperattention, alman2024fast, sarlos2024hardness}. In particular, \cite{choromanski2020rethinking, sarlos2024hardness} address the problem of approximating the matrix $\Matrix{A}$. The algorithmic hardness of obtaining quasi-optimal low-rank approximations of $\Matrix{A}$ in the Frobenius norm is the topic of \cite{sarlos2024hardness}. It was shown in \cite{choromanski2020rethinking} that $\Matrix{A}$ can be approximated within an entrywise error $\varepsilon$ with rank $\mathcal{O}(m \varepsilon^{-2} \log(m / \varepsilon))$ based on orthogonal random features---our \cref{theorem:1d_function_dotp_gen} guarantees a rank bound $\mathcal{O}(\log(n) \varepsilon^{-2} \log(1/\varepsilon))$, which is an improvement when the latent dimension $m$ is greater than $\log(n)$.

In higher-order tensor attention \cite{alman2023capture, sanford2024representational}, multiple keys and values are associated with every token, giving rise to key matrices $\Matrix{K}_1, \ldots, \Matrix{K}_d \in \Real^{n \times m}$ and value matrices $\Matrix{V}_1, \ldots, \Matrix{V}_d \in \Real^{n \times m}$. The higher-order attention is then defined via the Khatri--Rao product (see \cref{eq:khrap} below):
\begin{equation*}
    \mathrm{Att}_d = \Matrix{D}^{-1} \Matrix{A} (\Matrix{V}_1 \khrap \cdots \khrap \Matrix{V}_d), \quad \Matrix{A} = \exp\big(\Matrix{Q} (\Matrix{K}_1 \khrap \cdots \khrap \Matrix{K}_d)^\trans / \sqrt{m}\big).
\end{equation*}
We can recognize $\Matrix{Q} (\Matrix{K}_1 \khrap \cdots \khrap \Matrix{K}_d)^\trans$ as an unfolding matrix of an order-$(d+1)$ tensor of size $n \times \cdots \times n$ given in the canonical polyadic (CP) format \cite{kolda2009tensor,ballard2025tensor} with factors $\Matrix{Q}$, $\Matrix{K}_d$, \ldots, $\Matrix{K}_1$. By \cref{theorem:1d_function_tensor}, the matrix $\Matrix{A}$, when seen as a tensor of order $d+1$, can be approximated with TT rank independent of the latent dimension $m$. TT approximation of higher-order attention is a novel idea, which could stimulate the development of new algorithms for transformers.

\subsection{Notation}
\label{subsec:notation}
We denote the $\ell_p$ norm by $\Norm{\Point{x}}{p}$ and the norm induced by a symmetric positive-definite matrix by $\Norm{\Point{x}}{\Matrix{W}} = \sqrt{\Point{x}^\trans \Matrix{W} \Point{x}}$. We use $\ball_m = \set{\Point{x} \in \Real^m}{\Norm{\Point{x}}{2} < 1}$ and $\overline{\ball}_{m,R} = \set{\Point{x} \in \Real^m}{\Norm{\Point{x}}{2} \leq R}$. We denote the smallest and largest entries of a matrix by $\min\{\Matrix{A}\}$ and $\max\{\Matrix{A}\}$, its $\ell_p \to \ell_q$ operator norm by $\Norm{\Matrix{A}}{p,q} = \sup\set{\Norm{\Matrix{Ax}}{q}}{\Norm{\Matrix{x}}{p} = 1}$ and $\Norm{\Matrix{A}}{p} = \Norm{\Matrix{A}}{p,p}$. Note that $\Norm{\Matrix{A}}{\max} = \max\{ |\min\{\Matrix{A}\}|, |\max\{\Matrix{A}\}| \}$ and $\Norm{\Matrix{A}}{2,\infty} = \max_{i} \Norm{\Matrix{A}(i,:)}{2}$. We denote the Hadamard (entrywise) product of matrices by $\hap$ and use $\Matrix{A}^{\hap s}$ for $\Matrix{A} \hap \cdots \hap \Matrix{A}$ multiplied $s$ times. We write the Kronecker product as $\krp$ and the Khatri--Rao product as $\khrap$, where
\begin{equation}
\label{eq:khrap}
    \Matrix{A} \khrap \Matrix{B} = \begin{bmatrix}
        \Matrix{A}(:,1) \krp \Matrix{B}(:,1) & \cdots & \Matrix{A}(:,n) \krp \Matrix{B}(:,n)
    \end{bmatrix}.
\end{equation}
We use $[n] = \{ 1, \ldots, n \}$ for $n \in \N$. For a tuple of numbers, $(r_1, \ldots, r_{n}) \preccurlyeq r$ means that $r_s \leq r$ for every $s \in [n]$.
\section{Functions from Example~\ref{example:multi_dimensions}}
\label{sec:general_functions}
An alternative to the rank bound \cref{eq:analytical_bound} was derived in \cite{udell2019big}. The authors study so-called \emph{latent variable models} (LVMs). In our terminology, an LVM is the set of all matrices generated with a continuous function $f : \Omega_1 \times \Omega_2 \to \Real$. An LVM is said to be \emph{nice} if $\Omega_1, \Omega_2 \subset \Real^m$ are contained in a closed ball $\overline{\ball}_{m,R}$ and the function $f$ satisfies the assumptions of \Cref{example:multi_dimensions}---the parameters of a nice LVM are $(m, R, C, M)$, where \textquote{$m$ is allowed to be extremely large} \cite[Definition~4.1]{udell2019big}.\footnote{In all quotations from \cite{udell2019big}, we use the notation of our article instead of the equivalent original notation.} The main result of \cite{udell2019big} is a rank bound for nice LVMs, which we formulate in the specific case of $R = 1$.

\begin{theorem}[{\cite[Theorem~4.2]{udell2019big}}]
\label{theorem:ut}
Let $f : \ball_m^2 \to \Real$ satisfy the assumptions of \cref{example:multi_dimensions}. For all $\varepsilon \in (0,1)$ and $n_1, n_2 \in \N$, consider
\begin{equation*}
    r = \left\lceil 8 \log(n_1 + n_2 + 1) \left(1 + \frac{2(C_u + C_v + 1)}{\varepsilon}\right)^2 \right\rceil,
\end{equation*}
where $C_u$, $C_v$ are constants that depend on the LVM. Then for every $\Matrix{F} \in \Real^{n_1 \times n_2}$ generated with $f$, there exists $\Matrix{G} \in \Real^{n_1 \times n_2}$ of $\rank{\Matrix{G}} \leq r$ such that $\Norm{\Matrix{F} - \Matrix{G}}{\max} \leq \varepsilon \Norm{f}{\Ell_\infty(\ball_m^2)}$.
\end{theorem}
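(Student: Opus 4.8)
The plan is to combine the analytical Taylor expansion from \Cref{example:multi_dimensions} with the random-embedding compression of \Cref{theorem:compression}, rather than applying the purely analytical bound \cref{eq:analytical_bound} directly. First I would fix $\Matrix{F}$ generated with $f$ at sampling points $\{\Point{x}_i\} \subset \ball_m$ and $\{\Point{y}_j\} \subset \ball_m$, and pick a truncation level $\rho$ for the multivariate Taylor series centered at $\Point{0}$. By \cref{example:multi_dimensions}, writing $\Matrix{F} = \Matrix{G}_\rho + \Matrix{E}_\rho$, where $\Matrix{G}_\rho$ is generated by the truncated Taylor polynomial $f_\rho(\Point{x},\Point{y}) = \sum_{|\bm{\gamma}| < \rho} \partial_{\Point{y}}^{\bm{\gamma}} f(\Point{x},\Point{0}) \Point{y}^{\bm{\gamma}}/\bm{\gamma}!$, we get $\Norm{\Matrix{E}_\rho}{\max} < C e^{-\rho} \Norm{f}{\Ell_\infty(\ball_m^2)}$ as soon as $\rho \geq e^2 m M$. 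The matrix $\Matrix{G}_\rho$ admits an exact factorization $\Matrix{G}_\rho = \Matrix{A}\Matrix{B}^\trans$ where the columns are indexed by multi-indices $\bm{\gamma}$ with $|\bm{\gamma}| < \rho$: the $i$-th row of $\Matrix{A}$ has entries $\partial_{\Point{y}}^{\bm{\gamma}} f(\Point{x}_i,\Point{0})/\bm{\gamma}!$ and the $j$-th row of $\Matrix{B}$ has entries $\Point{y}_j^{\bm{\gamma}}$, so $\Matrix{A}$ has $\binom{m+\rho-1}{m}$ columns — astronomically many, but finite. This is exactly the situation \cref{theorem:compression} is designed for.

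Next I would bound the row norms $\Norm{\Matrix{A}}{2,\infty}$ and $\Norm{\Matrix{B}}{2,\infty}$. For $\Matrix{B}$, since $\Norm{\Point{y}_j}{2} < 1$, the row norm squared is $\sum_{|\bm{\gamma}| < \rho} \Point{y}_j^{2\bm{\gamma}} \leq \sum_{|\bm{\gamma}| < \rho} \frac{\bm{\gamma}!}{|\bm{\gamma}|!}\binom{|\bm{\gamma}|}{\bm{\gamma}}\Point{y}_j^{2\bm{\gamma}}$, which by the multinomial theorem telescopes into a bound depending only on $\rho$ (not $m$); similarly, the derivative growth assumption $\Norm{\partial_{\Point{y}}^{\bm{\gamma}} f(\Point{x},\cdot)}{\Ell_\infty} \leq C M^{|\bm{\gamma}|}\Norm{f}{\Ell_\infty(\ball_m^2)}$ together with the multinomial identity $\sum_{|\bm{\gamma}| = k} \frac{k!}{\bm{\gamma}!} = m^k$ controls $\Norm{\Matrix{A}}{2,\infty}^2$ by something like $C^2 \Norm{f}{\Ell_\infty(\ball_m^2)}^2 \sum_{k<\rho} \frac{(mM^2)^k}{(k!)^2}$ — here is where the constants $C_u$, $C_v$ of the stated theorem come from, and they will depend on $\rho$ and hence on $m$. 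Applying \cref{theorem:compression} to $\Matrix{A}\Matrix{B}^\trans$ with a target accuracy $\delta$ yields $\Matrix{H}$ of rank $\lceil 9\log(3n_1 n_2)/\delta^2 \rceil$ with $\Norm{\Matrix{G}_\rho - \Matrix{H}}{\max} \leq \delta \Norm{\Matrix{A}}{2,\infty}\Norm{\Matrix{B}}{2,\infty} = \delta(C_u+C_v+1)\Norm{f}{\Ell_\infty(\ball_m^2)}$ after renaming constants, and then the triangle inequality $\Norm{\Matrix{F}-\Matrix{H}}{\max} \leq \Norm{\Matrix{E}_\rho}{\max} + \Norm{\Matrix{G}_\rho - \Matrix{H}}{\max}$ gives the total error. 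Balancing the two contributions — choosing $\rho$ so that $Ce^{-\rho} \lesssim \varepsilon$ and $\delta$ so that $\delta(C_u+C_v+1) \lesssim \varepsilon$ — produces the claimed rank $r = \lceil 8\log(n_1+n_2+1)(1 + 2(C_u+C_v+1)/\varepsilon)^2 \rceil$ after absorbing the logarithmic factor $\log(3n_1n_2) \lesssim \log(n_1+n_2+1)$ into the constant $9 \to 8$ (or adjusting $n_1+n_2+1$ versus $3n_1n_2$).

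The main obstacle I anticipate is the careful tracking of the $m$-dependence in the factor-norm constants $C_u$ and $C_v$: the number of Taylor terms $\binom{m+\rho-1}{m}$ explodes in $m$, and one must verify that the \emph{row norms} $\Norm{\Matrix{A}}{2,\infty}$, $\Norm{\Matrix{B}}{2,\infty}$ nonetheless stay finite (which they do, since the relevant sums $\sum_k (mM^2)^k/(k!)^2$ and $\sum_k m^k \cdot (\text{small})$ converge) — but finite does not mean $m$-independent, and indeed the whole point of \Cref{sec:general_functions} is that this bound, unlike those for the narrower function classes, carries a hidden $m$-dependence through $C_u$ and $C_v$ that cannot be removed. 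A secondary technical point is combining the two error sources with the correct success probability: \cref{theorem:compression} only asserts existence of $\Matrix{H}$, so no union bound over the $\Matrix{E}_\rho$ part is needed, but one should state the rank cleanly so the $\rho$-dependent piece is fully absorbed into the $\varepsilon^{-2}$ term and the constants.
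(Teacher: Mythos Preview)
Your proposal is correct and follows essentially the same two-step strategy the paper attributes to \cite{udell2019big}: first obtain an accurate finite-rank factorization $\Matrix{G}_\rho = \Matrix{A}\Matrix{B}^\trans$ from the truncated multivariate Taylor series (this is \cref{lemma:ut}), then compress it with the random-embedding bound of \cref{theorem:compression} and combine the two errors by the triangle inequality. Your anticipation that $C_u$ and $C_v$ inevitably carry $m$-dependence is exactly the point the paper makes; the only cosmetic slip is the direction of the inequality ``$\log(3n_1n_2)\lesssim\log(n_1+n_2+1)$'' (it goes the other way), which simply reflects that \cite{udell2019big} uses a slightly different variant of the compression lemma than \cref{theorem:compression}.
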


The first step in the proof of \cref{theorem:ut} in \cite{udell2019big} consists in building an accurate factorization of the matrix $\Matrix{F}$ based on the Taylor series of the generating function $f$. The following lemma essentially repeats \cref{example:multi_dimensions} with a looser analytical rank bound and provides an estimate of the $\ell_2 \to \ell_\infty$ norms of the factors.\footnote{In the formulation of \cref{lemma:ut} in \cite{udell2019big}, the estimates are written without the squares as $\Norm{\Matrix{u}_i}{2} \leq C_u \Norm{f}{\Ell_\infty}$ and $\Norm{\Matrix{v}_j}{2} \leq C_v \Norm{f}{\Ell_\infty}$, while the ones proved are $\Norm{\Matrix{u}_i}{2}^2 \leq C_u \Norm{f}{\Ell_\infty}$ and $\Norm{\Matrix{v}_j}{2}^2 \leq C_v \Norm{f}{\Ell_\infty}$.}

\begin{lemma}[{\cite[Lemma~4.3]{udell2019big}}]
\label{lemma:ut}
Let $f : \ball_m^2 \to \Real$ satisfy the assumptions of \Cref{example:multi_dimensions} and $\varepsilon \in (0,1)$. Then for every $\Matrix{F} \in \Real^{n_1 \times n_2}$ generated with $f$, there exists $\Matrix{G} \in \Real^{n_1 \times n_2}$ of $\rank{\Matrix{G}} \leq (\rho+1) m^\rho$ with $\rho \leq \max\{ 2emM, \log_2(C/\varepsilon) \} + 1$ such that 
\begin{equation*}
    \Norm{\Matrix{F} - \Matrix{G}}{\max} \leq \varepsilon \Norm{f}{\Ell_\infty(\ball_m^2)}.
\end{equation*}
Furthermore, $\Matrix{G}$ admits a rank-$k$ factorization with $k \leq (\rho+1) m^\rho$ as
\begin{equation*}
    \Matrix{G}(i,j) = \Matrix{u}_i^\trans \Matrix{v}_j, \quad i \in [n_1], \quad j \in [n_2],
\end{equation*}
where each $\Matrix{u}_i \in \Real^{k}$ and $\Matrix{v}_j \in \Real^k$ obeys $\Norm{\Matrix{u}_i}{2}^2 \leq C_u \Norm{f}{\Ell_\infty(\ball_m^2)}$ and $\Norm{\Matrix{v}_j}{2}^2 \leq C_v \Norm{f}{\Ell_\infty(\ball_m^2)}$. Here, $C_u$ and $C_v$ are constants depending on the LVM but not on the dimensions $n_1$ or $n_2$.
\end{lemma}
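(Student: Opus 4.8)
The plan is to follow the route of \cref{example:multi_dimensions}: truncate the Taylor series of $\Point{y} \mapsto f(\Point{x},\Point{y})$ at $\Point{y} = \Point{0}$, turn the truncated polynomial into a low-rank matrix by separation of variables, and read off the $\ell_2$ norms of the resulting factors directly from the derivative-growth hypothesis. First I would fix the truncation order $\rho = \lceil \max\{ 2emM, \log_2(C/\varepsilon) \} \rceil$, so that $\rho \le \max\{ 2emM, \log_2(C/\varepsilon)\} + 1$ and $\rho \ge 1$ (both arguments of the maximum are positive, since $M > 0$, $m \ge 1$ and $C \ge 1 > \varepsilon$). By \cref{example:multi_dimensions}, the remainder obeys $\Norm{E_\rho(\Point{x},\cdot)}{\Ell_\infty(\ball_m)} \le C \frac{m^\rho M^\rho}{\rho!}\Norm{f}{\Ell_\infty(\ball_m^2)}$ for every $\Point{x} \in \ball_m$; using only the crude bound $\rho! \ge (\rho/e)^\rho$ I would estimate $C \frac{m^\rho M^\rho}{\rho!} \le C(emM/\rho)^\rho \le C\,2^{-\rho} \le C\,2^{-\log_2(C/\varepsilon)} = \varepsilon$, where the middle inequality uses $\rho \ge 2emM$ and the last one uses $\rho \ge \log_2(C/\varepsilon)$. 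Hence $\Norm{E_\rho}{\Ell_\infty(\ball_m^2)} \le \varepsilon \Norm{f}{\Ell_\infty(\ball_m^2)}$.

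Next I would take $\Matrix{G}(i,j) = \sum_{\bm{\gamma} \in \Lambda_\rho} \partial_{\Point{y}}^{\bm{\gamma}} f(\Point{x}_i,\Point{0}) \frac{\Point{y}_j^{\bm{\gamma}}}{\bm{\gamma}!}$, the truncated Taylor polynomial sampled at the nodes, where $\Lambda_\rho = \set{\bm{\gamma} \in \N_0^m}{|\bm{\gamma}| < \rho}$; then $\Matrix{F} - \Matrix{G}$ is $E_\rho$ evaluated at the nodes, so $\Norm{\Matrix{F} - \Matrix{G}}{\max} \le \Norm{E_\rho}{\Ell_\infty(\ball_m^2)} \le \varepsilon\Norm{f}{\Ell_\infty(\ball_m^2)}$. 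Each summand splits into a function of $\Point{x}_i$ times a function of $\Point{y}_j$, so $\rank{\Matrix{G}} \le k := |\Lambda_\rho|$; to bound $k$ I would injectively encode each $\bm{\gamma}$ with $|\bm{\gamma}| = \ell$ as the sorted word in $[m]^\ell$ listing its coordinates with multiplicity, which gives $\#\set{\bm{\gamma}}{|\bm{\gamma}|=\ell} \le m^\ell$ and hence $k \le \sum_{\ell=0}^{\rho-1} m^\ell \le (\rho+1)m^\rho$. Enumerating $\Lambda_\rho$ and setting $\Matrix{u}_i = \big(\partial_{\Point{y}}^{\bm{\gamma}} f(\Point{x}_i,\Point{0})\big)_{\bm{\gamma} \in \Lambda_\rho}$ and $\Matrix{v}_j = \big(\Point{y}_j^{\bm{\gamma}}/\bm{\gamma}!\big)_{\bm{\gamma} \in \Lambda_\rho}$ in $\Real^k$ yields $\Matrix{G}(i,j) = \Matrix{u}_i^\trans \Matrix{v}_j$. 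The derivative-growth hypothesis gives $|\partial_{\Point{y}}^{\bm{\gamma}} f(\Point{x}_i,\Point{0})| \le C M^{|\bm{\gamma}|} \Norm{f}{\Ell_\infty(\ball_m^2)}$, so $\Norm{\Matrix{u}_i}{2}^2 \le C_u \Norm{f}{\Ell_\infty(\ball_m^2)}^2$ with $C_u := C^2 \sum_{\bm{\gamma} \in \Lambda_\rho} M^{2|\bm{\gamma}|}$; and $|\Point{y}_j^{\bm{\gamma}}| \le 1$ for $\Point{y}_j \in \ball_m$ gives $\Norm{\Matrix{v}_j}{2}^2 \le C_v$ with $C_v := \sum_{\bm{\gamma} \in \Lambda_\rho} (\bm{\gamma}!)^{-2}$. (Up to whether $\Norm{f}{\Ell_\infty(\ball_m^2)}$ enters to the first or the second power, cf.\ the footnote above, this is the asserted estimate; the two coincide when $\Norm{f}{\Ell_\infty(\ball_m^2)} \le 1$.)

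There is no real obstacle here — once $\rho$ is chosen the whole argument is a direct computation — but the step that requires the most care, and the reason this lemma is stated rather than merely cited, is the bookkeeping of the $m$-dependence. The order $\rho$ grows linearly in $m$, so the rank bound $(\rho+1)m^\rho$ is doubly exponential in $m$, and the constants $C_u$ and $C_v$ above depend on $m$ (through $\rho$ and $|\Lambda_\rho|$), even though they are independent of $n_1$ and $n_2$. Tracking this explicitly is exactly what \cref{sec:general_functions} needs in order to show that feeding this lemma into \cref{theorem:compression} still does not yield a rank bound free of the dimension $m$.
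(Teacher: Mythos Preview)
Your proposal is correct and follows exactly the Taylor-truncation route that the paper attributes to \cite{udell2019big} (the paper itself does not give an independent proof of this cited lemma, only discusses its constants). The one cosmetic difference from the paper's own reconstruction of the argument in \Cref{appendix:proof_ut} is the placement of the factorial weights: you put all of $1/\bm{\gamma}!$ into $\Matrix{v}_j$, whereas the proof of \cref{theorem:ut_tighter} splits it symmetrically as $1/\sqrt{\bm{\gamma}!}$ in each factor, which is what yields the closed-form bounds $\Norm{\Matrix{u}_i}{2}^2 \le C^2 e^{mM^2}\Norm{f}{\Ell_\infty(\ball_m^2)}^2$ and $\Norm{\Matrix{v}_j}{2}^2 \le e^m$ used later in the paper's analysis of the $m$-dependence.
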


An analogue of \cref{theorem:compression} is then applied to the factorization of $\Matrix{G}$ to finish the proof of \cref{theorem:ut}. This proof is mathematically correct; however, throughout the text of \cite{udell2019big}, the authors make a number of informal statements that are not in accordance with what they proved. In particular, it is stated that the rank bound of \cref{theorem:ut} is independent of $m$:
\begin{equation}
\tag{Claim A}\label{eq:claima}
\parbox{\dimexpr\linewidth-8em}{%
    ``Our main result, Theorem~4.2 eliminates the dependence on the dimension $m$ of the latent variables by introducing a dependence on the dimension of the matrix.''(\cite[p.148]{udell2019big})
}
\end{equation}
\ref{eq:claima} would follow from \cref{theorem:ut} if both constants $C_u$ and $C_v$ did not depend on $m$, but they do: studying the proof of \cref{lemma:ut} in \cite{udell2019big}, we discover that they are bounded by $C_u > C^2 m^m$ and $C_v > m^m$ \cite[p.154]{udell2019big}. In \Cref{theorem:ut_tighter} below, we manage to make the dependence on $m$ exponential,\footnote{This refers to the explicit dependence. The bound also depends on $m$ implicitly through $C$ and $M$.} but not eliminate it. \ref{eq:claima} turns out to be false.

\begin{theorem}
\label{theorem:ut_tighter}
Let $f : \ball_m^2 \to \Real$ satisfy the assumptions of \cref{example:multi_dimensions}. For all $\varepsilon \in (0,1)$ and $n_1, n_2 \in \N$, consider
\begin{equation*}
    r = \left\lceil 9 \log(3 n_1 n_2) \frac{C^2}{\varepsilon^2} e^{m(1 + M^2)} \right\rceil.
\end{equation*}
Then for every $\Matrix{F} \in \Real^{n_1 \times n_2}$ generated with $f$, there exists $\Matrix{G} \in \Real^{n_1 \times n_2}$ of $\rank{\Matrix{G}} \leq r$ such that $\Norm{\Matrix{F} - \Matrix{G}}{\max} \leq \varepsilon \Norm{f}{\Ell_\infty(\ball_m^2)}$.
\end{theorem}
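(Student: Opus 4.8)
The plan is to reuse the two-step strategy behind the proof of \cref{theorem:ut}---Taylor-truncate $f$ to get an algebraic factorization $\Matrix{G}_0 = \Matrix{A}\Matrix{B}^\trans$ of an approximant, then compress that factorization with \cref{theorem:compression}---but to estimate the $\ell_2 \to \ell_\infty$ norms of $\Matrix{A}$ and $\Matrix{B}$ far more carefully than \cref{lemma:ut} does. The entire gain over \cref{lemma:ut} is concentrated in a single coordinatewise bound.

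Fix $\Matrix{F}$ generated with $f$ at points $\{\Point{x}_i\}_{i=1}^{n_1}, \{\Point{y}_j\}_{j=1}^{n_2} \subset \ball_m$; the case $\Norm{f}{\Ell_\infty(\ball_m^2)} = 0$ is trivial, so assume $\Norm{f}{\Ell_\infty(\ball_m^2)} > 0$. For a truncation order $\rho \in \N$ I would take the matrix $\Matrix{G}_0(i,j) = \sum_{|\bm{\gamma}| < \rho} \partial_{\Point{y}}^{\bm{\gamma}} f(\Point{x}_i, \Point{0})\, \Point{y}_j^{\bm{\gamma}} / \bm{\gamma}!$, which by \cref{example:multi_dimensions} satisfies $\Norm{\Matrix{F} - \Matrix{G}_0}{\max} \le \Norm{E_\rho}{\Ell_\infty(\ball_m^2)} < C e^{-\rho} \Norm{f}{\Ell_\infty(\ball_m^2)}$ once $\rho \ge e^2 m M$, and use the \emph{symmetric} factorization $\Matrix{G}_0 = \Matrix{A}\Matrix{B}^\trans$ with $\Matrix{A}(i, \bm{\gamma}) = \partial_{\Point{y}}^{\bm{\gamma}} f(\Point{x}_i, \Point{0}) / \sqrt{\bm{\gamma}!}$ and $\Matrix{B}(j, \bm{\gamma}) = \Point{y}_j^{\bm{\gamma}} / \sqrt{\bm{\gamma}!}$, the multi-index $\bm{\gamma}$ running over $|\bm{\gamma}| < \rho$.

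The crucial estimate is of $\Norm{\Matrix{A}}{2,\infty}$ and $\Norm{\Matrix{B}}{2,\infty}$. Enlarging each row-norm sum to run over all of $\N_0^m$ and factoring it over coordinates, the derivative bound of \cref{example:multi_dimensions} yields $\Norm{\Matrix{A}(i,:)}{2}^2 \le C^2 \Norm{f}{\Ell_\infty(\ball_m^2)}^2 \sum_{\bm{\gamma} \in \N_0^m} M^{2|\bm{\gamma}|}/\bm{\gamma}! = C^2 \Norm{f}{\Ell_\infty(\ball_m^2)}^2 e^{m M^2}$, while $\Point{y}_j \in \ball_m$ gives $\Norm{\Matrix{B}(j,:)}{2}^2 \le \sum_{\bm{\gamma} \in \N_0^m} \Point{y}_j^{2\bm{\gamma}}/\bm{\gamma}! = e^{\Norm{\Point{y}_j}{2}^2} < e$. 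Since the $\Point{y}_j$ lie in the \emph{open} ball and are finite in number, $\mu := \max_j \Norm{\Point{y}_j}{2}^2 < 1$, hence $\Norm{\Matrix{A}}{2,\infty} \Norm{\Matrix{B}}{2,\infty} \le C e^{(m M^2 + \mu)/2} \Norm{f}{\Ell_\infty(\ball_m^2)} < C e^{m(1 + M^2)/2} \Norm{f}{\Ell_\infty(\ball_m^2)}$ using $\mu < 1 \le m$. This is precisely where the $m^{m}$-scale blow-up concealed in the constants $C_u, C_v$ of \cref{lemma:ut} is downgraded to an honest exponential in $m$, which is the factor that then propagates into $r$.

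To finish, apply \cref{theorem:compression} to $\Matrix{A}, \Matrix{B}$ with accuracy parameter $\delta = \varepsilon / (C e^{m(1+M^2)/2}) \in (0,1)$, for which $\lceil 9 \log(3 n_1 n_2)/\delta^2 \rceil$ equals the $r$ of the statement: this produces $\Matrix{G}$ with $\rank{\Matrix{G}} \le r$ and, by the bound above, $\Norm{\Matrix{G}_0 - \Matrix{G}}{\max} \le \delta\, \Norm{\Matrix{A}}{2,\infty} \Norm{\Matrix{B}}{2,\infty} \le \varepsilon e^{(\mu - m)/2} \Norm{f}{\Ell_\infty(\ball_m^2)} < \varepsilon \Norm{f}{\Ell_\infty(\ball_m^2)}$, with a slack $\eta > 0$ that does not involve $\rho$. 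As $\rho$ does not enter $r$, I am free to choose $\rho \ge \max\{ e^2 m M,\ \log(C\Norm{f}{\Ell_\infty(\ball_m^2)}/\eta) \}$ so that $\Norm{\Matrix{F} - \Matrix{G}_0}{\max} < \eta$; the triangle inequality then gives $\Norm{\Matrix{F} - \Matrix{G}}{\max} < \varepsilon \Norm{f}{\Ell_\infty(\ball_m^2)}$. The main obstacle is the third paragraph---producing the sharp coordinatewise norm bounds and then doing the bookkeeping so that the Taylor tail is absorbed \emph{for free}, which works only because the open ball forces $\mu < 1$ strictly; everything else (Stirling, multi-index identities, relating $\delta$ to $r$) is routine, though a careless $\varepsilon/2 + \varepsilon/2$ split of the error would cost an extra absolute constant and spoil the clean form of $r$.
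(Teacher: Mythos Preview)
Your proof is correct and follows essentially the same two-step strategy as the paper: symmetrically split the Taylor factorization as $\Matrix{A}(i,\bm\gamma)=\partial_{\Point{y}}^{\bm\gamma}f(\Point{x}_i,\Point{0})/\sqrt{\bm\gamma!}$, $\Matrix{B}(j,\bm\gamma)=\Point{y}_j^{\bm\gamma}/\sqrt{\bm\gamma!}$, bound the row norms by the coordinatewise exponential series to get $\Norm{\Matrix{A}}{2,\infty}\Norm{\Matrix{B}}{2,\infty}\le C e^{m(1+M^2)/2}\Norm{f}{\Ell_\infty}$, and compress with \cref{theorem:compression}. The only difference is in how the Taylor tail is absorbed: the paper lets the truncation error $\delta\to 0$ and passes to a limit via compactness of rank-$\le r$ matrices, whereas you exploit the strict inequality $\mu=\max_j\Norm{\Point{y}_j}{2}^2<1$ (from the \emph{open} ball and finitely many samples) to create a fixed slack $\eta>0$ that swallows the tail for a single sufficiently large $\rho$---a neat way to avoid the limiting argument, and incidentally your bound $\Norm{\Matrix{B}(j,:)}{2}^2\le e^{\Norm{\Point{y}_j}{2}^2}$ is sharper than the paper's $e^m$.
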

\begin{proof}
See \Cref{appendix:proof_ut}.
\end{proof}

The authors then discuss the implications of \cref{theorem:ut}:
\begin{gather}
\tag{Claim B}\label{eq:claimb}
\parbox{\dimexpr\linewidth-8em}{%
    ``This result [\cref{theorem:ut}] has ramifications for how to interpret an underlying low rank structure in datasets.~\ldots~Our main theorem shows that low rank structure can persist even without an underlying physical reason. In particular, a dataset from a nice latent variable model has an $\varepsilon$-rank~\ldots~that grows slowly with its dimensions, \emph{no matter how many} [$m$] genres or topics generate the data.'' (emphasis in the original, \cite[pp.145--146]{udell2019big})
} \\
\tag{Claim C}\label{eq:claimc}
\parbox{\dimexpr\linewidth-8em}{%
    ``Our theory shows that a matrix generated from a nice LVM is often well approximated by a matrix of low rank, even if the true latent structure is high dimensional or nonlinear.'' (\cite[p.157]{udell2019big})
}
\end{gather} 
The formulation of \ref{eq:claimb} is open to misinterpretation. If $m$ is assumed to be fixed, \cref{theorem:ut} does indeed provide a bound on the $\varepsilon$-rank that grows as $\mathcal{O}(\log(n))$, with $n = n_1 = n_2$ for simplicity. In this case, the analytical rank bounds \cref{eq:analytical_bound,lemma:ut} guarantee an even stronger property: the $\varepsilon$-rank behaves like $\mathcal{O}(1)$ as $n \to \infty$. However, \ref{eq:claimb} is false as a statement that holds uniformly with respect to varying $m$, due to the $m^m$-dependence of the rank bound in \cref{theorem:ut}. Similarly, \ref{eq:claimc} is correct in the sense that \cref{theorem:ut} provides a bound on the $\varepsilon$-rank that is smaller than $n$ when $n$ is sufficiently large, but the same conclusion follows directly from the analytical rank bounds \cref{eq:analytical_bound,lemma:ut}.

The results of the analytical-algebraic approach of \cite{udell2019big} are inferior to the purely analytical rank bound \cref{eq:analytical_bound} when $n$ is large, but could be superior for moderate $n$. The inadequacy of the analytical rank bound is highlighted multiple times in \cite{udell2019big}, e.g.,
\begin{gather*}
\parbox{\dimexpr\linewidth-2em}{%
    ``\ldots the vectors $\Matrix{u}_i$ and $\Matrix{v}_j$ [in \cref{lemma:ut}] may have an extremely large number of entries when the dimension $m$ \ldots~is large: this bound \ldots~grows as $m^m$.'' (\cite[p.152]{udell2019big})
}
\end{gather*}
On \cite[p.157]{udell2019big}, the bound of \cref{lemma:ut} is presented as $\epsrank{\Matrix{F}} = \mathcal{O}(m^m \log(1/\varepsilon))$ in contrast to $\epsrank{\Matrix{F}} = \mathcal{O}(\log(n) / \varepsilon^2)$ of \cref{theorem:ut}, where $m$ is omitted. The authors conclude in relation to the behavior visualized in \cref{fig:dataset}:
\begin{equation*}
\parbox{\dimexpr\linewidth-2em}{%
    ``Based on these bounds, we should expect that when $m$ is large, then for sufficiently large $n$, $\epsrank{\Matrix{F}}$ grows like $\log(n)$. On the other hand, for small $n$ or $\varepsilon$, we can have $\log(n)/\varepsilon^2 \gtrsim  n$, and hence we may see that $\epsrank{\Matrix{F}}$ grows linearly with $n$.'' (\cite[p.157]{udell2019big})
}
\end{equation*}
This description does not take into account the constant stage in \cref{fig:dataset}, which is inevitable for $n$ large enough. With \ref{eq:claima} disproven, the rank bound of \cref{theorem:ut} needs to be specifically analyzed to determine the conditions under which it is tighter than the analytical bounds (that is, when the logarithmic stage in \cref{fig:dataset} is actually present). Such analysis is absent in \cite{udell2019big}, and we carry it out in \Cref{appendix:worse_bound}. We compare \cref{theorem:ut,theorem:ut_tighter} against the analytical rank bound \cref{eq:analytical_bound} and show that
\begin{itemize}
    \item \cref{theorem:ut} is worse than \cref{eq:analytical_bound} for every $n$ and every $m \geq 2$ when $M < (m^2 - e)/e^3$,
    \item \cref{theorem:ut_tighter} is worse than \cref{eq:analytical_bound} for every $n$ and every $m$ when $M > 1.597$.
\end{itemize}
When $1.597 < M < (m^2 - e)/e^3$, the analytical-algebraic approach of \cite{udell2019big} does not guarantee the existence of the logarithmic stage in \cref{fig:dataset}: it leads to rank bounds that are looser than the analytical rank bound \cref{eq:analytical_bound} for all $n$.

\begin{remark}
The analysis in \Cref{appendix:worse_bound} uses the lower bound $C \geq 1$, which holds for every function from \Cref{example:multi_dimensions}. The Gaussian kernel $f(\Point{x},\Point{y}) = \exp(-\Norm{\Point{x}-\Point{y}}{2}^2)$ has $C = m 4^m$ and $M = 4$ according to \cite[p.151]{udell2019big}. For this $C$, the analytical rank bound \cref{eq:analytical_bound} is tighter for all $n$ when $0 < M < (64m^2 - e)/e^3$. The value $M = 4$ lies in this interval for all $m \geq 2$.
\end{remark}
\section{Functions of the inner product}
\label{sec:prod}
The theory developed in \cite{udell2019big} does not guarantee the logarithmic growth of $\varepsilon$-ranks for the Gaussian kernel. Nevertheless, the numerical experiment of \cite{udell2019big} shows that matrices generated with the Gaussian kernel for $m = 1000$ can be approximated with slowly growing ranks. In this section, we provide a theoretical explanation of the observed behavior.

In this numerical experiment, the points $\{ \Point{x}_i \}_{i = 1}^{n_1}$ and $\{ \Point{y}_j \}_{j = 1}^{n_2}$ are drawn independently at random from the uniform distribution on the $m$-dimensional unit sphere. As a consequence, the Gaussian kernel can be rewritten as $f(\Point{x},\Point{y}) = \exp(-2 + 2 \Point{x}^\trans \Point{y})$---this is a function of the inner product. Let us focus on this property and consider functions $f(\Point{x},\Point{y}) = h(\Point{x}^\trans \Point{y})$, where $h$ depends on a single scalar variable. It follows that we can represent matrices $\Matrix{F}$ generated with the function $f$ as $\Matrix{F} = h(\Matrix{X} \Matrix{Y}^\trans)$, where $h$ is applied to $\Matrix{X} \Matrix{Y}^\trans$ entrywise. Here, the rows of $\Matrix{X} \in \Real^{n_1 \times m}$ and $\Matrix{Y} \in \Real^{n_2 \times m}$ are the $m$-dimensional sampling points:
\begin{equation*}
    \Matrix{X}^\trans = 
    \begin{bmatrix}
        \Point{x}_1 & \cdots & \Point{x}_{n_1}    
    \end{bmatrix}, \quad 
    \Matrix{Y}^\trans = 
    \begin{bmatrix}
        \Point{y}_1 & \cdots & \Point{y}_{n_2}    
    \end{bmatrix}.
\end{equation*}

We begin the analysis by extending \cref{theorem:compression} to the compression of the Hadamard product (i.e., entrywise product) of several factorized matrices.

\begin{lemma}
\label{lemma:compression_hadamard}
Let $\varepsilon \in (0,1)$, $n_1, n_2 \in \N$, and $r \in \N$ be given by \cref{eq:compression_rank}. For every $t \in \N$, any $m_1, \ldots, m_t \in \N$, and every collection of matrices $\{ \Matrix{A}_s \}_{s = 1}^{t}$ and $\{ \Matrix{B}_s \}_{s = 1}^{t}$ of sizes $\Matrix{A}_s \in \Real^{n_1 \times m_s}$ and $\Matrix{B}_s \in \Real^{n_2 \times m_s}$, there exists $\Matrix{G} \in \Real^{n_1 \times n_2}$ of $\rank{\Matrix{G}} \leq r$ such that
\begin{equation*}
    \Norm{(\Matrix{A}_1 \Matrix{B}_1^\trans) \hap \cdots \hap (\Matrix{A}_t \Matrix{B}_t^\trans) - \Matrix{G}}{\max} \leq \varepsilon \prod_{s = 1}^{t} \Norm{\Matrix{A}_s}{2,\infty} \Norm{\Matrix{B}_s}{2,\infty}.
\end{equation*}
\end{lemma}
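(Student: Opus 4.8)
The plan is to reduce the claim to \cref{theorem:compression} by observing that a Hadamard product of factorised matrices is again a factorised matrix, the rows of whose factors are Kronecker products of the rows of the original factors. Concretely, for $i \in [n_1]$ and $j \in [n_2]$ the $(i,j)$ entry of the left-hand side equals $\prod_{s=1}^{t} (\Matrix{A}_s \Matrix{B}_s^\trans)(i,j)$, and since $(\Matrix{A}_s \Matrix{B}_s^\trans)(i,j) = \dotp{\Matrix{A}_s(i,:)}{\Matrix{B}_s(j,:)}$ and inner products of Kronecker products of vectors factorise,
\begin{equation*}
    \prod_{s=1}^{t} \dotp{\Matrix{A}_s(i,:)}{\Matrix{B}_s(j,:)} = \dotp{\Matrix{A}_1(i,:) \krp \cdots \krp \Matrix{A}_t(i,:)}{\Matrix{B}_1(j,:) \krp \cdots \krp \Matrix{B}_t(j,:)}.
\end{equation*}
Hence, writing $M = m_1 \cdots m_t$ and defining $\Matrix{A} \in \Real^{n_1 \times M}$, $\Matrix{B} \in \Real^{n_2 \times M}$ row-wise by $\Matrix{A}(i,:) = \Matrix{A}_1(i,:) \krp \cdots \krp \Matrix{A}_t(i,:)$ and $\Matrix{B}(j,:) = \Matrix{B}_1(j,:) \krp \cdots \krp \Matrix{B}_t(j,:)$, we obtain $(\Matrix{A}_1 \Matrix{B}_1^\trans) \hap \cdots \hap (\Matrix{A}_t \Matrix{B}_t^\trans) = \Matrix{A} \Matrix{B}^\trans$.

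Next I would bound the $\ell_2 \to \ell_\infty$ norms of these combined factors. Using the multiplicativity $\Norm{\Matrix{u}_1 \krp \cdots \krp \Matrix{u}_t}{2} = \prod_{s=1}^{t} \Norm{\Matrix{u}_s}{2}$ for vectors, the $i$-th row of $\Matrix{A}$ has Euclidean norm $\prod_{s=1}^{t} \Norm{\Matrix{A}_s(i,:)}{2} \leq \prod_{s=1}^{t} \Norm{\Matrix{A}_s}{2,\infty}$; taking the maximum over $i$ gives $\Norm{\Matrix{A}}{2,\infty} \leq \prod_{s=1}^{t} \Norm{\Matrix{A}_s}{2,\infty}$, and likewise $\Norm{\Matrix{B}}{2,\infty} \leq \prod_{s=1}^{t} \Norm{\Matrix{B}_s}{2,\infty}$.

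Finally I would apply \cref{theorem:compression} to the pair $(\Matrix{A}, \Matrix{B})$: with the very same $r$ from \cref{eq:compression_rank}---which is the crucial point, since $r$ is insensitive to the inner dimension $M$ regardless of how large the blow-up $M = \prod_s m_s$ is---there exists $\Matrix{G}$ with $\rank{\Matrix{G}} \leq r$ and $\Norm{\Matrix{A} \Matrix{B}^\trans - \Matrix{G}}{\max} \leq \varepsilon \Norm{\Matrix{A}}{2,\infty} \Norm{\Matrix{B}}{2,\infty}$, and substituting the norm bounds from the previous paragraph yields the stated estimate. I do not expect a genuine obstacle here: the argument is essentially a two-line reduction, and the only point requiring (minor) care is the bookkeeping of the Kronecker structure---verifying that the rows of the combined factors are exactly the Kronecker products of the original rows and that the Euclidean norm is multiplicative across this product---after which \cref{theorem:compression} does all the work.
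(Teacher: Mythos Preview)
Your proof is correct and is essentially the same as the paper's: the paper phrases the factorisation via the Khatri--Rao product, writing $\tilde{\Matrix{A}}^\trans = \Matrix{A}_1^\trans \khrap \cdots \khrap \Matrix{A}_t^\trans$ and $\tilde{\Matrix{B}}^\trans = \Matrix{B}_1^\trans \khrap \cdots \khrap \Matrix{B}_t^\trans$, which is exactly your row-wise Kronecker construction in different notation, and then applies \cref{theorem:compression} together with the multiplicativity of the $\ell_2$ norm under Kronecker products, just as you do.
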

\begin{proof}
By the properties of the Hadamard product \cite[\S 2.6]{kolda2009tensor}, we can write
\begin{equation*}
    (\Matrix{A}_1 \Matrix{B}_1^\trans) \hap \cdots \hap (\Matrix{A}_t \Matrix{B}_t^\trans) = \Tilde{\Matrix{A}} \Tilde{\Matrix{B}}^\trans,
\end{equation*}
where $\Tilde{\Matrix{A}} \in \Real^{n_1 \times (m_1 \ldots m_t)}$ and $\Tilde{\Matrix{B}} \in \Real^{n_2 \times (m_1 \ldots m_t)}$ are defined via the Khatri--Rao product \cref{eq:khrap}:
\begin{equation*}
    \Tilde{\Matrix{A}}^\trans = \Matrix{A}_1^\trans \khrap \cdots \khrap \Matrix{A}_t^\trans, \quad \Tilde{\Matrix{B}}^\trans = \Matrix{B}_1^\trans \khrap \cdots \khrap \Matrix{B}_t^\trans.
\end{equation*}
Applying \cref{theorem:compression}, we obtain the following error bound of rank-$r$ approximation:
\begin{equation*}
    \Norm{\Tilde{\Matrix{A}} \Tilde{\Matrix{B}}^\trans - \Matrix{G}}{\max} \leq \varepsilon \Norm{\Tilde{\Matrix{A}}}{2,\infty} \Norm{\Tilde{\Matrix{B}}}{2,\infty}.
\end{equation*}
The rows of $\Tilde{\Matrix{A}}$ and $\Tilde{\Matrix{B}}$ are Kronecker products of the corresponding rows of the initial matrices,
\begin{equation*}
    \Tilde{\Matrix{A}}(i,:) = \Matrix{A}_1(i,:) \krp \cdots \krp \Matrix{A}_t(i,:), \quad \Tilde{\Matrix{B}}(j,:) = \Matrix{B}_1(j,:) \krp \cdots \krp \Matrix{B}_t(j,:),
\end{equation*}
so that $\Norm{\Tilde{\Matrix{A}}(i,:)}{2} = \prod_{s = 1}^{t} \Norm{\Matrix{A}_s(i,:)}{2}$ and $\Norm{\Tilde{\Matrix{B}}(j,:)}{2} = \prod_{s = 1}^{t} \Norm{\Matrix{B}_s(j,:)}{2}$.
\end{proof}

In the following lemma, we show how the particular structure of the generating function $f(\Point{x},\Point{y}) = h(\Point{x}^\trans \Point{y})$ paves the way for the low-rank approximation of the corresponding function-generated matrices with rank that is independent of the dimension $m$.

\begin{assumption}
\label{assumption:1d_smooth}
$h : \Omega \to \Real$ is in $C^{\infty}(\Omega)$ for a sufficiently large interval $\Omega \subset \Real$.
\end{assumption}

\begin{lemma}
\label{lemma:1d_function_dotp}
Let $\varepsilon \in (0,1)$, $n_1, n_2 \in \N$, and $r \in \N$ be given by \cref{eq:compression_rank}. Let $h : \Omega \to \Real$ satisfy \cref{assumption:1d_smooth} with $0 \in \Omega$. For any $t,m \in \N$ and every pair of $\Matrix{A} \in \Real^{n_1 \times m}$ and $\Matrix{B} \in \Real^{n_2 \times m}$ such that $\mathcal{I} = [\min\{\Matrix{A} \Matrix{B}^\trans\}, \max\{\Matrix{A} \Matrix{B}^\trans\}] \subseteq \Omega$, there exists $\Matrix{G} \in \Real^{n_1 \times n_2}$ of $\rank{\Matrix{G}} \leq 1 + (t-1)r$ such that
\begin{equation*}
    \Norm{h(\Matrix{A} \Matrix{B}^\trans) - \Matrix{G}}{\max} \leq \frac{\Norm{\Matrix{A} \Matrix{B}^\trans}{\max}^t}{t!} \Norm{h^{(t)}}{\Ell_\infty(\Omega)} + \varepsilon \sum_{s = 1}^{t-1} \frac{|h^{(s)}(0)|}{s!} \Norm{\Matrix{A}}{2,\infty}^s \Norm{\Matrix{B}}{2,\infty}^s.
\end{equation*}
\end{lemma}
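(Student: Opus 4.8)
The plan is to expand $h$ in a truncated Taylor series around $0$ and apply the Hadamard-product compression lemma to the polynomial part. First I would write, for each $s \in \{0, 1, \ldots, t-1\}$, the monomial term $\frac{h^{(s)}(0)}{s!} (\Matrix{A}\Matrix{B}^\trans)^{\hap s}$, recognizing that $(\Matrix{A}\Matrix{B}^\trans)^{\hap s} = (\Matrix{A}\Matrix{B}^\trans) \hap \cdots \hap (\Matrix{A}\Matrix{B}^\trans)$ is exactly the Hadamard product of $s$ copies of the same factorized matrix. By Taylor's theorem with Lagrange (or integral) remainder applied entrywise on the interval $\mathcal{I} \subseteq \Omega$, we have
\begin{equation*}
    h(\Matrix{A}\Matrix{B}^\trans) = \sum_{s=0}^{t-1} \frac{h^{(s)}(0)}{s!} (\Matrix{A}\Matrix{B}^\trans)^{\hap s} + \Matrix{E}_t, \quad \Norm{\Matrix{E}_t}{\max} \leq \frac{\Norm{\Matrix{A}\Matrix{B}^\trans}{\max}^t}{t!} \Norm{h^{(t)}}{\Ell_\infty(\Omega)},
\end{equation*}
since each entry of $\Matrix{A}\Matrix{B}^\trans$ lies in $\mathcal{I}$ and the remainder of the scalar Taylor expansion at a point $\xi \in \mathcal{I}$ is bounded by $\frac{|\xi|^t}{t!} \Norm{h^{(t)}}{\Ell_\infty(\Omega)} \leq \frac{\Norm{\Matrix{A}\Matrix{B}^\trans}{\max}^t}{t!}\Norm{h^{(t)}}{\Ell_\infty(\Omega)}$.

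Next I would handle the polynomial part term by term. The $s = 0$ term is the constant matrix $h(0) \Matrix{1}\Matrix{1}^\trans$, which has rank at most $1$ and needs no compression. For each $s \in \{1, \ldots, t-1\}$, I apply \cref{lemma:compression_hadamard} with $t$ replaced by $s$ and all factor matrices equal to $\Matrix{A}$ and $\Matrix{B}$ respectively (so $m_1 = \cdots = m_s = m$): this yields $\Matrix{G}_s$ of $\rank{\Matrix{G}_s} \leq r$ with
\begin{equation*}
    \Norm{(\Matrix{A}\Matrix{B}^\trans)^{\hap s} - \Matrix{G}_s}{\max} \leq \varepsilon \Norm{\Matrix{A}}{2,\infty}^s \Norm{\Matrix{B}}{2,\infty}^s.
\end{equation*}
Setting $\Matrix{G} = h(0)\Matrix{1}\Matrix{1}^\trans + \sum_{s=1}^{t-1} \frac{h^{(s)}(0)}{s!} \Matrix{G}_s$, the rank is at most $1 + (t-1)r$ by subadditivity of rank. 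The error then splits, via the triangle inequality, into the Taylor remainder $\Matrix{E}_t$ and the sum of the compression errors $\sum_{s=1}^{t-1} \frac{|h^{(s)}(0)|}{s!} \Norm{(\Matrix{A}\Matrix{B}^\trans)^{\hap s} - \Matrix{G}_s}{\max}$, which gives exactly the claimed bound.

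One subtlety worth checking is that \cref{lemma:compression_hadamard} is applicable to the degenerate case $s = 1$, where it should reduce to \cref{theorem:compression} directly; this is immediate from the statement. The only real point of care — which I would not call a genuine obstacle — is making sure the rank-$r$ compressed matrices $\Matrix{G}_s$ can be chosen to use the \emph{same} $r$ for every $s$: since $r$ in \cref{eq:compression_rank} depends only on $\varepsilon, n_1, n_2$ and not on the inner dimension, \cref{lemma:compression_hadamard} does indeed furnish rank-$r$ approximants uniformly across $s$, so the count $1 + (t-1)r$ is correct. Everything else is a routine assembly of the triangle inequality and the scalar Taylor estimate.
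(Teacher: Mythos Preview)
Your proposal is correct and follows essentially the same route as the paper: truncate the Taylor series of $h$ at zero, bound the remainder entrywise, compress each Hadamard power $(\Matrix{A}\Matrix{B}^\trans)^{\hap s}$ for $s \in [t-1]$ via \cref{lemma:compression_hadamard}, keep the constant $s=0$ term as a rank-one matrix, and sum ranks. The paper additionally remarks that one must use $\Norm{h^{(t)}}{\Ell_\infty(\Omega)}$ rather than $\Norm{h^{(t)}}{\Ell_\infty(\mathcal{I})}$ because the Lagrange point lies between $0$ and the entry (and $0$ need not belong to $\mathcal{I}$), a point you handle correctly but do not flag explicitly.
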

\begin{proof}
The truncated Taylor series $h_t(x) = \sum_{s = 0}^{t-1} \frac{h^{(s)}(0)}{s!} x^s$ approximates $h$ as
\begin{equation*}
    \Norm{h - h_t}{\Ell_\infty(\mathcal{I})} \leq \frac{\Norm{\Matrix{A} \Matrix{B}^\trans}{\max}^t}{t!} \Norm{h^{(t)}}{\Ell_\infty(\Omega)}.
\end{equation*}
Note that we cannot use $\Norm{h^{(t)}}{\Ell_\infty(\mathcal{I})}$ in the right-hand side since the origin might not belong to the interval $\mathcal{I}$. Next, we evaluate $h_t(\Matrix{A} \Matrix{B}^\trans) = \sum_{s = 0}^{t-1} \frac{h^{(s)}(0)}{s!} (\Matrix{A} \Matrix{B}^\trans)^{\hap s}$. For every $s \in [t-1]$, we can apply \cref{lemma:compression_hadamard} to $(\Matrix{A} \Matrix{B}^\trans)^{\hap s}$ to find its approximation of rank at most $r$. For $s = 0$, the corresponding term is a matrix of $h(0)$, hence its rank is at most one. Finally, recall that the rank of a sum of two matrices is bounded by the sum of their ranks \cite[\S0.4]{horn2012matrix}.
\end{proof}

\begin{remark}
We can relax \cref{assumption:1d_smooth}, requiring $h \in C^q(\Omega)$ and taking $t \leq q$.
\end{remark}

The error bound in \cref{lemma:1d_function_dotp} consists of two terms. The first is ``analytical'': it is responsible for discretizing the generating function and setting the baseline level of the achievable approximation error. The second is ``algebraic'': it adds up the contributions from the approximation of the selected monomials and controls the target approximation error. Next, we request that the derivatives of $h$ satisfy the bound of \cref{example:1d} to achieve the exponential decay of the baseline error.

\begin{assumption}
\label{assumption:1d_analytic}
$h : \Omega \to \Real$ satisfies \cref{assumption:1d_smooth}, is bounded in $\Omega$, and its derivatives grow as $\Norm{h^{(s)}}{\Ell_\infty(\Omega)} \leq C M^s$ for $s \in \N_0$ with constants $C = C_{\Omega, h} \geq 0$ and $M = M_{\Omega, h} > 0$.
\end{assumption}

\begin{remark}
Compared to \cref{example:1d,example:multi_dimensions}, we absorb the norm $\Norm{h}{\Ell_\infty(\Omega)}$ into $C$. Note also that, unlike in \cref{example:multi_dimensions}, the constants $C$ and $M$ do not depend on $m$.
\end{remark}

\begin{corollary}
\label{corollary:1d_function_dotp}
In the setting of \cref{lemma:1d_function_dotp}, let $h : \Omega \to \Real$ satisfy \cref{assumption:1d_analytic}. If $\max\{\Norm{\Matrix{A}}{2,\infty}, \Norm{\Matrix{B}}{2,\infty}\} \leq R$ then, for $t \geq e^2 M R^2$, 
\begin{equation*}
    \Norm{h(\Matrix{A} \Matrix{B}^\trans) - \Matrix{G}}{\max} \leq C \Big[ e^{-t} + \varepsilon (e^{M R^2} - 1) \Big].
\end{equation*}
\end{corollary}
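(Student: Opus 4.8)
The plan is to substitute the hypotheses of \cref{assumption:1d_analytic} directly into the error bound of \cref{lemma:1d_function_dotp} and estimate its two terms separately. The first preparatory step is to control the magnitude of $\Matrix{A}\Matrix{B}^\trans$: its $(i,j)$ entry equals $\dotp{\Matrix{A}(i,:)}{\Matrix{B}(j,:)}$, so by Cauchy--Schwarz $|\Matrix{A}(i,:)\Matrix{B}(j,:)^\trans| \leq \Norm{\Matrix{A}(i,:)}{2}\Norm{\Matrix{B}(j,:)}{2} \leq \Norm{\Matrix{A}}{2,\infty}\Norm{\Matrix{B}}{2,\infty} \leq R^2$, whence $\Norm{\Matrix{A}\Matrix{B}^\trans}{\max} \leq R^2$. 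This also shows $\mathcal{I} \subseteq [-R^2, R^2]$, which is consistent with the standing assumption $\mathcal{I} \subseteq \Omega$ inherited from \cref{lemma:1d_function_dotp}.

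For the analytical term, \cref{assumption:1d_analytic} gives $\Norm{h^{(t)}}{\Ell_\infty(\Omega)} \leq C M^t$, so together with the bound on $\Norm{\Matrix{A}\Matrix{B}^\trans}{\max}$ one obtains
\[
    \frac{\Norm{\Matrix{A}\Matrix{B}^\trans}{\max}^t}{t!}\Norm{h^{(t)}}{\Ell_\infty(\Omega)} \leq C\,\frac{(M R^2)^t}{t!}.
\]
Stirling's inequality $t! > (t/e)^t$ (used exactly as in \cref{example:1d}) then yields $(M R^2)^t / t! < (e M R^2 / t)^t$, and the hypothesis $t \geq e^2 M R^2$ forces the base $e M R^2 / t \leq e^{-1}$, so the analytical term is at most $C e^{-t}$.

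For the algebraic term, $|h^{(s)}(0)| \leq C M^s$ and $\Norm{\Matrix{A}}{2,\infty}^s\Norm{\Matrix{B}}{2,\infty}^s \leq R^{2s}$ turn the finite sum into $C\varepsilon \sum_{s=1}^{t-1}(M R^2)^s / s!$, which is bounded from above by the tail of the exponential series $C\varepsilon \sum_{s=1}^{\infty}(M R^2)^s / s! = C\varepsilon(e^{M R^2} - 1)$. Adding the two estimates gives $\Norm{h(\Matrix{A}\Matrix{B}^\trans) - \Matrix{G}}{\max} \leq C e^{-t} + C\varepsilon(e^{M R^2} - 1)$, which is the claim. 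There is no real obstacle here: the corollary is a direct specialization of \cref{lemma:1d_function_dotp}, and the only points to monitor are the Stirling estimate and the identification of the partial exponential sum; the degenerate case $t = 1$, where the algebraic sum is empty, is already covered by the stated inequality since its right-hand side is then $\geq C e^{-1}$.
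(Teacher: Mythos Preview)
Your proof is correct and follows exactly the approach sketched in the paper: bound $\Norm{\Matrix{A}\Matrix{B}^\trans}{\max}$ by $R^2$, apply Stirling's inequality to the analytical term, and dominate the finite algebraic sum by the exponential series. The paper's own proof is the same argument stated in one sentence.
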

\begin{proof}
Use Stirling's formula (as in \cref{example:1d}) in the error bound of \cref{lemma:1d_function_dotp}, estimate the sum over $s$ with the infinite series, and note that $\Norm{\Matrix{A} \Matrix{B}^\trans}{\max} \leq R^2$.
\end{proof}

\begin{theorem}
\label{theorem:1d_function_dotp_gen}
Let $\varepsilon \in (0,1)$, $n_1, n_2 \in \N$, and $r \in \N$ be given by \cref{eq:compression_rank}. Let $h : \Omega \to \Real$ satisfy \cref{assumption:1d_analytic} with $[-\sigma R^2, \sigma R^2] \subseteq \Omega$ and $\sigma, R > 0$. Let $m \in \N$ and $\Matrix{W} \in \Real^{m \times m}$ be symmetric positive-definite with $\Norm{\Matrix{W}}{2} \leq \sigma$. For every $\Matrix{F} \in \Real^{n_1 \times n_2}$ generated with $f(\Point{x},\Point{y}) = h(\Point{x}^\trans \Matrix{W} \Point{y})$ based on points from $\overline{\ball}_{m,R}$ and every integer $t \geq e^2 M \Norm{\Matrix{W}}{2} R^2$, there exists $\Matrix{G} \in \Real^{n_1 \times n_2}$ of $\rank{\Matrix{G}} \leq 1 + (t-1)r$ such that
\begin{equation*}
    \Norm{\Matrix{F} - \Matrix{G}}{\max} \leq C \Big[ e^{-t} + \varepsilon (e^{M \Norm{\Matrix{W}}{2} R^2} - 1) \Big].
\end{equation*}
\end{theorem}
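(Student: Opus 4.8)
The plan is to reduce the statement to \cref{corollary:1d_function_dotp} by absorbing the weight matrix $\Matrix{W}$ into the sampling points through its symmetric positive-definite square root. First I would collect the sampling points into matrices $\Matrix{X} \in \Real^{n_1 \times m}$ and $\Matrix{Y} \in \Real^{n_2 \times m}$ whose rows are $\Point{x}_i^\trans$ and $\Point{y}_j^\trans$, so that $\Matrix{F} = h(\Matrix{X} \Matrix{W} \Matrix{Y}^\trans)$ with $h$ applied entrywise. Since $\Matrix{W}$ is symmetric positive-definite, it admits a symmetric positive-definite square root $\Matrix{W}^{1/2}$ with $\Norm{\Matrix{W}^{1/2}}{2} = \Norm{\Matrix{W}}{2}^{1/2}$, and I can factor $\Matrix{X} \Matrix{W} \Matrix{Y}^\trans = \Matrix{A} \Matrix{B}^\trans$ with $\Matrix{A} = \Matrix{X} \Matrix{W}^{1/2} \in \Real^{n_1 \times m}$ and $\Matrix{B} = \Matrix{Y} \Matrix{W}^{1/2} \in \Real^{n_2 \times m}$.

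The next step is to verify the hypotheses of \cref{corollary:1d_function_dotp} for these $\Matrix{A}, \Matrix{B}$ with the radius $\widetilde{R} = R \Norm{\Matrix{W}}{2}^{1/2}$. The $i$-th row of $\Matrix{A}$ equals $\Point{x}_i^\trans \Matrix{W}^{1/2}$, hence $\Norm{\Matrix{A}(i,:)}{2} = \Norm{\Matrix{W}^{1/2} \Point{x}_i}{2} \leq \Norm{\Matrix{W}}{2}^{1/2} \Norm{\Point{x}_i}{2} \leq \widetilde{R}$, and likewise for $\Matrix{B}$, so $\max\{\Norm{\Matrix{A}}{2,\infty}, \Norm{\Matrix{B}}{2,\infty}\} \leq \widetilde{R}$. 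Moreover, every entry of $\Matrix{A} \Matrix{B}^\trans$ is $\Point{x}_i^\trans \Matrix{W} \Point{y}_j$, bounded in absolute value by $\Norm{\Matrix{W}}{2} \Norm{\Point{x}_i}{2} \Norm{\Point{y}_j}{2} \leq \Norm{\Matrix{W}}{2} R^2 = \widetilde{R}^2 \leq \sigma R^2$, so $\mathcal{I} = [\min\{\Matrix{A} \Matrix{B}^\trans\}, \max\{\Matrix{A} \Matrix{B}^\trans\}] \subseteq [-\sigma R^2, \sigma R^2] \subseteq \Omega$ and in particular $0 \in \Omega$.

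Finally, since $t \geq e^2 M \Norm{\Matrix{W}}{2} R^2 = e^2 M \widetilde{R}^2$, \cref{corollary:1d_function_dotp} applied to $\Matrix{A}, \Matrix{B}$ with radius $\widetilde{R}$ yields a matrix $\Matrix{G} \in \Real^{n_1 \times n_2}$ of $\rank{\Matrix{G}} \leq 1 + (t-1)r$ with $\Norm{h(\Matrix{A} \Matrix{B}^\trans) - \Matrix{G}}{\max} \leq C [ e^{-t} + \varepsilon (e^{M \widetilde{R}^2} - 1) ]$, which is exactly the asserted bound because $h(\Matrix{A} \Matrix{B}^\trans) = \Matrix{F}$ and $\widetilde{R}^2 = \Norm{\Matrix{W}}{2} R^2$. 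I do not expect a genuine obstacle here: the only points requiring care are the bookkeeping around the square-root factorization, so that the $\ell_2 \to \ell_\infty$ norms of the factors are controlled by $R \Norm{\Matrix{W}}{2}^{1/2}$ rather than by a cruder bound, and the observation that $\mathcal{I}$ need not contain the origin — which is precisely why the hypothesis is stated in terms of the symmetric interval $[-\sigma R^2, \sigma R^2]$ rather than $\mathcal{I}$ itself, and why the Taylor remainder in \cref{lemma:1d_function_dotp} is measured with $\Norm{h^{(t)}}{\Ell_\infty(\Omega)}$.
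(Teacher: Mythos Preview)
Your proof is correct and essentially identical to the paper's: both absorb $\Matrix{W}$ into the sampling matrices via a square-root factorization (the paper uses the eigendecomposition $\Matrix{W} = \Matrix{U}\Matrix{\Lambda}\Matrix{U}^\trans$ and sets $\Matrix{A} = \Matrix{X}\Matrix{U}\Matrix{\Lambda}^{1/2}$, $\Matrix{B} = \Matrix{Y}\Matrix{U}\Matrix{\Lambda}^{1/2}$, which differs from your $\Matrix{X}\Matrix{W}^{1/2}$, $\Matrix{Y}\Matrix{W}^{1/2}$ only by a right orthogonal factor) and then invoke \cref{corollary:1d_function_dotp} with radius $\sqrt{\Norm{\Matrix{W}}{2}}\,R$. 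Your verification that $\mathcal{I} \subseteq [-\sigma R^2,\sigma R^2] \subseteq \Omega$ is a detail the paper leaves implicit.
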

\begin{proof}
Let the rows of $\Matrix{X} \in \Real^{n_1 \times m}$ and $\Matrix{Y} \in \Real^{n_2 \times m}$ consist of the points used to generate $\Matrix{F}$ so that $\Matrix{F} = h(\Matrix{X} \Matrix{W} \Matrix{Y}^\trans)$. Let $\Matrix{W} = \Matrix{U} \Matrix{\Lambda} \Matrix{U}^\trans$ be the eigenvalue decomposition of $\Matrix{W}$ with orthogonal $\Matrix{U}$ and diagonal positive-definite $\Matrix{\Lambda}$. Note that $\Matrix{X} \Matrix{W} \Matrix{Y}^\trans = (\Matrix{X}\Matrix{U} \Matrix{\Lambda}^{\frac{1}{2}}) (\Matrix{Y}\Matrix{U} \Matrix{\Lambda}^{\frac{1}{2}})^\trans$ and
\begin{equation*}
    \Norm{\Matrix{X}\Matrix{U} \Matrix{\Lambda}^{\frac{1}{2}}}{2,\infty} \leq \Norm{\Matrix{\Lambda}^{\frac{1}{2}}}{2} \Norm{\Matrix{X}}{2,\infty} \leq \sqrt{\Norm{\Matrix{W}}{2}} R, \quad     \Norm{\Matrix{Y}\Matrix{U} \Matrix{\Lambda}^{\frac{1}{2}}}{2,\infty} \leq \Norm{\Matrix{\Lambda}^{\frac{1}{2}}}{2} \Norm{\Matrix{Y}}{2,\infty} \leq \sqrt{\Norm{\Matrix{W}}{2}} R.
\end{equation*}
It remains to apply \cref{corollary:1d_function_dotp} with $\Matrix{A} = \Matrix{X}\Matrix{U} \Matrix{\Lambda}^{\frac{1}{2}}$ and $\Matrix{B} = \Matrix{Y}\Matrix{U} \Matrix{\Lambda}^{\frac{1}{2}}$.
\end{proof}

\begin{remark}
The statement holds for general non-symmetric rectangular matrices $\Matrix{W}$. In the proof, the eigenvalue decomposition needs to be replaced with the singular value decomposition. And, of course, the corresponding bilinear form will no longer be an inner product.
\end{remark}

For sufficiently small $\varepsilon$, the choice $t = \lceil \log(\varepsilon^{-1}) \rceil$ guarantees that there is a matrix $\Matrix{G}$ of $\rank{\Matrix{G}} = \mathcal{O}(\log(n_1 n_2) \varepsilon^{-2} \log(\varepsilon^{-1}))$ such that $\Norm{\Matrix{F} - \Matrix{G}}{\max} = \mathcal{O}(\varepsilon)$. As we noted before, neither of the constants in the rank bound and the error bound depends on $m$.

\begin{remark}
\label{remark:gaussian}
Let us return to the Gaussian kernel $f(\Point{x},\Point{y}) = \exp(-\Norm{\Point{x}-\Point{y}}{2}^2)$ on the unit sphere. It corresponds to $h(x) = \exp(2x - 2)$ with $x \in [-1,1]$. This $h$ satisfies \Cref{assumption:1d_analytic} with $C = 1$ and $M = 2$. By \Cref{theorem:1d_function_dotp_gen}, every $\Matrix{F} \in \Real^{n_1 \times n_2}$ generated with $f$ can be approximated as $\Norm{\Matrix{F} - \Matrix{G}}{\max} \leq e^2\varepsilon$ with $\rank{\Matrix{G}} \leq \lceil 9 \log(3 n_1 n_2) \varepsilon^{-2} \log(\varepsilon^{-1}) \rceil$. The conditions of \Cref{theorem:1d_function_dotp_gen} require that $\lceil \log(\varepsilon^{-1}) \rceil \geq 2 e^2$, which holds for $\varepsilon \leq \exp(-2 e^2) < 3.82 \times 10^{-7}$.
\end{remark}
\section{Functions of the squared Euclidean distance}
\label{sec:dist}
When restricted to the unit sphere, the Gaussian kernel $f(\Point{x},\Point{y}) = \exp(-\Norm{\Point{x}-\Point{y}}{2}^2)$ depends only on the inner product $\Point{x}^\trans \Point{y}$. Should the variables belong to the unit ball, $f$ ceases to be a function of the inner product since $f(\Point{x},\Point{y}) = \exp(-\Norm{\Point{x}}{2}^2)\exp(-\Norm{\Point{y}}{2}^2)\exp(2 \Point{x}^\trans \Point{y})$. A matrix generated with a product of functions is the Hadamard product of the corresponding function-generated matrices; in our case, $\Matrix{F} = \Matrix{F}_1 \hap \Matrix{F}_2 \hap \Matrix{F}_3$ with $f_1(\Point{x},\Point{y}) = \exp(-\Norm{\Point{x}}{2}^2)$, $f_2(\Point{x},\Point{y}) = \exp(-\Norm{\Point{y}}{2}^2)$, and $f_3(\Point{x},\Point{y}) = \exp(2 \Point{x}^\trans \Point{y})$. The first two functions depend on a single variable, so $\Matrix{F}_1$ and $\Matrix{F}_2$ are rank-one matrices, and $\Matrix{F}_3$ can be approximated with $\Matrix{G}$ of $\rank{\Matrix{G}} = \mathcal{O}(\log(n_1 n_2) \varepsilon^{-2} \log(\varepsilon^{-1}))$ by \cref{theorem:1d_function_dotp_gen}. By the properties of the Hadamard product \cite[\S 5.1]{horn1994topics},
\begin{gather*}
    \rank{\Matrix{F}_1 \hap \Matrix{F}_2 \hap \Matrix{G}} \leq \rank{\Matrix{F}_1} \rank{\Matrix{F}_2} \rank{\Matrix{G}} = \rank{\Matrix{G}}, \\
    \Norm{\Matrix{F} - \Matrix{F}_1 \hap \Matrix{F}_2 \hap \Matrix{G}}{\max} \leq \Norm{\Matrix{F}_1}{\max} \Norm{\Matrix{F}_2}{\max} \Norm{\Matrix{F}_3 - \Matrix{G}}{\max} = \mathcal{O}(\varepsilon).    
\end{gather*}
For matrices generated with Gaussian kernels, it does not matter whether the variables are sampled from a sphere or from a ball. In proving this, we relied on the properties of the exponential. What happens in the more general case of $f(\Point{x},\Point{y}) = h(\Norm{\Point{x}-\Point{y}}{2}^2)$?

Analytical low-rank approximation of such functions was studied in \cite{wang2018numerical}. As follows from its results, the entrywise error $\Norm{\Matrix{F} - \Matrix{G}}{\max} = \mathcal{O}(\varepsilon)$ can be achieved with a matrix $\Matrix{G}$ of $\rank{\Matrix{G}} \leq \genfrac{(}{)}{0pt}{1}{m + 2 + \rho_\ast}{m + 2}$ with $\rho_\ast = \mathcal{O}(\log(\varepsilon^{-1}))$. This bound qualitatively coincides with \cref{eq:analytical_bound} for fixed $m$ and $\varepsilon \to 0$, and is an improvement for fixed $\varepsilon$ and $m \to \infty$ since $\genfrac{(}{)}{0pt}{1}{m + 2 + \rho_\ast}{m + 2} \sim \frac{(m+2)^{\rho_\ast}}{\rho_\ast!}$.

For fixed $\varepsilon$, the above analytical rank bound grows polynomially with the dimension $m$. In this section, we show how to eliminate the dependence on $m$ whatsoever. We begin with an auxiliary lemma that will allow us to reuse the results from \Cref{sec:prod}.

\begin{lemma}
\label{lemma:prod_plus_factorization}
For any $n_1,n_2,m,p \in \N$, every pair of $\Matrix{A} \in \Real^{n_1 \times m}$ and $\Matrix{B} \in \Real^{n_2 \times m}$, and any rank-one matrices $\Matrix{Z}_1, \ldots, \Matrix{Z}_p \in \Real^{n_1 \times n_2}$, there exist $\Matrix{C} \in \Real^{n_1 \times (m+p)}$ and $\Matrix{D} \in \Real^{n_2 \times (m+p)}$ such that $\Matrix{A} \Matrix{B}^\trans + \sum_{i = 1}^{p} \Matrix{Z}_i = \Matrix{C} \Matrix{D}^\trans$ and
\begin{equation*}
    \Norm{\Matrix{C}}{2,\infty} \leq \bigg( \Norm{\Matrix{A}}{2,\infty}^2 + \sum_{i = 1}^{p} \Norm{\Matrix{Z}_i}{\max} \bigg)^{\frac{1}{2}}, \quad \Norm{\Matrix{D}}{2,\infty} \leq \bigg( \Norm{\Matrix{B}}{2,\infty}^2 + \sum_{i = 1}^{p} \Norm{\Matrix{Z}_i}{\max} \bigg)^{\frac{1}{2}}.
\end{equation*}
\end{lemma}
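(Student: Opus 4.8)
The plan is to build $\Matrix{C}$ and $\Matrix{D}$ by appending $p$ extra columns to $\Matrix{A}$ and $\Matrix{B}$, one column per rank-one summand, after rescaling each outer-product factorization so that its two factors carry the error symmetrically. Concretely, for each $i \in [p]$ write $\Matrix{Z}_i = \Point{a}_i \Point{b}_i^\trans$ with $\Point{a}_i \in \Real^{n_1}$ and $\Point{b}_i \in \Real^{n_2}$ (possible since $\rank{\Matrix{Z}_i} \leq 1$; if $\Matrix{Z}_i = \Matrix{0}$ take $\Point{a}_i = \Point{b}_i = \Point{0}$). I would then set $\Matrix{C} = [\,\Matrix{A} \mid \Point{a}_1 \mid \cdots \mid \Point{a}_p\,] \in \Real^{n_1 \times (m+p)}$ and $\Matrix{D} = [\,\Matrix{B} \mid \Point{b}_1 \mid \cdots \mid \Point{b}_p\,] \in \Real^{n_2 \times (m+p)}$, so that $\Matrix{C} \Matrix{D}^\trans = \Matrix{A} \Matrix{B}^\trans + \sum_{i=1}^{p} \Point{a}_i \Point{b}_i^\trans = \Matrix{A} \Matrix{B}^\trans + \sum_{i=1}^{p} \Matrix{Z}_i$ as required.

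The key observation is that for a rank-one matrix $\Matrix{Z}_i = \Point{a}_i \Point{b}_i^\trans$ one has $\Norm{\Matrix{Z}_i}{\max} = \max_{k,l} |\Point{a}_i(k)\,\Point{b}_i(l)| = \Norm{\Point{a}_i}{\infty}\,\Norm{\Point{b}_i}{\infty}$, and this is invariant under the rescaling $\Point{a}_i \mapsto \lambda_i \Point{a}_i$, $\Point{b}_i \mapsto \lambda_i^{-1} \Point{b}_i$ for $\lambda_i > 0$. Choosing $\lambda_i$ so that (for $\Matrix{Z}_i \neq \Matrix{0}$) $\Norm{\Point{a}_i}{\infty} = \Norm{\Point{b}_i}{\infty} = \Norm{\Matrix{Z}_i}{\max}^{1/2}$ balances the two factors; for $\Matrix{Z}_i = \Matrix{0}$ this identity holds trivially with both sides zero. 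With this normalization in place, the row bookkeeping is immediate: the $k$-th row of $\Matrix{C}$ is the concatenation of $\Matrix{A}(k,:)$ with the scalars $\Point{a}_1(k), \ldots, \Point{a}_p(k)$, so
\begin{equation*}
\Norm{\Matrix{C}(k,:)}{2}^2 = \Norm{\Matrix{A}(k,:)}{2}^2 + \sum_{i=1}^{p} \Point{a}_i(k)^2 \leq \Norm{\Matrix{A}}{2,\infty}^2 + \sum_{i=1}^{p} \Norm{\Point{a}_i}{\infty}^2 = \Norm{\Matrix{A}}{2,\infty}^2 + \sum_{i=1}^{p} \Norm{\Matrix{Z}_i}{\max}.
\end{equation*}
Taking the maximum over $k \in [n_1]$ and the square root gives the bound on $\Norm{\Matrix{C}}{2,\infty}$; the argument for $\Norm{\Matrix{D}}{2,\infty}$ is identical, using $\Norm{\Point{b}_i}{\infty}^2 = \Norm{\Matrix{Z}_i}{\max}$.

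There is no serious obstacle here: the only point requiring a line of care is the rescaling step, specifically the degenerate cases where $\Point{a}_i$ or $\Point{b}_i$ vanishes (forcing $\Matrix{Z}_i = \Matrix{0}$), which are handled separately by the trivial choice above so that the identity $\Norm{\Point{a}_i}{\infty}\Norm{\Point{b}_i}{\infty} = \Norm{\Matrix{Z}_i}{\max}$ is never used in an indeterminate form. Everything else is a direct computation with the $\ell_2 \to \ell_\infty$ operator norm, i.e. the largest Euclidean row norm.
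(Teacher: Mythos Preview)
Your proposal is correct and essentially identical to the paper's proof: the paper also writes $\Matrix{Z}_i = \Matrix{u}_i \Matrix{v}_i^\trans$ normalized so that $\Norm{\Matrix{u}_i}{\infty} = \Norm{\Matrix{v}_i}{\infty} = \sqrt{\Norm{\Matrix{Z}_i}{\max}}$, appends these vectors as extra columns to $\Matrix{A}$ and $\Matrix{B}$, and bounds the row norms in the same way. Your explicit handling of the degenerate case $\Matrix{Z}_i = \Matrix{0}$ is a small refinement the paper leaves implicit.
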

\begin{proof}
We can represent every $\Matrix{Z}_i$ as $\Matrix{Z}_i = \Matrix{u}_i \Matrix{v}_i^\trans$ with $\Matrix{u}_i \in \Real^{n_1}$ and $\Matrix{v}_i \in \Real^{n_2}$ normalized so that $\Norm{\Matrix{u}_i}{\infty} = \Norm{\Matrix{v}_i}{\infty} = \sqrt{\Norm{\Matrix{Z}_i}{\max}}$. Consider $\Matrix{C} \in \Real^{n_1 \times (m+p)}$ and $\Matrix{D} \in \Real^{n_2 \times (m+p)}$ given by
\begin{equation*}
    \Matrix{C} = \begin{bmatrix}
        \Matrix{A} & \Matrix{u}_1 & \cdots & \Matrix{u}_p
    \end{bmatrix}, \quad
    \Matrix{D} = \begin{bmatrix}
        \Matrix{B} & \Matrix{v}_1 & \cdots & \Matrix{v}_p
    \end{bmatrix}
\end{equation*}
such that $\Matrix{A} \Matrix{B}^\trans + \sum_{i = 1}^{p} \Matrix{Z}_i = \Matrix{C} \Matrix{D}^\trans$. Let us estimate $\Norm{\Matrix{C}}{2,\infty}$ (a similar bound holds for $\Norm{\Matrix{D}}{2,\infty}$):
\begin{equation*}
    \Norm{\Matrix{C}}{2,\infty}^2 = \max_j \bigg\{\Norm{\Matrix{A}(j,:)}{2}^2 + \sum_{i = 1}^{p} |\Matrix{u}_i(j)|^2\bigg\} \leq \Norm{\Matrix{A}}{2,\infty}^2 + \sum_{i = 1}^{p} \Norm{\Matrix{u}_i}{\infty}^2.
\end{equation*}
\end{proof}

\cref{corollary:1d_function_dotp} estimates the low-rank approximation error for $h(\Matrix{A} \Matrix{B}^\trans)$ in terms of $\Norm{\Matrix{A}}{2,\infty}$ and $\Norm{\Matrix{B}}{2,\infty}$. If in addition to these two norms we can also control $\Norm{\Matrix{Z}_i}{\max}$, \cref{corollary:1d_function_dotp} can be applied to $h(\Matrix{A} \Matrix{B}^\trans + \sum_{i = 1}^{p} \Matrix{Z}_i)$ with the help of \cref{lemma:prod_plus_factorization}.

\begin{theorem}
\label{theorem:1d_function_dist_gen}
Let $\varepsilon \in (0,1)$, $n_1, n_2 \in \N$, and $r \in \N$ be given by \cref{eq:compression_rank}. Let $h : \Omega \to \Real$ satisfy \cref{assumption:1d_analytic} with $[-4\sigma R^2, 4\sigma R^2] \subseteq \Omega$ and $\sigma, R > 0$. Let $m \in \N$ and $\Matrix{W}~\in~\Real^{m \times m}$ be symmetric positive-definite with $\Norm{\Matrix{W}}{2} \leq \sigma$. For every $\Matrix{F} \in \Real^{n_1 \times n_2}$ generated with $f(\Point{x},\Point{y}) = h(\Norm{\Point{x} - \Point{y}}{\Matrix{W}}^2)$ based on points from $\overline{\ball}_{m,R}$ and every integer $t~\geq~4e^2 M \Norm{\Matrix{W}}{2} R^2$, there exists $\Matrix{G} \in \Real^{n_1 \times n_2}$ of $\rank{\Matrix{G}} \leq 1 + (t-1)r$ such that
\begin{equation*}
    \Norm{\Matrix{F} - \Matrix{G}}{\max} \leq C \Big[ e^{-t} + \varepsilon (e^{4M \Norm{\Matrix{W}}{2} R^2} - 1) \Big].
\end{equation*}
\end{theorem}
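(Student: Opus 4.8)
The plan is to reduce the claim to \cref{corollary:1d_function_dotp} in the same spirit as the proof of \cref{theorem:1d_function_dotp_gen}, by writing the matrix of squared $\Matrix{W}$-distances as an inner-product matrix plus two rank-one corrections, and then folding those corrections into the factors by means of \cref{lemma:prod_plus_factorization}.

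First I would use the polarization identity $\Norm{\Point{x} - \Point{y}}{\Matrix{W}}^2 = \Point{x}^\trans \Matrix{W} \Point{x} - 2 \Point{x}^\trans \Matrix{W} \Point{y} + \Point{y}^\trans \Matrix{W} \Point{y}$. Letting the rows of $\Matrix{X} \in \Real^{n_1 \times m}$ and $\Matrix{Y} \in \Real^{n_2 \times m}$ be the generating points, the matrix to which $h$ is applied becomes $-2 \Matrix{X}\Matrix{W}\Matrix{Y}^\trans + \Matrix{Z}_1 + \Matrix{Z}_2$, where $\Matrix{Z}_1(i,j) = \Point{x}_i^\trans \Matrix{W} \Point{x}_i$ does not depend on $j$ and $\Matrix{Z}_2(i,j) = \Point{y}_j^\trans \Matrix{W} \Point{y}_j$ does not depend on $i$; both are rank-one, and since the points lie in $\overline{\ball}_{m,R}$ we have $\Norm{\Matrix{Z}_1}{\max}, \Norm{\Matrix{Z}_2}{\max} \leq \Norm{\Matrix{W}}{2} R^2$. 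Using the eigenvalue decomposition $\Matrix{W} = \Matrix{U} \Matrix{\Lambda} \Matrix{U}^\trans$ and distributing the coefficient $-2$ symmetrically, I would set $\Matrix{A} = \sqrt{2}\, \Matrix{X}\Matrix{U}\Matrix{\Lambda}^{\frac{1}{2}}$ and $\Matrix{B} = -\sqrt{2}\, \Matrix{Y}\Matrix{U}\Matrix{\Lambda}^{\frac{1}{2}}$, so that $\Matrix{A}\Matrix{B}^\trans = -2\Matrix{X}\Matrix{W}\Matrix{Y}^\trans$ and, exactly as in the proof of \cref{theorem:1d_function_dotp_gen}, $\Norm{\Matrix{A}}{2,\infty}, \Norm{\Matrix{B}}{2,\infty} \leq \sqrt{2 \Norm{\Matrix{W}}{2}}\, R$.

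Next I would invoke \cref{lemma:prod_plus_factorization} with $p = 2$ to obtain $\Matrix{C} \in \Real^{n_1 \times (m+2)}$ and $\Matrix{D} \in \Real^{n_2 \times (m+2)}$ with $\Matrix{C}\Matrix{D}^\trans = \Matrix{A}\Matrix{B}^\trans + \Matrix{Z}_1 + \Matrix{Z}_2$, i.e.\ $\Matrix{F} = h(\Matrix{C}\Matrix{D}^\trans)$, and
\begin{equation*}
    \Norm{\Matrix{C}}{2,\infty}, \Norm{\Matrix{D}}{2,\infty} \leq \bigl( 2 \Norm{\Matrix{W}}{2} R^2 + \Norm{\Matrix{W}}{2} R^2 + \Norm{\Matrix{W}}{2} R^2 \bigr)^{\frac{1}{2}} = 2 \sqrt{\Norm{\Matrix{W}}{2}}\, R =: \widetilde{R}.
\end{equation*}
Since $\widetilde{R}^2 = 4 \Norm{\Matrix{W}}{2} R^2 \leq 4 \sigma R^2$, the interval $\mathcal{I} = [\min\{\Matrix{C}\Matrix{D}^\trans\}, \max\{\Matrix{C}\Matrix{D}^\trans\}] \subseteq [-\widetilde{R}^2, \widetilde{R}^2] \subseteq [-4\sigma R^2, 4\sigma R^2] \subseteq \Omega$, so \cref{corollary:1d_function_dotp} applies with $\Matrix{A} \leftarrow \Matrix{C}$, $\Matrix{B} \leftarrow \Matrix{D}$ and $R \leftarrow \widetilde{R}$. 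For every integer $t \geq e^2 M \widetilde{R}^2 = 4 e^2 M \Norm{\Matrix{W}}{2} R^2$ it produces $\Matrix{G}$ of $\rank{\Matrix{G}} \leq 1 + (t-1)r$ with $\Norm{\Matrix{F} - \Matrix{G}}{\max} \leq C[e^{-t} + \varepsilon(e^{M \widetilde{R}^2} - 1)] = C[e^{-t} + \varepsilon(e^{4 M \Norm{\Matrix{W}}{2} R^2} - 1)]$, which is exactly the asserted bound.

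Since the whole argument is a reduction, I do not anticipate a genuine obstacle; the only point requiring care is the accounting of constants. The coefficient $-2$ must be split symmetrically between $\Matrix{A}$ and $\Matrix{B}$ so that the $\ell_2 \to \ell_\infty$ norm bounds stay balanced, and one should check that $\Norm{\Matrix{A}}{2,\infty}^2 + \Norm{\Matrix{Z}_1}{\max} + \Norm{\Matrix{Z}_2}{\max}$ equals precisely $4 \Norm{\Matrix{W}}{2} R^2$ — this is what makes the hypothesis $[-4\sigma R^2, 4\sigma R^2] \subseteq \Omega$ and the threshold $t \geq 4 e^2 M \Norm{\Matrix{W}}{2} R^2$ exactly the right ones. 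Keeping $\Norm{\Matrix{W}}{2}$ rather than $\sigma$ throughout the estimates keeps the final error bound sharp.
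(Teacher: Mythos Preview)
Your proposal is correct and follows essentially the same route as the paper: expand $\Norm{\Point{x}-\Point{y}}{\Matrix{W}}^2$, factor $-2\Matrix{X}\Matrix{W}\Matrix{Y}^\trans$ via the eigendecomposition of $\Matrix{W}$ with the $\sqrt{2}$ split between the two factors, absorb the two rank-one terms $\Matrix{Z}_1,\Matrix{Z}_2$ using \cref{lemma:prod_plus_factorization}, and finish with \cref{corollary:1d_function_dotp}. The only cosmetic difference is which of $\Matrix{A}$ or $\Matrix{B}$ carries the minus sign, and your write-up makes the verification of $\mathcal{I}\subseteq\Omega$ and of the threshold on $t$ a bit more explicit than the paper does.
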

\begin{proof}
Exactly as in the proof of \cref{theorem:1d_function_dotp_gen}, we introduce $\Matrix{X} \in \Real^{n_1 \times m}$ and $\Matrix{Y} \in \Real^{n_2 \times m}$ that consist of the sampling points and consider the eigenvalue decomposition $\Matrix{W} = \Matrix{U} \Matrix{\Lambda} \Matrix{U}^\trans$. Note that $\Matrix{F} = h(\Matrix{Z}_1 + \Matrix{Z}_2 - 2 \Matrix{X} \Matrix{W} \Matrix{Y}^\trans)$, where $\Matrix{Z}_1, \Matrix{Z}_2 \in \Real^{n_1 \times n_2}$ are rank-one matrices
\begin{equation*}
    \Matrix{Z}_1 = 
    \begin{bmatrix}
        \Norm{\Point{x}_1}{\Matrix{W}}^2 & \cdots & \Norm{\Point{x}_1}{\Matrix{W}}^2 \\
        \vdots & \ddots & \vdots \\
        \Norm{\Point{x}_{n_1}}{\Matrix{W}}^2 & \cdots & \Norm{\Point{x}_{n_1}}{\Matrix{W}}^2 \\
    \end{bmatrix}, \quad 
    \Matrix{Z}_2 = 
    \begin{bmatrix}
        \Norm{\Point{y}_1}{\Matrix{W}}^2 & \cdots & \Norm{\Point{y}_{n_2}}{\Matrix{W}}^2 \\
        \vdots & \ddots & \vdots \\
        \Norm{\Point{y}_1}{\Matrix{W}}^2 & \cdots & \Norm{\Point{y}_{n_2}}{\Matrix{W}}^2 \
    \end{bmatrix}.
\end{equation*}
Let $\Matrix{A} = -\sqrt{2}\Matrix{X}\Matrix{U} \Matrix{\Lambda}^{\frac{1}{2}}$ and $\Matrix{B} = \sqrt{2}\Matrix{Y}\Matrix{U} \Matrix{\Lambda}^{\frac{1}{2}}$ so that $\Matrix{A} \Matrix{B}^\trans = -2\Matrix{X} \Matrix{W} \Matrix{Y}^\trans$. Then 
\begin{gather*}
    \Norm{\Matrix{A}}{2,\infty} \leq \sqrt{2\Norm{\Matrix{W}}{2}} R, \quad \Norm{\Matrix{B}}{2,\infty} \leq \sqrt{2\Norm{\Matrix{W}}{2}} R, \\
    \Norm{\Matrix{Z}_1}{\max} \leq \Norm{\Matrix{W}}{2} R^2, \quad \Norm{\Matrix{Z}_2}{\max} \leq \Norm{\Matrix{W}}{2} R^2.
\end{gather*}
It remains to apply \cref{lemma:prod_plus_factorization,corollary:1d_function_dotp}.
\end{proof}

\begin{remark}
Just as in \cref{theorem:1d_function_dotp_gen}, the matrix $\Matrix{W}$ need not be positive definite. In particular, the proof does not depend on the signs of $(\Point{x} - \Point{y})^\trans \Matrix{W} (\Point{x} - \Point{y})$.
\end{remark}

When $t = \lceil \log(\varepsilon^{-1}) \rceil$, there exists a matrix $\Matrix{G}$ of $\rank{\Matrix{G}} = \mathcal{O}(\log(n_1 n_2) \varepsilon^{-2} \log(\varepsilon^{-1}))$ such that $\Norm{\Matrix{F} - \Matrix{G}}{\max} = \mathcal{O}(\varepsilon)$, and the hidden constants are independent of the dimension $m$.

\subsection{Localized sample-aware bounds}
\cref{theorem:1d_function_dotp_gen,theorem:1d_function_dist_gen} are based on the Taylor series of the function $h$ at zero and depend on its smoothness there. As a result, \cref{theorem:1d_function_dist_gen} cannot be applied to functions $h$ that are non-smooth at zero, even if all pairwise distances between the sampling points are strictly positive. This technical issue concerns, for example, the exponential kernel $f(\Point{x}, \Point{y}) = \exp(- \Norm{\Point{x} - \Point{y}}{2})$. With the help of \cref{lemma:prod_plus_factorization}, we can overcome this problem by considering Taylor series at arbitrary points.

\begin{theorem}
\label{theorem:1d_local_function_dist}
Let $\varepsilon \in (0,1)$, $n_1, n_2 \in \N$, and $r \in \N$ be given by \cref{eq:compression_rank}. Let $m \in \N$ and $\Matrix{W} \in \Real^{m \times m}$ be symmetric positive-definite. Let $R > 0$, sample points $\{ \Point{x}_i \}_{i = 1}^{n_1} \subset \overline{\ball}_{m,R}$ and $\{ \Point{y}_j \}_{j = 1}^{n_2} \subset \overline{\ball}_{m,R}$, and define $\Matrix{D} \in \Real^{n_1 \times n_2}$ as $\Matrix{D}(i,j) = \Norm{\Point{x}_i - \Point{y}_j}{\Matrix{W}}^2$. Let $h : \Omega \to \Real$ satisfy \cref{assumption:1d_smooth} with $\Omega = [\min\{ \Matrix{D} \}, \max\{ \Matrix{D} \}]$. For every $t \in \N$, there exists $\Matrix{G} \in \Real^{n_1 \times n_2}$ of $\rank{\Matrix{G}} \leq 1 + (t-1)r$ such that
\begin{equation*}
    \Norm{h(\Matrix{D}) - \Matrix{G}}{\max} \leq \frac{(\max\{ \Matrix{D} \} - \min\{ \Matrix{D} \})^t}{2^t \cdot t!} \Norm{h^{(t)}}{\Ell_\infty(\Omega)} + \varepsilon \sum_{s = 1}^{t-1} \frac{|h^{(s)}(\xi)|}{s!} ( 8\Norm{\Matrix{W}}{2} R^2 )^s,
\end{equation*}
where $\xi = (\min\{ \Matrix{D} \} + \max\{ \Matrix{D} \}) / 2$.
\end{theorem}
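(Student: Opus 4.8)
The plan is to follow the proof of \cref{lemma:1d_function_dotp} with two adjustments: expand $h$ in a Taylor series around the midpoint $\xi$ of $\Omega = [\min\{\Matrix{D}\},\max\{\Matrix{D}\}]$ rather than around $0$, and absorb the additive rank-one corrections hidden in $\Matrix{D}$ using \cref{lemma:prod_plus_factorization}, exactly as in the proof of \cref{theorem:1d_function_dist_gen}. For the analytical term I would set $h_t(x) = \sum_{s=0}^{t-1}\frac{h^{(s)}(\xi)}{s!}(x-\xi)^s$. Since $\Omega$ is an interval with midpoint $\xi$, we have $\sup_{x\in\Omega}|x-\xi| = (\max\{\Matrix{D}\}-\min\{\Matrix{D}\})/2$, so the Lagrange form of the remainder (with the intermediate point lying in the convex set $\Omega$) gives
\begin{equation*}
    \Norm{h - h_t}{\Ell_\infty(\Omega)} \leq \frac{(\max\{\Matrix{D}\}-\min\{\Matrix{D}\})^t}{2^t\,t!}\,\Norm{h^{(t)}}{\Ell_\infty(\Omega)},
\end{equation*}
and since every entry of $\Matrix{D}$ lies in $\Omega$, this bounds $\Norm{h(\Matrix{D}) - h_t(\Matrix{D})}{\max}$ by the first term in the claim.

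Next I would factorize $\Matrix{D} - \xi\Matrix{J}$, where $\Matrix{J}\in\Real^{n_1\times n_2}$ is the all-ones matrix. Letting the rows of $\Matrix{X}\in\Real^{n_1\times m}$ and $\Matrix{Y}\in\Real^{n_2\times m}$ be the sample points and writing $\Matrix{D}(i,j) = \Norm{\Point{x}_i}{\Matrix{W}}^2 + \Norm{\Point{y}_j}{\Matrix{W}}^2 - 2\Point{x}_i^\trans\Matrix{W}\Point{y}_j$, the eigendecomposition $\Matrix{W} = \Matrix{U}\Matrix{\Lambda}\Matrix{U}^\trans$ yields $\Matrix{D} - \xi\Matrix{J} = \Matrix{A}\Matrix{B}^\trans + \Matrix{Z}_1 + \Matrix{Z}_2 + \Matrix{Z}_3$ with $\Matrix{A} = -\sqrt{2}\,\Matrix{X}\Matrix{U}\Matrix{\Lambda}^{1/2}$, $\Matrix{B} = \sqrt{2}\,\Matrix{Y}\Matrix{U}\Matrix{\Lambda}^{1/2}$, and the rank-one matrices $\Matrix{Z}_1$ (constant rows $\Norm{\Point{x}_i}{\Matrix{W}}^2$), $\Matrix{Z}_2$ (constant columns $\Norm{\Point{y}_j}{\Matrix{W}}^2$), $\Matrix{Z}_3 = -\xi\Matrix{J}$. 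Here $\Norm{\Matrix{A}}{2,\infty}^2,\Norm{\Matrix{B}}{2,\infty}^2 \leq 2\Norm{\Matrix{W}}{2}R^2$ and $\Norm{\Matrix{Z}_1}{\max},\Norm{\Matrix{Z}_2}{\max}\leq\Norm{\Matrix{W}}{2}R^2$; moreover $0 \leq \Matrix{D}(i,j) \leq \Norm{\Matrix{W}}{2}\Norm{\Point{x}_i-\Point{y}_j}{2}^2 \leq 4\Norm{\Matrix{W}}{2}R^2$, so $\Omega\subseteq[0,4\Norm{\Matrix{W}}{2}R^2]$ and $\Norm{\Matrix{Z}_3}{\max} = |\xi| \leq 4\Norm{\Matrix{W}}{2}R^2$. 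Feeding this into \cref{lemma:prod_plus_factorization} with $p=3$ produces $\Matrix{C}\in\Real^{n_1\times(m+3)}$ and $\Matrix{E}\in\Real^{n_2\times(m+3)}$ with $\Matrix{D}-\xi\Matrix{J} = \Matrix{C}\Matrix{E}^\trans$ and $\Norm{\Matrix{C}}{2,\infty}^2,\Norm{\Matrix{E}}{2,\infty}^2 \leq 8\Norm{\Matrix{W}}{2}R^2$, hence $\Norm{\Matrix{C}}{2,\infty}\Norm{\Matrix{E}}{2,\infty} \leq 8\Norm{\Matrix{W}}{2}R^2$.

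Finally I would assemble the approximant. Evaluating $h_t(\Matrix{D}) = \sum_{s=0}^{t-1}\frac{h^{(s)}(\xi)}{s!}(\Matrix{D}-\xi\Matrix{J})^{\hap s} = \sum_{s=0}^{t-1}\frac{h^{(s)}(\xi)}{s!}(\Matrix{C}\Matrix{E}^\trans)^{\hap s}$, the $s=0$ term $h(\xi)\Matrix{J}$ has rank at most one, while for each $s\in[t-1]$, \cref{lemma:compression_hadamard} applied to $(\Matrix{C}\Matrix{E}^\trans)^{\hap s}$ gives $\Matrix{G}_s$ of rank at most $r$ with $\Norm{(\Matrix{C}\Matrix{E}^\trans)^{\hap s} - \Matrix{G}_s}{\max} \leq \varepsilon(\Norm{\Matrix{C}}{2,\infty}\Norm{\Matrix{E}}{2,\infty})^s \leq \varepsilon(8\Norm{\Matrix{W}}{2}R^2)^s$. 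Putting $\Matrix{G} = h(\xi)\Matrix{J} + \sum_{s=1}^{t-1}\frac{h^{(s)}(\xi)}{s!}\Matrix{G}_s$ gives $\rank{\Matrix{G}}\leq 1 + (t-1)r$, and the triangle inequality combines the first-step estimate with $\Norm{h_t(\Matrix{D}) - \Matrix{G}}{\max} \leq \varepsilon\sum_{s=1}^{t-1}\frac{|h^{(s)}(\xi)|}{s!}(8\Norm{\Matrix{W}}{2}R^2)^s$ to yield the asserted bound.

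I expect no genuinely hard step: the argument is a recombination of \cref{lemma:compression_hadamard,lemma:prod_plus_factorization}. The two points needing care are (i) realizing that expanding at the midpoint $\xi$ — rather than at $0$ or at an endpoint of $\Omega$ — is exactly what produces the favorable factor $2^{-t}$ in the analytical term, and (ii) the a priori estimate $0\leq\Matrix{D}(i,j)\leq 4\Norm{\Matrix{W}}{2}R^2$, which forces $|\xi|\leq 4\Norm{\Matrix{W}}{2}R^2$ and is what makes the constant in the second term come out as $8\Norm{\Matrix{W}}{2}R^2$ rather than something larger.
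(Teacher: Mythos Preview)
Your proposal is correct and follows essentially the same route as the paper's proof: Taylor expansion of $h$ at the midpoint $\xi$, the observation that $\Matrix{D}-\xi\Matrix{J}$ is $\Matrix{A}\Matrix{B}^\trans$ plus three rank-one corrections, \cref{lemma:prod_plus_factorization} to bound the row norms of the resulting factorization, and \cref{lemma:compression_hadamard} on each Hadamard power. The paper's proof is terser---it writes the intermediate bound as $\varepsilon(4\Norm{\Matrix{W}}{2}R^2+|\xi|)^s$ and then invokes $|\xi|\leq 4\Norm{\Matrix{W}}{2}R^2$, referring back to the estimates in the proof of \cref{theorem:1d_function_dist_gen} rather than spelling them out---but the underlying computation is identical to yours.
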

\begin{proof}
The truncated Taylor series $h_t(x) = \sum_{s = 0}^{t-1} \frac{h^{(s)}(\xi)}{s!} (x - \xi)^s$ of $h$ at $\xi$ satisfies
\begin{equation*}
    \Norm{h - h_t}{\Ell_\infty(\Omega)} \leq \frac{(\max\{ \Matrix{D} \} - \min\{ \Matrix{D} \})^t}{2^t \cdot t!} \Norm{h^{(t)}}{\Ell_\infty(\Omega)}.
\end{equation*}
Next, we evaluate $h_t$ entrywise as $h_t(\Matrix{D}) = \sum_{s = 0}^{t-1} \frac{h^{(s)}(\xi)}{s!} (\Matrix{D} - \xi)^{\hap s}$. Since $\xi$ is a scalar, the entrywise difference $\Matrix{D} - \xi$ is a rank-one perturbation of $\Matrix{D}$, and we can apply \cref{lemma:prod_plus_factorization,lemma:compression_hadamard} to obtain low-rank approximations of its Hadamard powers. Namely, for $s \in [t-1]$, there exists $\Matrix{G}_s \in \Real^{n_1 \times n_2}$ of $\rank{\Matrix{G}_s} \leq r$ such that
\begin{equation*}
    \Norm{(\Matrix{D} - \xi)^{\hap s} - \Matrix{G}_s}{\max} \leq \varepsilon (4\Norm{\Matrix{W}}{2} R^2 + |\xi| )^s \leq \varepsilon ( 8\Norm{\Matrix{W}}{2} R^2 )^s,
\end{equation*}
where we used estimates from the proof of \cref{theorem:1d_function_dist_gen}. Then $\Matrix{G} = h(\xi) + \sum_{s = 1}^{t-1} \frac{h^{(s)}(\xi)}{s!} \Matrix{G}_s$ of $\rank{\Matrix{G}} \leq 1 + (t-1)r$ satisfies
\begin{align*}
    \Norm{h(\Matrix{D}) - \Matrix{G}}{\max} &\leq \Norm{h(\Matrix{D}) - h_t(\Matrix{D})}{\max} + \Norm{h_t(\Matrix{D}) - \Matrix{G}}{\max} \\
    &\leq \frac{(\max\{ \Matrix{D} \} - \min\{ \Matrix{D} \})^t}{2^t \cdot t!} \Norm{h^{(t)}}{\Ell_\infty(\Omega)} + \varepsilon \sum_{s = 1}^{t-1} \frac{|h^{(s)}(\xi)|}{s!} ( 8\Norm{\Matrix{W}}{2} R^2 )^s.
\end{align*}
\end{proof}

\begin{corollary}
\label{corollary:1d_function_dist_gen_shifted}
In the setting of \cref{theorem:1d_local_function_dist}, let $h : \Omega \to \Real$ satisfy \cref{assumption:1d_analytic}. Then, for $t \geq e^2 M (\max\{ \Matrix{D} \} - \min\{ \Matrix{D} \}) / 2$, we have
\begin{equation*}
    \Norm{h(\Matrix{D}) - \Matrix{G}}{\max} \leq C \Big[ e^{-t} + \varepsilon (e^{8 M \Norm{\Matrix{W}}{2} R^2} - 1) \Big].
\end{equation*}
\end{corollary}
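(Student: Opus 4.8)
The plan is to feed the growth bound $\Norm{h^{(s)}}{\Ell_\infty(\Omega)} \leq C M^s$ from \cref{assumption:1d_analytic} into the two-term estimate of \cref{theorem:1d_local_function_dist} and then treat each term exactly as in the passage from \cref{lemma:1d_function_dotp} to \cref{corollary:1d_function_dotp}. The matrix $\Matrix{G}$ and its rank bound $\rank{\Matrix{G}} \leq 1 + (t-1)r$ are inherited verbatim from \cref{theorem:1d_local_function_dist}; only the right-hand side has to be simplified.

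For the first (``analytical'') term, substituting $\Norm{h^{(t)}}{\Ell_\infty(\Omega)} \leq C M^t$ turns it into $\tfrac{C}{t!}\bigl(\tfrac{1}{2}M(\max\{\Matrix{D}\} - \min\{\Matrix{D}\})\bigr)^t$. Applying Stirling's formula $t! > \sqrt{2\pi t}\,(t/e)^t$ as in \cref{example:1d}, this is at most $C\bigl(\tfrac{eM(\max\{\Matrix{D}\} - \min\{\Matrix{D}\})}{2t}\bigr)^t$, which does not exceed $C e^{-t}$ precisely when $\tfrac{eM(\max\{\Matrix{D}\} - \min\{\Matrix{D}\})}{2t} \leq e^{-1}$, that is, when $t \geq e^2 M(\max\{\Matrix{D}\} - \min\{\Matrix{D}\})/2$, which is the hypothesis of the corollary. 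For the second (``algebraic'') term, I would bound $|h^{(s)}(\xi)| \leq \Norm{h^{(s)}}{\Ell_\infty(\Omega)} \leq C M^s$ for each $s$ and extend the finite sum over $s$ to the full exponential series, obtaining $\varepsilon C \sum_{s \geq 1} \tfrac{(8M\Norm{\Matrix{W}}{2} R^2)^s}{s!} = \varepsilon C (e^{8 M \Norm{\Matrix{W}}{2} R^2} - 1)$. Adding the two contributions gives the claimed bound.

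There is no real obstacle: the corollary is a routine specialization of \cref{theorem:1d_local_function_dist}. The only point requiring a little care is bookkeeping the factor $1/2$ carried over from the half-width of the Taylor interval $\Omega$ in \cref{theorem:1d_local_function_dist}, so that the Stirling estimate delivers exactly the threshold $t \geq e^2 M(\max\{\Matrix{D}\} - \min\{\Matrix{D}\})/2$ rather than a version off by a constant. One should also keep in mind, as for \cref{theorem:1d_function_dotp_gen}, that the estimate is only useful in the regime where $t$ can be chosen of order $\log(\varepsilon^{-1})$ while still exceeding this threshold, which forces $\varepsilon$ to be small relative to $M$, $\Norm{\Matrix{W}}{2}$, and $R$.
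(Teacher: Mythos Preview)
Your proposal is correct and matches the paper's approach exactly: the paper's proof is the one-line hint ``Use Stirling's formula (as in \cref{example:1d})'', and you have filled in precisely those details---substituting the derivative bound from \cref{assumption:1d_analytic} into both terms of \cref{theorem:1d_local_function_dist}, applying Stirling to the analytical term to obtain the threshold on $t$, and summing the algebraic term to the exponential series.
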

\begin{proof}
Use Stirling's formula (as in \cref{example:1d}).
\end{proof}

\cref{corollary:1d_function_dist_gen_shifted} extends the scope of \cref{theorem:1d_function_dist_gen} to functions $h$ that are non-smooth at zero. There are two more improvements. We show that the exponential decay of the ``analytical'' error begins at smaller values of $t$ and narrow the domain $\Omega$ to which the constants $C$ and $M$ correspond. The same modifications can be incorporated in \cref{theorem:1d_function_dotp_gen} too, but they will not have any significant impact unless all of the inner products have the same sign.
\section{Positive-definite kernels}
\label{sec:kernels}
Previously, we relied on the smoothness of the generating function to approximate it via the truncated Taylor series. With different analytical assumptions come other approaches to low-rank approximation of functions.

Kernel functions are actively used in machine learning. A function $f : \Omega^2 \to \Real$ is called a \emph{positive-definite} kernel if it is symmetric, $f(\Point{x}, \Point{y}) = f(\Point{y}, \Point{x})$ for all $\Point{x}, \Point{y} \in \Omega$, and for each $n \in \N$, every $n \times n$ symmetric matrix generated with $f$ according to $\Matrix{F}(i,j) = f(\Point{x}_i, \Point{x}_j)$, $\{ \Point{x}_i \}_{i = 1}^{n} \subset \Omega$, is positive-definite. We shall say that such matrices $\Matrix{F}$ are symmetric function-generated.

Many of the kernels used in practice (such as the Gaussian, Cauchy, and exponential kernels) possess an additional property of being \emph{shift-invariant}, by which we mean that there exists a function $\kappa$ such that $f(\Point{x}, \Point{y}) = \kappa(\Point{x} - \Point{y})$. A celebrated result related to shift-invariant kernels is their low-rank approximation via random Fourier features \cite{rahimi2007random}.

\begin{theorem}
\label{theorem:rff}
Let $f: \Real^m \times \Real^m \to \Real$ be a continuous shift-invariant positive-definite kernel with $\kappa(\Point{0}) = 1$. For every $R > 0$ and $\varepsilon \in (0,1)$, there exist $\rho \in \N$ and $\Point{\omega}_1, \ldots, \Point{\omega}_\rho \in \Real^m$ such that $f_\rho(\Point{x}, \Point{y}) = \frac{1}{\rho}\sum\nolimits_{i = 1}^{\rho} \cos(\Point{\omega}_i^T (\Point{x} - \Point{y}))$ satisfies $\Norm{f - f_\rho}{\Ell_{\infty}(\overline{\ball}_{m,R}^2)} \leq \varepsilon$.
\end{theorem}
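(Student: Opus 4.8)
The plan is to reconstruct the classical random-Fourier-features argument of \cite{rahimi2007random}: write the kernel as an expectation of random cosines via Bochner's theorem, recognize $f_\rho$ as an empirical average of those cosines, show that this average is uniformly close to its mean on the compact domain with positive probability, and conclude by the probabilistic method that a suitable deterministic choice of $\Point{\omega}_1, \ldots, \Point{\omega}_\rho$ exists.

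First, by Bochner's theorem a continuous shift-invariant positive-definite kernel with $\kappa(\Point{0}) = 1$ is the Fourier transform of a Borel probability measure $p$ on $\Real^m$, so $\kappa(\Point{z}) = \int_{\Real^m} e^{\mathrm{i}\Point{\omega}^\trans\Point{z}}\, p(\mathrm{d}\Point{\omega})$; taking real parts (valid because $\kappa$ is real-valued) gives $\kappa(\Point{z}) = \Expectation_{\Point{\omega}\sim p}[\cos(\Point{\omega}^\trans\Point{z})]$. Thus if the $\Point{\omega}_i$ are i.i.d.\ draws from $p$, then for each fixed $(\Point{x},\Point{y})$ the quantity $f_\rho(\Point{x},\Point{y})$ is an unbiased estimator of $f(\Point{x},\Point{y}) = \kappa(\Point{x}-\Point{y})$, assembled from summands in $[-1,1]$. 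One caveat: since only continuity is assumed, $p$ may have infinite moments (for the exponential kernel $p$ is Cauchy), which the covering step below cannot handle; I would first truncate, choosing $T$ with $p(\Norm{\Point{\omega}}{2} > T) < \varepsilon/4$, replacing $p$ by its conditioning $\tilde{p}$ on $\Norm{\Point{\omega}}{2} \le T$, and setting $\tilde\kappa(\Point{z}) = \Expectation_{\Point{\omega}\sim\tilde{p}}[\cos(\Point{\omega}^\trans\Point{z})]$. A short estimate gives $\Norm{\kappa-\tilde\kappa}{\Ell_\infty(\Real^m)} < \varepsilon/2$, and $\tilde{p}$ is now compactly supported.

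Next, draw $\Point{\omega}_1,\ldots,\Point{\omega}_\rho$ i.i.d.\ from $\tilde{p}$ and set $g_\rho(\Point{z}) = \tfrac{1}{\rho}\sum_i\cos(\Point{\omega}_i^\trans\Point{z}) - \tilde\kappa(\Point{z})$, so that $f(\Point{x},\Point{y}) - f_\rho(\Point{x},\Point{y}) = (\kappa-\tilde\kappa)(\Point{x}-\Point{y}) - g_\rho(\Point{x}-\Point{y})$ with the argument $\Point{x}-\Point{y}$ ranging over $\overline{\ball}_{m,2R}$. Cover $\overline{\ball}_{m,2R}$ by $N = (cR/\delta)^m$ balls of radius $\delta$; on each of them both $\Point{z}\mapsto\tfrac{1}{\rho}\sum_i\cos(\Point{\omega}_i^\trans\Point{z})$ and $\tilde\kappa$ are Lipschitz with constant at most $T$ (this is where the compact support of $\tilde{p}$ is used), so it suffices to bound $|g_\rho|$ at the $N$ centers. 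Hoeffding's inequality at each center plus a union bound give $\Prob{\max_{\mathrm{centers}}|g_\rho| \ge \varepsilon/4} \le 2N\exp(-\rho\varepsilon^2/32)$; taking $\delta = \varepsilon/(8T)$ (so that the net contributes at most $\varepsilon/4$) and then $\rho$ large enough to push the right-hand side below $1$ produces frequencies $\Point{\omega}_1,\ldots,\Point{\omega}_\rho$ with $\sup_{\overline{\ball}_{m,2R}}|g_\rho| \le \varepsilon/2$. Together with the truncation bound this yields $\Norm{f - f_\rho}{\Ell_\infty(\overline{\ball}_{m,R}^2)} \le \varepsilon$.

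The hard part is precisely the passage from pointwise concentration to a bound uniform over the continuum $\overline{\ball}_{m,2R}$, which the $\delta$-net together with the post-truncation Lipschitz estimates resolves; the dimension $m$ enters only through the net cardinality $N$, which is why $\rho$ is allowed to depend on $m$ in the statement.
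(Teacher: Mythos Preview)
The paper does not supply its own proof of this theorem; it is quoted from \cite{rahimi2007random} and used as a black box in the proof of \cref{theorem:kernel_function}. Your argument is a faithful reconstruction of the Rahimi--Recht random-Fourier-features proof (Bochner representation, Hoeffding at net points, Lipschitz extension), and your truncation step is a genuine and necessary addition: the original Claim~1 of \cite{rahimi2007random} assumes the spectral measure has finite second moment $\sigma_p^2 = \Expectation_p\Norm{\Point{\omega}}{2}^2$, whereas the theorem as stated here assumes only continuity of the kernel, so, e.g., the exponential kernel with its Cauchy spectral measure must be covered. Conditioning $p$ on $\{\Norm{\Point{\omega}}{2} \le T\}$ before running the covering/Hoeffding argument is exactly the right fix---it caps the Lipschitz constant at $T$ while perturbing $\kappa$ by at most $2\,p(\Norm{\Point{\omega}}{2} > T) < \varepsilon/2$---and yields the purely qualitative existence statement as written. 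Since the theorem does not quantify $\rho$, the fact that your $\rho$ now depends on the unspecified $T$ (hence on the tail of $p$) is harmless.
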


According to \cite{rahimi2007random}, the frequencies $\{ \Point{\omega}_i \}$ can be chosen at random from a suitable distribution so that the approximation bound is guaranteed with constant probability when $\rho$ is of order $m / \varepsilon^2$, up to logarithmic factors. The rank bound was improved in the case where the kernel is restricted to an algebraic variety by replacing $m$ with the dimension of the variety \cite{altschuler2023kernel}. Using random embeddings, we can obtain rank bounds that are independent of $m$.

\begin{theorem}
\label{theorem:kernel_function}
Let $\varepsilon \in (0,1)$, $n_1, n_2 \in \N$, and $r \in \N$ be given by \cref{eq:compression_rank}. Let $f: \Real^m \times \Real^m \to \Real$ satisfy the assumptions of \cref{theorem:rff}. Then for every $\Matrix{F} \in \Real^{n_1 \times n_2}$ generated with $f$ based on any points from $\Real^m$, there exists $\Matrix{G} \in \Real^{n_1 \times n_2}$ of $\rank{\Matrix{G}} \leq r$ such that $\Norm{\Matrix{F} - \Matrix{G}}{\max} \leq \varepsilon$.
\end{theorem}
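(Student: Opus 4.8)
The plan is to exhibit $\Matrix{F}$ as an \emph{exact} factorization $\Matrix{F} = \Matrix{A}\Matrix{B}^\trans$ whose factors have every row of unit Euclidean norm, and then to feed this into \cref{theorem:compression}. Because $\Norm{\Matrix{A}}{2,\infty} = \Norm{\Matrix{B}}{2,\infty} = 1$, that theorem immediately delivers a matrix $\Matrix{G}$ of $\rank{\Matrix{G}} \leq r$ with $\Norm{\Matrix{F} - \Matrix{G}}{\max} \leq \varepsilon$, with no hidden constant and, crucially, no dependence on $m$. The role of the assumption $\kappa(\Point{0}) = 1$ is precisely to make both $\ell_2 \to \ell_\infty$ norms equal to one; a general value $\kappa(\Point{0})$ would only rescale the right-hand side.

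To build the factorization I would invoke Bochner's theorem: a continuous shift-invariant positive-definite kernel on $\Real^m$ with $\kappa(\Point{0}) = 1$ is the Fourier transform of a symmetric probability measure $\mu$ on $\Real^m$, so that $\kappa(\Point{z}) = \int_{\Real^m} \cos(\Point{\omega}^\trans \Point{z})\, d\mu(\Point{\omega})$, and therefore
\begin{equation*}
    f(\Point{x}, \Point{y}) = \int_{\Real^m} \big[ \cos(\Point{\omega}^\trans \Point{x}) \cos(\Point{\omega}^\trans \Point{y}) + \sin(\Point{\omega}^\trans \Point{x}) \sin(\Point{\omega}^\trans \Point{y}) \big]\, d\mu(\Point{\omega}).
\end{equation*}
This identifies $f(\Point{x},\Point{y}) = \dotp{\Phi(\Point{x})}{\Phi(\Point{y})}$ as an inner product of the feature maps $\Phi(\Point{x}) = \big( \Point{\omega} \mapsto \cos(\Point{\omega}^\trans \Point{x}),\, \Point{\omega} \mapsto \sin(\Point{\omega}^\trans \Point{x}) \big)$ in the Hilbert space $L^2(\mu) \oplus L^2(\mu)$, and the normalization gives $\norm{\Phi(\Point{x})}^2 = \int_{\Real^m} \big( \cos^2(\Point{\omega}^\trans \Point{x}) + \sin^2(\Point{\omega}^\trans \Point{x}) \big)\, d\mu(\Point{\omega}) = 1$ for every $\Point{x} \in \Real^m$. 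This is exactly the representation that underlies the random Fourier features of \cref{theorem:rff}; alternatively, one may use the canonical feature map $\Phi(\Point{x}) = f(\Point{x}, \cdot)$ of the reproducing kernel Hilbert space of $f$, for which $\norm{\Phi(\Point{x})}^2 = f(\Point{x}, \Point{x}) = \kappa(\Point{0}) = 1$, a route that uses shift invariance only through the value $\kappa(\Point{0})$.

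The single technical point is that \cref{theorem:compression} is stated for finite-dimensional factors, whereas $\Phi$ a priori lives in an infinite-dimensional space. I would deal with this by restricting to the span of the relevant features: with $\{ \Point{x}_i \}_{i=1}^{n_1}$ and $\{ \Point{y}_j \}_{j=1}^{n_2}$ the points generating $\Matrix{F}$, set $V = \mathrm{span}\{ \Phi(\Point{x}_i), \Phi(\Point{y}_j) \}$, so that $\dim V = d \leq n_1 + n_2$; fixing an orthonormal basis of $V$ and collecting the coordinate vectors of $\Phi(\Point{x}_i)$ and of $\Phi(\Point{y}_j)$ as the rows of $\Matrix{A} \in \Real^{n_1 \times d}$ and $\Matrix{B} \in \Real^{n_2 \times d}$ yields $\Matrix{A}\Matrix{B}^\trans = \Matrix{F}$ exactly, while orthogonal projection onto $V$ does not change the features themselves, so $\Norm{\Matrix{A}}{2,\infty} = \max_i \norm{\Phi(\Point{x}_i)} = 1$ and likewise $\Norm{\Matrix{B}}{2,\infty} = 1$. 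Applying \cref{theorem:compression} with these $\Matrix{A}$ and $\Matrix{B}$ (its dimension parameter being $d$) then finishes the argument.

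I do not expect a genuine obstacle here; the only thing to watch is not to leak a multiplicative factor when turning the analytic-probabilistic representation of $f$ into an honest matrix factorization with controlled $\Norm{\cdot}{2,\infty}$. A slightly different route would approximate $f$ by a finite random-feature sum $f_\rho$ as in \cref{theorem:rff}, whose own feature map $\tfrac{1}{\sqrt{\rho}}\big( \cos(\Point{\omega}_i^\trans \Point{x}),\, \sin(\Point{\omega}_i^\trans \Point{x}) \big)_{i=1}^{\rho}$ is again unit-norm, and then pass to the limit $\rho \to \infty$ using that the set of matrices of rank at most $r$ is closed; but the Bochner/RKHS version lands directly at total error $\varepsilon$ rather than $\varepsilon$ plus a vanishing remainder, which is why I would favor it.
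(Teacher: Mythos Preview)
Your argument is correct and takes a genuinely different route from the paper. The paper invokes \cref{theorem:rff} to obtain, for each $\delta>0$, a \emph{finite} random-feature factorization $\Matrix{A}\Matrix{B}^\trans$ with $\Norm{\Matrix{A}}{2,\infty}=\Norm{\Matrix{B}}{2,\infty}=1$ that approximates $\Matrix{F}$ to within $\delta$, applies \cref{theorem:compression} to get $\Norm{\Matrix{F}-\Matrix{G}_\delta}{\max}\le\delta+\varepsilon$, and then lets $\delta\to 0$ along a subsequence, using closedness of $\{\rank{\cdot}\le r\}$---exactly the ``slightly different route'' you sketch at the end. Your preferred argument bypasses both the approximation and the limit: you go straight to an \emph{exact} infinite-dimensional feature representation (via Bochner or the RKHS canonical map) and then reduce to finite dimensions by restricting to $V=\mathrm{span}\{\Phi(\Point{x}_i),\Phi(\Point{y}_j)\}$, which leaves all inner products and all norms $\norm{\Phi(\cdot)}=1$ intact. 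This is cleaner and lands directly on the sharp bound $\varepsilon$. It also makes transparent the generalization the paper records only in a remark: your RKHS variant needs nothing about shift invariance beyond $f(\Point{x},\Point{x})=\kappa(\Point{0})=1$, so it works verbatim for any positive-semidefinite kernel with unit diagonal. The paper's approach, in turn, has the minor expository advantage of staying entirely within the random-Fourier-features narrative already introduced in \cref{theorem:rff}.
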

\begin{proof}
Let $R > 0$ be such that the sampling points satisfy $\{ \Point{x}_i \}_{i = 1}^{n_1} \subset \overline{\ball}_{m,R}$ and $\{ \Point{y}_j \}_{j = 1}^{n_2} \subset \overline{\ball}_{m,R}$. Fix $\delta > 0$ and consider the function $f_\rho$ from \cref{theorem:rff} such that $\Norm{f - f_\rho}{\Ell_{\infty}(\overline{\ball}_{m,R}^2)} \leq \delta$. Evaluate $f_\rho$ at the sampling points to obtain a matrix that can be factorized as $\Matrix{A} \Matrix{B}^\trans$ with
\begin{equation*}
    \Matrix{A} = \frac{1}{\sqrt{\rho}}
    \begin{bmatrix}
        \cos(\Point{\omega}_1^\trans \Point{x}_1) & \sin(\Point{\omega}_1^\trans \Point{x}_1) & \cdots & \cos(\Point{\omega}_\rho^\trans \Point{x}_1) & 
        \sin(\Point{\omega}_\rho^\trans \Point{x}_1) \\
        \vdots & \vdots & \ddots & \vdots & \vdots \\
        \cos(\Point{\omega}_1^\trans \Point{x}_{n_1}) & \sin(\Point{\omega}_1^\trans \Point{x}_{n_1}) & \cdots & \cos(\Point{\omega}_\rho^\trans \Point{x}_{n_1}) & 
        \sin(\Point{\omega}_\rho^\trans \Point{x}_{n_1}) \\
    \end{bmatrix} \in \Real^{n_1 \times 2\rho},  
\end{equation*}
\begin{equation*}
    \Matrix{B} = \frac{1}{\sqrt{\rho}}
    \begin{bmatrix}
        \cos(\Point{\omega}_1^\trans \Point{y}_1) & \sin(\Point{\omega}_1^\trans \Point{y}_1) & \cdots & \cos(\Point{\omega}_\rho^\trans \Point{y}_1) & 
        \sin(\Point{\omega}_\rho^\trans \Point{y}_1) \\
        \vdots & \vdots & \ddots & \vdots & \vdots \\
        \cos(\Point{\omega}_1^\trans \Point{y}_{n_2}) & \sin(\Point{\omega}_1^\trans \Point{y}_{n_2}) & \cdots & \cos(\Point{\omega}_\rho^\trans \Point{y}_{n_2}) & 
        \sin(\Point{\omega}_\rho^\trans \Point{y}_{n_2}) \\
    \end{bmatrix} \in \Real^{n_2 \times 2\rho}.
\end{equation*}
Clearly, $\Norm{\Matrix{A}}{2,\infty} = \Norm{\Matrix{B}}{2,\infty} = 1$. We can then apply \cref{theorem:compression} to find $\Matrix{G} \in \Real^{n_1 \times n_2}$ of $\rank{\Matrix{G}} \leq r$ such that $\Norm{\Matrix{F} - \Matrix{G}}{\max} \leq \delta + \varepsilon$. Note that $\Matrix{G}$ depends on $\delta$ and take a vanishing sequence $\{ \delta_l \}$. The corresponding sequence of low-rank matrices $\{ \Matrix{G}_l \}$ is bounded by the above inequality, hence it has a subsequence converging to $\Matrix{G}_{\ast} \in \Real^{n_1 \times n_2}$ such that $\Norm{\Matrix{F} - \Matrix{G}_{\ast}}{\max} \leq \varepsilon$ and $\rank{\Matrix{G}_\ast} \leq r$, since the set $\set{\Matrix{G} \in \Real^{n_1 \times n_2}}{\rank{\Matrix{G}} \leq r}$ is closed.
\end{proof}

\begin{remark}
\cref{theorem:kernel_function} is applicable to any (non-symmetric and rectangular) matrix generated with a shift-invariant positive-definite kernel. Similar bounds hold for symmetric function-generated matrices when the kernel is only positive-semidefinite \cite{alon2013approximate}. In \cref{theorem:compression}, note that such matrices can be represented as $\Matrix{F} = \Matrix{A} \Matrix{A}^\trans$ and $\Norm{\Matrix{A}}{2,\infty}^2 \leq \max_{i \in [n]}|\Matrix{F}(i,i)|$.
\end{remark}

\cref{theorem:kernel_function} lifts the restrictions on $\varepsilon$ and removes the $\log(\varepsilon^{-1})$ factor from the rank bounds for the Gaussian, Cauchy, and exponential kernels (cf. \cref{remark:gaussian}). Meanwhile, there are innocently looking functions like $f(\Point{x}, \Point{y}) = \exp(-\Norm{\Point{x} - \Point{y}}{2}^4)$ that are not positive-definite kernels and therefore cannot be analyzed with \cref{theorem:kernel_function} (but can be with \cref{theorem:1d_function_dist_gen}). Indeed, $f$ violates Schoenberg's criterion \cite{schaback2001characterization}, and thus there exists\footnote{We do not know if there is any value of $m$ for which $f$ is a positive-definite kernel in $\Real^m$.} $m \in \N$ such that it is not a positive-definite kernel in $\Real^m$. For example, this is the case for $m = 1$: according to the numerical evidence \cite{boyd2013quartic}, the function violates Bochner's criterion \cite{schaback2001characterization}. 
\section{Function-generated tensors}
\label{sec:tensors}
Many datasets are naturally arranged as tensors with $d \geq 3$ modes. We continue our investigation by analyzing the entrywise low-rank approximation of tensors $\Tensor{F} \in \Real^{n_1 \times \cdots \times n_d}$ generated with a function $f : \Omega_1 \times \cdots \times \Omega_d \to \Real$ as
\begin{equation*}
    \Tensor{F}(i_1, \ldots, i_d) = f(\Point{x}^{(1)}_{i_1}, \ldots, \Point{x}^{(d)}_{i_d}), \quad \{ \Point{x}^{(j)}_{i_j} \}_{i_j = 1}^{n_j} \subset \Omega_j, \quad j \in [d].
\end{equation*}
Given the success of inner-product generating functions in the matrix case, we turn to generating functions that depend only on the ``\emph{higher-order inner product}'' of their variables:
\begin{equation}
\label{eq:hoip}
    \langle \Point{x}^{(1)}, \ldots, \Point{x}^{(d)} \rangle = \sum_{\alpha = 1}^{m} \prod_{j = 1}^{d} \Point{x}^{(j)}(\alpha).
\end{equation}
Thus, we shall focus on generating functions $f(\Point{x}^{(1)}, \ldots, \Point{x}^{(d)}) = h(\langle \Point{x}^{(1)}, \ldots, \Point{x}^{(d)} \rangle)$. They appear in the context of tensor kernels \cite{salzo2018solving} and tensor attention \cite{alman2023capture, sanford2024representational}; see \Cref{subsec:attention}.

Let us describe the tensor that, upon the entrywise application of $h$, becomes $\Tensor{F}$. For each $j \in [d]$, consider a matrix $\Matrix{X}_j \in \Real^{n_j \times m}$ given by $\Matrix{X}_j^\trans = \begin{bmatrix}\Point{x}^{(j)}_1 & \cdots & \Point{x}^{(j)}_{n_j}\end{bmatrix}$. We define a tensor $\Tensor{P} = \llbracket \Matrix{X}_1, \ldots, \Matrix{X}_d \rrbracket \in \Real^{n_1 \times \cdots \times n_d}$ with entries
\begin{equation}
\label{eq:cp}
    \Tensor{P}(i_1, \dots, i_d) = \sum_{\alpha = 1}^{m} \Matrix{X}_1(i_1, \alpha) \dots \Matrix{X}_d(i_d, \alpha) = \langle \Point{x}^{(1)}_{i_1}, \dots, \Point{x}^{(d)}_{i_d} \rangle.
\end{equation}
Such representation of a tensor is known as its CP decomposition of length $m$ ($m$ need not be minimal). See \cite{kolda2009tensor,ballard2025tensor} for an introduction to tensor decompositions.

We need to specify a particular tensor decomposition with which to approximate $\Tensor{F} = h(\Tensor{P}) = h(\llbracket \Matrix{X}_1, \ldots, \Matrix{X}_d \rrbracket)$. All most widely used tensor decompositions reduce to the usual low-rank matrix decomposition when $d=2$, but for $d \geq 3$ they exhibit different properties. We choose the TT decomposition \cite{oseledets2009breaking, oseledets2011tensor}, also known as matrix product states \cite{vidal2003efficient, schollwock2011density}:
\begin{equation*}
    \Tensor{F}(i_1, \dots, i_d) = \sum_{\alpha_1 = 1}^{r_1} \dots \sum_{\alpha_{d-1} = 1}^{r_{d-1}} \Tensor{G}_1(1, i_1, \alpha_1) \Tensor{G}_2(\alpha_1, i_2, \alpha_2) \dots \Tensor{G}_d(\alpha_{d-1}, i_d, 1).
\end{equation*}
The tuple $(r_1, \ldots, r_{d-1}) \in \N^{d-1}$ is called the TT rank of the decomposition. The smallest (componentwise) TT rank among all TT decompositions of $\Tensor{F}$ is called its TT rank $\ttrank{\Tensor{F}}$.

Having fixed the initial (CP) and final (TT) tensor decompositions, we now require a suitable analog of \cref{theorem:compression}. The tensor norm $\Norm{\cdot}{\max}$ is defined just as for matrices.

\begin{theorem}[{\cite[Corollary 1.8]{budzinskiy2025entrywise}}]
\label{theorem:compression_tensor}
Let $\varepsilon \in (0,1)$ and $n_1, \ldots, n_d \in \N$. Consider
\begin{equation}
\label{eq:compression_rank_tensor}
    r = \left\lceil \frac{c_d}{\varepsilon^2} \log(2e \cdot n_1 \dots n_d ) \right\rceil \in \N
\end{equation}
where $c_d > 0$ is an absolute constant that depends\footnote{We do not know the exact law, but the provable estimates of $c_d$ grow at least as $d^d$} only on $d$ and $e$ is Euler's number. For every $m \in \N$ and every $\Tensor{A} \in \Real^{n_1 \times \cdots \times n_d}$ with a CP decomposition $\Tensor{A} = \llbracket \Matrix{A}_1, \ldots, \Matrix{A}_d \rrbracket$ of length $m$, there exists $\Tensor{G} \in \Real^{n_1 \times \cdots \times n_d}$ of $\ttrank{\Tensor{G}} \preccurlyeq r$ such that
\begin{equation*}
    \Norm{\Tensor{A} - \Tensor{G}}{\max} \leq \varepsilon \cdot \prod_{j = 1}^{d} \Norm{\Matrix{A}_j}{2,\infty}.
\end{equation*}
\end{theorem}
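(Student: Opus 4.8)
The plan is to produce $\Tensor{G}$ as a \emph{random} TT tensor whose cores are obtained by inserting independent Gaussian matrices into the bonds of a standard TT-rank-$m$ representation of the CP tensor $\Tensor{A}$, to prove entrywise concentration of $\Tensor{G}$ around $\Tensor{A}$, and to finish with a union bound over the $n_1\cdots n_d$ entries (so that a single favourable realization of the randomness supplies the deterministic $\Tensor{G}$). Concretely, given $\Tensor{A} = \llbracket \Matrix{A}_1, \ldots, \Matrix{A}_d \rrbracket$, draw independent matrices $\Matrix{R}^{(1)}, \ldots, \Matrix{R}^{(d-1)} \in \Real^{m \times r}$ with i.i.d.\ $\mathcal{N}(0, 1/r)$ entries and set the TT cores
\begin{gather*}
    \Tensor{G}_1(1,i_1,\beta_1) = \sum\nolimits_\alpha \Matrix{A}_1(i_1,\alpha)\,\Matrix{R}^{(1)}(\alpha,\beta_1), \\
    \Tensor{G}_j(\beta_{j-1},i_j,\beta_j) = \sum\nolimits_\alpha \Matrix{R}^{(j-1)}(\alpha,\beta_{j-1})\,\Matrix{A}_j(i_j,\alpha)\,\Matrix{R}^{(j)}(\alpha,\beta_j), \quad 2 \leq j \leq d-1, \\
    \Tensor{G}_d(\beta_{d-1},i_d,1) = \sum\nolimits_\alpha \Matrix{R}^{(d-1)}(\alpha,\beta_{d-1})\,\Matrix{A}_d(i_d,\alpha).
\end{gather*}
All bond dimensions equal $r$, so $\ttrank{\Tensor{G}} \preccurlyeq r$ for every realization. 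Contracting the cores and summing each bond index turns $\sum_{\beta_k}\Matrix{R}^{(k)}(\gamma_k,\beta_k)\Matrix{R}^{(k)}(\gamma_{k+1},\beta_k)$ into $\Matrix{S}^{(k)}(\gamma_k,\gamma_{k+1})$ with $\Matrix{S}^{(k)} = \Matrix{R}^{(k)}\Matrix{R}^{(k)\trans}$, so
\begin{equation*}
    \Tensor{G}(i_1,\ldots,i_d) = \sum_{\gamma_1,\ldots,\gamma_d} \Big(\prod\nolimits_j \Matrix{A}_j(i_j,\gamma_j)\Big)\prod_{k=1}^{d-1} \Matrix{S}^{(k)}(\gamma_k,\gamma_{k+1}).
\end{equation*}
Because $\Expectation\Matrix{S}^{(k)} = \Matrix{I}_m$ and the $\Matrix{S}^{(k)}$ are independent, the expectation forces $\gamma_1 = \cdots = \gamma_d$ and $\Expectation\Tensor{G}(i_1,\ldots,i_d) = \Tensor{A}(i_1,\ldots,i_d)$.

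For the concentration step I would fix a multi-index, write $\Matrix{S}^{(k)} = \Matrix{I}_m + \Matrix{E}^{(k)}$, and expand $\prod_{k}(\Matrix{I}_m + \Matrix{E}^{(k)})$. The all-identity term reproduces $\Tensor{A}(i_1,\ldots,i_d)$, and each of the remaining $2^{d-1}-1$ terms, indexed by a nonempty $S \subseteq [d-1]$, collapses --- via the Kronecker deltas left by the identity factors --- into a chained product of quadratic forms in the independent blocks $\{\Matrix{E}^{(k)}\}_{k\in S}$ whose input and output vectors are Hadamard products of consecutive rows $\Matrix{A}_j(i_j,:)$; by the elementary bound $|v(\alpha)| \leq \Norm{v}{2}$ used repeatedly, the $\ell_2$-norms of all these vectors multiply to at most $\prod_j\Norm{\Matrix{A}_j}{2,\infty}$. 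Each $|S|=1$ term is exactly $\Matrix{p}_k^\trans\big(\Matrix{R}^{(k)}\Matrix{R}^{(k)\trans} - \Matrix{I}_m\big)\Matrix{q}_k$, the object estimated by the Hanson--Wright inequality, with sub-exponential tails at scale $r^{-1/2}\Norm{\Matrix{p}_k}{2}\Norm{\Matrix{q}_k}{2}$, and there are $d-1$ of them; each $|S|=s\geq2$ term is handled by conditioning on all but one block of randomness and applying Hanson--Wright to the remaining one, the key point being to propagate the \emph{expected} $\ell_2$-size of the partially contracted random vectors (which gains a factor $r^{-1/2}$ per contracted block) instead of the operator norms of the $\Matrix{E}^{(k)}$, which would bring back a dependence on $m$. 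Summing the resulting sub-exponential tail bounds yields, for $r$ above a $d$-dependent threshold,
\begin{equation*}
    \Probability\Big\{ \big|\Tensor{G}(i_1,\ldots,i_d) - \Tensor{A}(i_1,\ldots,i_d)\big| > \varepsilon\prod\nolimits_j\Norm{\Matrix{A}_j}{2,\infty} \Big\} \leq 2\exp\big(-\varepsilon^2 r / c_d\big),
\end{equation*}
with $c_d$ absorbing both the number of terms and the deteriorating chaos constants, which is why the provable $c_d$ can be as large as $d^d$.

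A union bound over the $n_1\cdots n_d$ entries then shows that with $r = \lceil c_d\varepsilon^{-2}\log(2e\,n_1\cdots n_d)\rceil$ the event $\Norm{\Tensor{A} - \Tensor{G}}{\max} \leq \varepsilon\prod_j\Norm{\Matrix{A}_j}{2,\infty}$ has positive probability; any realization of $\Matrix{R}^{(1)},\ldots,\Matrix{R}^{(d-1)}$ in that event gives a deterministic $\Tensor{G}$ with the claimed TT rank and error. I expect the genuine difficulty to be precisely the higher-order ($|S|\geq2$) part of the concentration: a black-box bound for an order-$2(d-1)$ Gaussian chaos is far too lossy --- it would force $r$ to scale like a $(d-1)$-st power of $\log(n_1\cdots n_d)/\varepsilon^2$ --- so one must exploit that the chaos factors into $d-1$ independent Hanson--Wright quadratic forms with $\ell_2$-scales bounded by $\prod_j\Norm{\Matrix{A}_j}{2,\infty}$ and push the sub-exponential tails through the conditioning carefully; all of the $d$-dependence of $c_d$ is concentrated in that bookkeeping.
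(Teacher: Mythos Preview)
The paper does not prove this statement: it is imported verbatim as \cite[Corollary~1.8]{budzinskiy2025entrywise} and used as a black box throughout \Cref{sec:tensors}. There is therefore no in-paper proof to compare your proposal against.

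That said, your sketch is consistent with the one remark the paper does make about the provenance of the result: just after \cref{theorem:compression}, the authors note that a proof of the matrix case based on the Hanson--Wright inequality appeared in \cite{budzinskiy2024distance} and was ``further extended to tensor-train (TT) approximation of tensors in \cite{budzinskiy2025entrywise}.'' Your plan---insert independent Gaussian matrices into the $d-1$ bonds of the canonical TT representation of a CP tensor, check unbiasedness, prove entrywise concentration via Hanson--Wright, and finish with a union bound over $n_1\cdots n_d$ entries---matches that description, and the footnote about $c_d$ growing ``at least as $d^d$'' is compatible with the combinatorial blow-up you anticipate from the $2^{d-1}-1$ cross terms and the deteriorating chaos constants.

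The part you correctly flag as the genuine difficulty---the $|S|\geq 2$ terms---is only a heuristic in your write-up. ``Condition on all but one block, apply Hanson--Wright, propagate expected $\ell_2$-sizes'' is the right instinct, but as stated it does not yet yield a tail bound: you would need to control the conditional sub-exponential norms uniformly over a high-probability event for the remaining blocks, and then union-bound over those events as well, which is exactly where the $d$-dependent constants accumulate. Turning that into a rigorous argument is the substance of the cited reference, not something the present paper supplies.
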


\begin{lemma}
\label{lemma:compression_hadamard_tensor}
Let $\varepsilon \in (0,1)$, $n_1, \ldots, n_d \in \N$, and $r \in \N$ be given by \cref{eq:compression_rank_tensor}. For every $t \in \N$, any $m_1, \ldots, m_t \in \N$, and any $\Tensor{A}_1, \ldots, \Tensor{A}_t \in \Real^{n_1 \times \cdots \times n_d}$ with CP decompositions $\Tensor{A}_{s} = \llbracket \Matrix{A}^{(s)}_1, \ldots, \Matrix{A}^{(s)}_d \rrbracket$ of length $m_s$, there exists $\Tensor{G} \in \Real^{n_1 \times \cdots \times n_d}$ of $\ttrank{\Tensor{G}} \preccurlyeq r$~such~that
\begin{equation*}
    \Norm{\Tensor{A}_1 \hap \cdots \hap \Tensor{A}_t - \Tensor{G}}{\max} \leq \varepsilon \prod_{s = 1}^{t} \prod_{j = 1}^{d} \Norm{\Matrix{A}^{(s)}_j}{2,\infty}.
\end{equation*}
\end{lemma}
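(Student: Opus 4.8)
The plan is to mirror the proof of \cref{lemma:compression_hadamard} at the tensor level, using \cref{theorem:compression_tensor} in place of \cref{theorem:compression}. The key algebraic fact is that the Hadamard product of CP tensors is again a CP tensor whose factor matrices are obtained by a row-wise Kronecker product (a transposed Khatri--Rao product) of the original factor matrices, and whose length is the product of the individual lengths.

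First I would record this identity. For each mode $j \in [d]$, set $\tilde{\Matrix{A}}_j \in \Real^{n_j \times (m_1 \cdots m_t)}$ by $\tilde{\Matrix{A}}_j^\trans = (\Matrix{A}^{(1)}_j)^\trans \khrap \cdots \khrap (\Matrix{A}^{(t)}_j)^\trans$, so that, by the definition \cref{eq:khrap}, the $i_j$-th row is the Kronecker product $\tilde{\Matrix{A}}_j(i_j,:) = \Matrix{A}^{(1)}_j(i_j,:) \krp \cdots \krp \Matrix{A}^{(t)}_j(i_j,:)$. Writing out the defining sums of the CP decompositions of $\Tensor{A}_1, \ldots, \Tensor{A}_t$, multiplying them entrywise, and collecting the $t$ summation indices into a single multi-index in $[m_1] \times \cdots \times [m_t]$ shows that $\Tensor{A}_1 \hap \cdots \hap \Tensor{A}_t = \llbracket \tilde{\Matrix{A}}_1, \ldots, \tilde{\Matrix{A}}_d \rrbracket$, a CP decomposition of length $m_1 \cdots m_t$.

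Next I would bound the factor norms. Since each row of $\tilde{\Matrix{A}}_j$ is a Kronecker product of rows, $\Norm{\tilde{\Matrix{A}}_j(i_j,:)}{2} = \prod_{s=1}^{t} \Norm{\Matrix{A}^{(s)}_j(i_j,:)}{2}$, hence $\Norm{\tilde{\Matrix{A}}_j}{2,\infty} \leq \prod_{s=1}^{t} \Norm{\Matrix{A}^{(s)}_j}{2,\infty}$. Then I would apply \cref{theorem:compression_tensor} to the CP tensor $\llbracket \tilde{\Matrix{A}}_1, \ldots, \tilde{\Matrix{A}}_d \rrbracket$ of length $m_1 \cdots m_t$, with $r$ given by \cref{eq:compression_rank_tensor}, to obtain $\Tensor{G} \in \Real^{n_1 \times \cdots \times n_d}$ with $\ttrank{\Tensor{G}} \preccurlyeq r$ and $\Norm{\Tensor{A}_1 \hap \cdots \hap \Tensor{A}_t - \Tensor{G}}{\max} \leq \varepsilon \prod_{j=1}^{d} \Norm{\tilde{\Matrix{A}}_j}{2,\infty} \leq \varepsilon \prod_{j=1}^{d} \prod_{s=1}^{t} \Norm{\Matrix{A}^{(s)}_j}{2,\infty}$, which is exactly the asserted bound.

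There is no real obstacle here: the only point requiring a moment of care is verifying the row-wise Kronecker identity for the Hadamard product of CP tensors, and checking that the paper's column-wise Khatri--Rao convention \cref{eq:khrap} yields precisely this row structure upon transposition (so that the $\Norm{\cdot}{2,\infty}$ norms multiply). Everything else is a direct transcription of the matrix argument.
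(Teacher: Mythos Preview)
Your proposal is correct and follows the same approach as the paper: define $\tilde{\Matrix{A}}_j^\trans = (\Matrix{A}^{(1)}_j)^\trans \khrap \cdots \khrap (\Matrix{A}^{(t)}_j)^\trans$ so that $\Tensor{A}_1 \hap \cdots \hap \Tensor{A}_t = \llbracket \tilde{\Matrix{A}}_1, \ldots, \tilde{\Matrix{A}}_d \rrbracket$, then apply \cref{theorem:compression_tensor} together with the bound $\Norm{\tilde{\Matrix{A}}_j}{2,\infty} \leq \prod_{s=1}^{t} \Norm{\Matrix{A}^{(s)}_j}{2,\infty}$. Your write-up is in fact more detailed than the paper's, which simply refers back to the proof of \cref{lemma:compression_hadamard}.
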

\begin{proof}
As in the proof of \cref{lemma:compression_hadamard}, observe that $\Tensor{A}_1 \hap \cdots \hap \Tensor{A}_t = \llbracket \tilde{\Matrix{A}}_1, \ldots, \tilde{\Matrix{A}}_d \rrbracket$ with
\begin{equation*}
    \tilde{\Matrix{A}}_j^\trans = [\Matrix{A}^{(1)}_j]^\trans \khrap \cdots \khrap [\Matrix{A}^{(t)}_j]^\trans, \quad 1 \leq j \leq d.
\end{equation*}
Apply \cref{theorem:compression_tensor} and use $\Norm{\tilde{\Matrix{A}}_j}{2,\infty} \leq \prod_{s = 1}^{t} \Norm{\Matrix{A}^{(s)}_j}{2,\infty}$.
\end{proof}

\begin{lemma}
\label{lemma:1d_function_dotp_tensor}
Let $\varepsilon \in (0,1)$, $n_1, \ldots, n_d \in \N$, and $r \in \N$ be given by \cref{eq:compression_rank_tensor}. Let $h : \Omega \to \Real$ satisfy \cref{assumption:1d_smooth} with $0 \in \Omega$. For any $t,m \in \N$ and every $\Tensor{A} \in \Real^{n_1 \times \cdots \times n_d}$ with a CP decomposition $\Tensor{A} = \llbracket \Matrix{A}_1, \ldots, \Matrix{A}_d \rrbracket$ of length $m$ such that $\mathcal{I} = [\min\{\Tensor{A}\}, \max\{\Tensor{A}\}] \subseteq \Omega$, there exists $\Tensor{G} \in \Real^{n_1 \times \cdots \times n_d}$ of $\ttrank{\Tensor{G}} \preccurlyeq 1 + (t-1)r$ such that
\begin{equation*}
    \Norm{h(\Tensor{A}) - \Tensor{G}}{\max} \leq \frac{\Norm{\Tensor{A}}{\max}^t}{t!} \Norm{h^{(t)}}{\Ell_\infty(\Omega)} + \varepsilon \sum_{s = 1}^{t-1} \frac{|h^{(s)}(0)|}{s!} \prod_{j = 1}^{d} \Norm{\Matrix{A}_j}{2,\infty}^s.
\end{equation*}
\end{lemma}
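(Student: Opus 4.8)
The plan is to follow the proof of \cref{lemma:1d_function_dotp} line by line, substituting the tensor compression result \cref{lemma:compression_hadamard_tensor} for its matrix analogue \cref{lemma:compression_hadamard} and replacing subadditivity of the matrix rank with subadditivity of the TT rank. First I would form the degree-$(t-1)$ Taylor polynomial of $h$ at the origin, $h_t(x) = \sum_{s=0}^{t-1} \frac{h^{(s)}(0)}{s!} x^s$. Because $0 \in \Omega$ and $\mathcal{I} \subseteq \Omega$ is an interval, the Lagrange form of the remainder places the intermediate point $\xi$ between $0$ and $x$ inside $\Omega$ for every $x \in \mathcal{I}$, so that $|h(x) - h_t(x)| = |h^{(t)}(\xi)|\,|x|^t / t! \le \Norm{h^{(t)}}{\Ell_\infty(\Omega)}\,\Norm{\Tensor{A}}{\max}^t / t!$; as in the matrix case, the supremum of $|h^{(t)}|$ has to be taken over $\Omega$ rather than over $\mathcal{I}$, since the origin may miss $\mathcal{I}$. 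Evaluating $h_t$ entrywise on $\Tensor{A}$ yields $h_t(\Tensor{A}) = \sum_{s=0}^{t-1} \frac{h^{(s)}(0)}{s!} \Tensor{A}^{\hap s}$, where $\Tensor{A}^{\hap s}$ is the $s$-fold Hadamard power of $\Tensor{A}$.

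Next I would approximate the summands separately. The $s=0$ term is the constant tensor with all entries equal to $h(0)$, which is rank-one and hence has TT rank $\preccurlyeq 1$. For $s \in [t-1]$ I would write $\Tensor{A}^{\hap s} = \Tensor{A} \hap \cdots \hap \Tensor{A}$ ($s$ factors), each factor equipped with the given CP decomposition $\llbracket \Matrix{A}_1, \ldots, \Matrix{A}_d \rrbracket$ of length $m$, and apply \cref{lemma:compression_hadamard_tensor} with $s$ in the role of $t$: this produces $\Tensor{G}_s$ with $\ttrank{\Tensor{G}_s} \preccurlyeq r$ and $\Norm{\Tensor{A}^{\hap s} - \Tensor{G}_s}{\max} \le \varepsilon \prod_{j=1}^d \Norm{\Matrix{A}_j}{2,\infty}^s$. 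I would then set $\Tensor{G}$ equal to the constant tensor $h(0)$ plus $\sum_{s=1}^{t-1} \frac{h^{(s)}(0)}{s!} \Tensor{G}_s$; since the TT rank of a sum is bounded componentwise by the sum of the TT ranks — obtained by block-concatenating the TT cores \cite{oseledets2011tensor} — this $\Tensor{G}$ satisfies $\ttrank{\Tensor{G}} \preccurlyeq 1 + (t-1)r$.

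It remains to combine the two error contributions via the triangle inequality $\Norm{h(\Tensor{A}) - \Tensor{G}}{\max} \le \Norm{h(\Tensor{A}) - h_t(\Tensor{A})}{\max} + \Norm{h_t(\Tensor{A}) - \Tensor{G}}{\max}$: the first term is at most $\Norm{h - h_t}{\Ell_\infty(\mathcal{I})}$ because all entries of $\Tensor{A}$ lie in $\mathcal{I}$, and the second is at most $\varepsilon \sum_{s=1}^{t-1} \frac{|h^{(s)}(0)|}{s!} \prod_{j=1}^d \Norm{\Matrix{A}_j}{2,\infty}^s$ by the triangle inequality over the summands and the bounds of the previous paragraph. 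The proof is essentially a transcription of the matrix argument, so I do not expect a genuine obstacle; the only tensor-specific ingredient is the componentwise subadditivity of the TT rank under addition, which is standard.
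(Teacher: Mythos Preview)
Your proposal is correct and follows exactly the approach of the paper, which simply instructs the reader to repeat the proof of \cref{lemma:1d_function_dotp} with \cref{lemma:compression_hadamard_tensor} in place of \cref{lemma:compression_hadamard} and to use the componentwise subadditivity of the TT rank under addition \cite{oseledets2011tensor}. Your write-up is in fact more detailed than the paper's one-line proof and contains no gaps.
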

\begin{proof}
    Repeat the proof of \cref{lemma:1d_function_dotp} using \cref{lemma:compression_hadamard_tensor} and note that the TT rank of a sum of two tensors is upper bounded by the sum of their TT ranks \cite{oseledets2011tensor}.
\end{proof}

\begin{corollary}
\label{corollary:1d_function_dotp_tensor}
In the setting of \cref{lemma:1d_function_dotp_tensor}, let $h : \Omega \to \Real$ satisfy \cref{assumption:1d_analytic}. If $\max_{j \in [d]}\{\Norm{\Matrix{A}_j}{2,\infty}\} \leq R$ then, for $t \geq e^2 M R^d$, 
\begin{equation*}
    \Norm{h(\Tensor{A}) - \Tensor{G}}{\max} \leq C \Big[ e^{-t} + \varepsilon (e^{M R^d} - 1) \Big].
\end{equation*}
\end{corollary}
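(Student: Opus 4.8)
The plan is to mirror the proof of \cref{corollary:1d_function_dotp} almost verbatim, replacing the matrix error bound of \cref{lemma:1d_function_dotp} by the tensor error bound of \cref{lemma:1d_function_dotp_tensor}. The only genuinely new ingredient is an estimate of $\Norm{\Tensor{A}}{\max}$ and of the products $\prod_{j=1}^d \Norm{\Matrix{A}_j}{2,\infty}^s$ in terms of $R$.

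First I would bound the entries of $\Tensor{A}$. Each entry $\Tensor{A}(i_1, \ldots, i_d) = \sum_{\alpha=1}^m \prod_{j=1}^d \Matrix{A}_j(i_j, \alpha)$ is the higher-order inner product \cref{eq:hoip} of the rows $\Matrix{A}_j(i_j,:)$. Applying the triangle inequality followed by the generalized H\"older inequality with all $d$ exponents equal to $d$ gives $|\Tensor{A}(i_1, \ldots, i_d)| \leq \sum_\alpha \prod_j |\Matrix{A}_j(i_j, \alpha)| \leq \prod_j \Norm{\Matrix{A}_j(i_j,:)}{d}$. Since $d \geq 2$, the norm monotonicity $\Norm{v}{d} \leq \Norm{v}{2}$ applies, so $|\Tensor{A}(i_1, \ldots, i_d)| \leq \prod_j \Norm{\Matrix{A}_j(i_j,:)}{2} \leq \prod_j \Norm{\Matrix{A}_j}{2,\infty} \leq R^d$; taking the maximum over all index tuples yields $\Norm{\Tensor{A}}{\max} \leq R^d$, and the bound $\Norm{\Matrix{A}_j}{2,\infty} \leq R$ similarly gives $\prod_j \Norm{\Matrix{A}_j}{2,\infty}^s \leq R^{ds}$ for every $s$.

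Next I would substitute these estimates into the two-term bound of \cref{lemma:1d_function_dotp_tensor} together with \cref{assumption:1d_analytic}, which provides $\Norm{h^{(t)}}{\Ell_\infty(\Omega)} \leq C M^t$ and $|h^{(s)}(0)| \leq C M^s$. The analytical term becomes $\frac{R^{dt}}{t!} C M^t = C \frac{(M R^d)^t}{t!}$; Stirling's inequality $t! > (t/e)^t$ turns this into $C (e M R^d / t)^t$, which is at most $C e^{-t}$ exactly once $t \geq e^2 M R^d$, as assumed on $t$. The algebraic term becomes $\varepsilon \sum_{s=1}^{t-1} \frac{C M^s}{s!} R^{ds} \leq \varepsilon C \sum_{s=1}^{\infty} \frac{(M R^d)^s}{s!} = \varepsilon C (e^{M R^d} - 1)$, where the finite sum is bounded by the full exponential series. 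Adding the two contributions gives $\Norm{h(\Tensor{A}) - \Tensor{G}}{\max} \leq C\big[ e^{-t} + \varepsilon(e^{M R^d} - 1) \big]$, as claimed.

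I do not expect any real obstacle: the argument is a routine specialization of \cref{corollary:1d_function_dotp}. The one point needing minor care is the bound $\Norm{\Tensor{A}}{\max} \leq R^d$, where for $d = 2$ one recovers the Cauchy--Schwarz step used before, but for $d \geq 3$ one must invoke the generalized H\"older inequality together with $\Norm{v}{d} \leq \Norm{v}{2}$; the remaining steps (Stirling's formula, summing the exponential series) are identical to the matrix case.
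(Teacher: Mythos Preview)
Your proposal is correct and follows exactly the approach of the paper, which simply instructs to use \cref{lemma:1d_function_dotp_tensor}, proceed as in \cref{corollary:1d_function_dotp}, and note that $\Norm{\Tensor{A}}{\max} \leq R^d$. You even supply a careful justification of this last inequality via the generalized H\"older inequality and $\Norm{v}{d} \leq \Norm{v}{2}$, which the paper leaves implicit.
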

\begin{proof}
Use \cref{lemma:1d_function_dotp_tensor} and proceed as in \cref{corollary:1d_function_dotp}, noting that $\Norm{\Tensor{A}}{\max} \leq R^d$.
\end{proof}

\begin{theorem}
\label{theorem:1d_function_tensor}
Let $\varepsilon \in (0,1)$, $n_1, \ldots, n_d \in \N$, and $r \in \N$ be given by \cref{eq:compression_rank_tensor}. Let $h : \Omega \to \Real$ satisfy \cref{assumption:1d_analytic} with $[-R^d, R^d] \subseteq \Omega$ and $R > 0$. For every $m \in \N$, every $\Tensor{F}~\in~\Real^{n_1 \times \cdots \times n_d}$ generated with $f(\Point{x}^{(1)}, \ldots, \Point{x}^{(d)}) = h(\langle \Point{x}^{(1)}, \ldots, \Point{x}^{(d)} \rangle)$ based on points from $\overline{\ball}_{m,R}$, and every integer $t \geq e^2 M R^d$, there exists $\Tensor{G} \in \Real^{n_1 \times \cdots \times n_d}$ of $\ttrank{\Tensor{G}} \preccurlyeq 1 + (t-1)r$ such that
\begin{equation*}
    \Norm{\Tensor{F} - \Tensor{G}}{\max} \leq C \Big[ e^{-t} + \varepsilon (e^{M R^d} - 1) \Big].
\end{equation*}
\end{theorem}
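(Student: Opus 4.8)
The plan is to reduce the statement directly to \cref{corollary:1d_function_dotp_tensor} by writing $\Tensor{F}$ as the entrywise application of $h$ to the CP tensor of higher-order inner products. First I would collect the sampling points into matrices $\Matrix{X}_j \in \Real^{n_j \times m}$ whose rows are $\Point{x}^{(j)}_1, \ldots, \Point{x}^{(j)}_{n_j}$, and form $\Tensor{P} = \llbracket \Matrix{X}_1, \ldots, \Matrix{X}_d \rrbracket$ exactly as in \cref{eq:cp}, so that $\Tensor{P}(i_1, \ldots, i_d) = \langle \Point{x}^{(1)}_{i_1}, \ldots, \Point{x}^{(d)}_{i_d} \rangle$ and hence $\Tensor{F} = h(\Tensor{P})$. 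This $\Tensor{P}$ is a CP decomposition of length $m$, which is precisely the input format required by \cref{corollary:1d_function_dotp_tensor}.

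Second, I would verify the hypotheses of \cref{corollary:1d_function_dotp_tensor} for $\Tensor{A} = \Tensor{P}$. Since every sampling point lies in $\overline{\ball}_{m,R}$, each row of $\Matrix{X}_j$ has Euclidean norm at most $R$, so $\Norm{\Matrix{X}_j}{2,\infty} \leq R$ for all $j \in [d]$, and in particular $\max_{j \in [d]} \Norm{\Matrix{X}_j}{2,\infty} \leq R$. The same estimate together with the Cauchy--Schwarz-type bound $|\langle \Point{x}^{(1)}, \ldots, \Point{x}^{(d)} \rangle| \leq \prod_{j=1}^{d} \Norm{\Point{x}^{(j)}}{2}$ (the very bound invoked inside the proof of \cref{corollary:1d_function_dotp_tensor}) gives $\Norm{\Tensor{P}}{\max} \leq R^d$, so that $\mathcal{I} = [\min\{\Tensor{P}\}, \max\{\Tensor{P}\}] \subseteq [-R^d, R^d] \subseteq \Omega$ and \cref{assumption:1d_analytic} is in force on $\mathcal{I}$.

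Third, I would apply \cref{corollary:1d_function_dotp_tensor} with this $R$ and the given integer $t \geq e^2 M R^d$: it produces $\Tensor{G}$ with $\ttrank{\Tensor{G}} \preccurlyeq 1 + (t-1)r$ satisfying $\Norm{\Tensor{F} - \Tensor{G}}{\max} = \Norm{h(\Tensor{P}) - \Tensor{G}}{\max} \leq C\big[e^{-t} + \varepsilon(e^{MR^d} - 1)\big]$, which is exactly the claim. There is essentially no hard step: the analytic part (truncating the Taylor series of $h$ and bounding the remainder via Stirling's formula) and the algebraic part (compressing the Hadamard powers of the length-$m$ CP tensor into a TT tensor of controlled rank via \cref{theorem:compression_tensor,lemma:compression_hadamard_tensor}) were already carried out in \cref{lemma:1d_function_dotp_tensor,corollary:1d_function_dotp_tensor}. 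The only point worth emphasizing is the bookkeeping that the CP length $m$ of $\Tensor{P}$, which feeds into \cref{theorem:compression_tensor}, does not appear in the final TT rank bound, since \cref{theorem:compression_tensor} is dimension-free in $m$; this dimension independence is the whole point of the construction and mirrors the matrix case of \cref{theorem:1d_function_dotp_gen}.
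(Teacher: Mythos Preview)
Your proposal is correct and follows essentially the same approach as the paper: introduce the CP tensor $\Tensor{P} = \llbracket \Matrix{X}_1, \ldots, \Matrix{X}_d \rrbracket$ from \cref{eq:cp}, observe that $\Tensor{F} = h(\Tensor{P})$, and apply \cref{corollary:1d_function_dotp_tensor}. The paper's own proof is a single sentence to this effect; you have merely spelled out the hypothesis checks (that $\Norm{\Matrix{X}_j}{2,\infty} \leq R$ and $\mathcal{I} \subseteq [-R^d, R^d] \subseteq \Omega$) which the paper leaves implicit.
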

\begin{proof}
Apply \cref{corollary:1d_function_dotp_tensor} to the tensor $\Tensor{P} = \llbracket \Matrix{X}_1, \ldots, \Matrix{X}_d \rrbracket$ introduced in \cref{eq:cp}.
\end{proof}

Similar to matrices, we can choose $t = \lceil \log(\varepsilon^{-1}) \rceil$ to guarantee the existence of a tensor $\Tensor{G}$ of $\ttrank{\Tensor{G}} = \mathcal{O}(\log(n_1 \ldots n_d) \varepsilon^{-2} \log(\varepsilon^{-1}))$ that achieves the error $\Norm{\Tensor{F} - \Tensor{G}}{\max} = \mathcal{O}(\varepsilon)$. The TT rank bound is independent of the dimension $m$ and deteriorates with the growth of the number of modes $d$ because of the constant $c_d$ in \cref{theorem:compression_tensor}.
\section{Numerical experiments}
\label{sec:numerical}
\subsection{Algorithm}
While the truncated singular value decomposition (SVD) of a matrix provides its optimal low-rank approximation in the Frobenius norm, optimal low-rank approximation in the maximum norm is substantially more difficult---to the extent that the problem is NP-hard even in the simplest rank-one case \cite{gillis2019low, zamarashkin2022best, morozov2023optimal}. It might still be possible to find ``good'' entrywise approximations, and we focus on a heuristic method based on \emph{alternating projections} as developed in \cite[\S 7.5]{budzinskiy2023quasioptimal} and further used in \cite{budzinskiy2024distance, budzinskiy2025entrywise}. This approach applies to both matrices and tensors and, to the best of our knowledge, is the only currently available specialized algorithm of entrywise low-rank approximation for tensors. Find an overview of the known matrix algorithms in \cite{budzinskiy2024distance}.

The method of alternating projections aims to find a point in the intersection of two sets by computing successive projections in the Frobenius norm. Consider the sets of low-rank tensors and tensors that are close to $\Tensor{F}$ in the maximum norm:
\begin{equation*}
    \mathbb{M}_{\bm{r}} = \set{\Tensor{T}}{\ttrank{\Tensor{T}} = \bm{r}}, \quad \mathbb{B}_\varepsilon(\Matrix{F}) = \set{\Tensor{E}}{\Norm{\Tensor{F} - \Tensor{E}}{\max} \leq \varepsilon}.
\end{equation*}
The projection of $\Tensor{T}$ onto $\mathbb{B}_\varepsilon(\Tensor{F})$ amounts to clipping the entries of $\Tensor{T} - \Tensor{F}$ whose absolute value exceeds $\varepsilon$. The projection onto $\mathbb{M}_{\bm{r}}$ is given by the truncated SVD for matrices. It cannot be computed for tensors, but the TT-SVD algorithm \cite{oseledets2009breaking, oseledets2011tensor} acts as a quasi-optimal low-rank projection, and the iterations of quasi-optimal alternating projections with TT-SVD instead of the optimal projection still converge \cite{sultonov2023low,budzinskiy2023quasioptimal} and can be accelerated with randomized low-rank projections \cite{matveev2023sketching, sultonov2023low}. In \cite{budzinskiy2023quasioptimal, budzinskiy2025entrywise, budzinskiy2024distance}, (quasi-optimal) alternating projections were combined with a binary search over $\varepsilon$ to obtain an upper bound on the optimal low-rank approximation error in the maximum norm for a matrix or tensor.

\subsection{Test functions and experimental settings}

We consider three functions,\footnote{Note the discussion about $f_2$ at the end of \Cref{sec:kernels}.}
\begin{equation*}
    f_1(\Point{x}, \Point{y}) = \exp(-\Norm{\Point{x} - \Point{y}}{2}), \quad f_2(\Point{x}, \Point{y}) = \exp(-\Norm{\Point{x} - \Point{y}}{2}^4), \quad f_3(\Point{x}, \Point{y}, \Point{z}) = \sinh(\langle \Point{x}, \Point{y}, \Point{z} \rangle),
\end{equation*}
and generate $n \times n$ matrices and $n \times n \times n$ tensors by sampling the points $\{ \Point{x}_i \}$, $\{ \Point{y}_i \}$, $\{ \Point{z}_i \}$ uniformly at random from $\ball_m$. With matrices, we use two sampling schemes: independent sampling when $\{ \Point{x}_i \}$ and $\{ \Point{y}_i \}$ are sampled independently, and symmetric sampling when we sample $\{ \Point{x}_i \}$ and set $\Point{y}_i = \Point{x}_i$. With tensors, we use only independent sampling. 

In every numerical experiment, we set the dimension $m$ and the size $n$; choose the sampling scheme and the test function; generate a matrix or tensor from random samples; set the approximation rank $r$ (TT rank ($r$, $r$) for tensors) and run the method of alternating projections with binary search starting from a low-rank initial approximation with random Gaussian factors; normalize the resulting error estimate by the maximum norm of the original matrix or tensor to obtain the relative error estimate $\varepsilon$. We repeat every experiment 5 times and report the median value of $\varepsilon$. For matrices, we additionally compare the results of alternating projections with the entrywise errors achieved by the truncated SVD (again, all random experiments are repeated 5 times); this estimate was used in \cite{udell2019big}.

Our implementation\footnote{The code is available at \url{https://github.com/sbudzinskiy/low-rank-big-data}.} of the method of alternating projections with binary search has a number of hyperparameters that affect its performance and running time. The choice of their values was dictated in large by the latter, so, perhaps, the error estimates can be improved. We restrict the experiments to third-order tensors for the same reasons. Similarly, instead of fixing the desired relative approximation error $\varepsilon$ and studying the $\varepsilon$-rank \cref{eq:epsrank}, we fix the approximation rank $r$ and bound the optimal entrywise error of rank-$r$ approximation, since the second problem is less computationally challenging.

Let us repeat the comment made in \Cref{subsec:datasets}: the values of $n$ and $r$, for which we have enough computational resources to perform experiments, are too low to fall into the regime described by our theory. Therefore, we suggest to view our numerical results as complementary to the theoretical bounds.

\subsection{Varying dimension of the variables}

\begin{figure}[t]
\centering
	\includegraphics[width=0.94\textwidth]{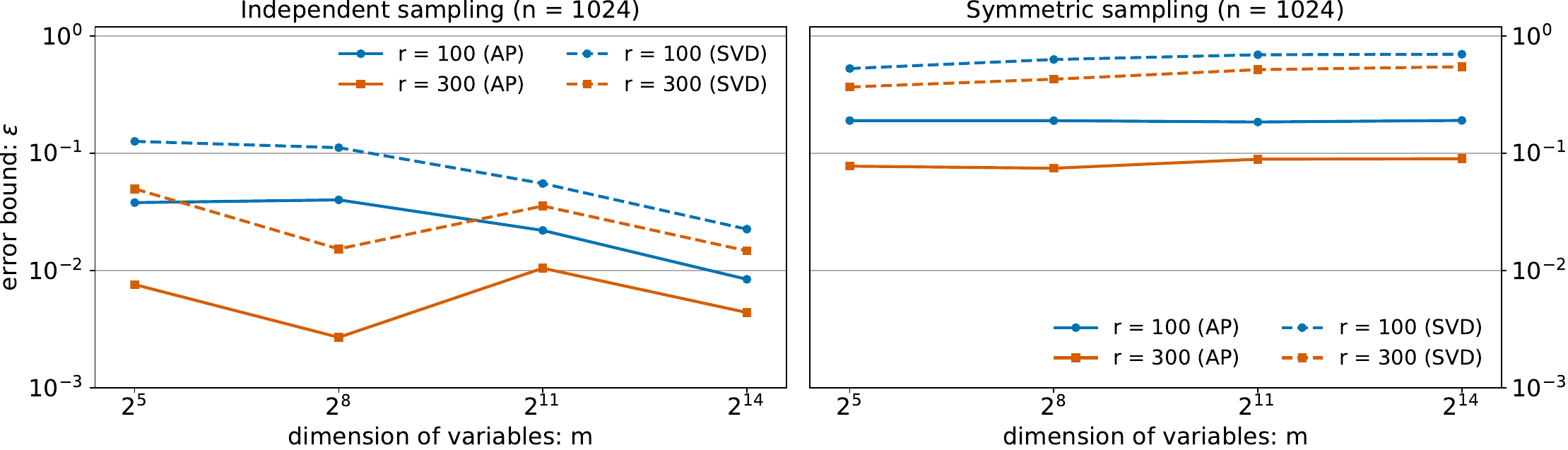}
	\includegraphics[width=0.94\textwidth]{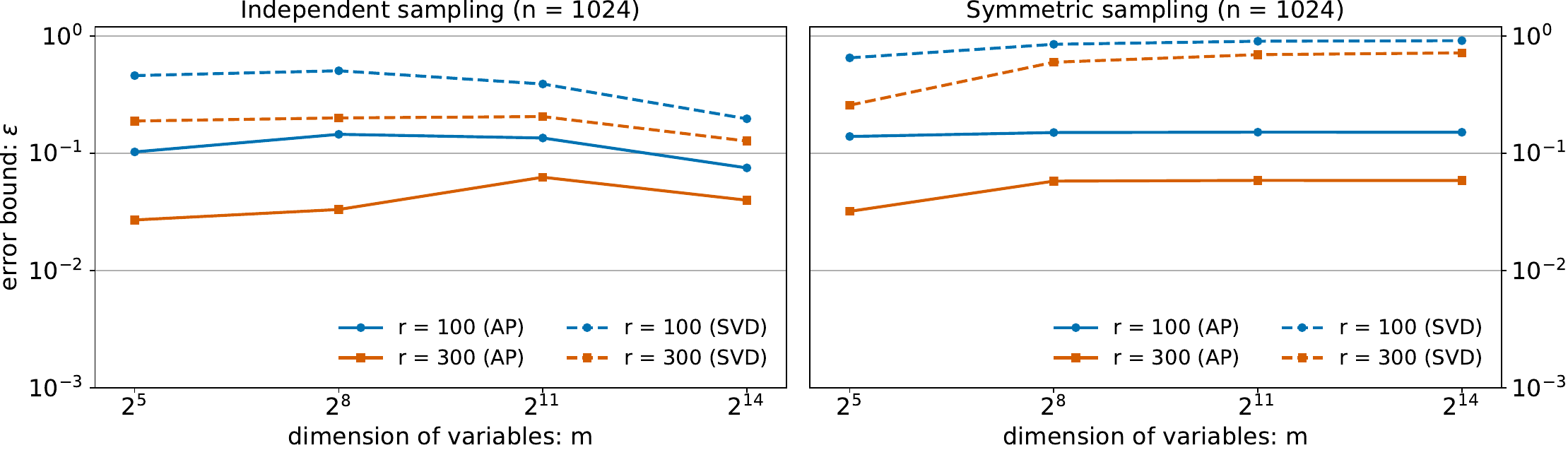}
	\includegraphics[width=0.47\textwidth]{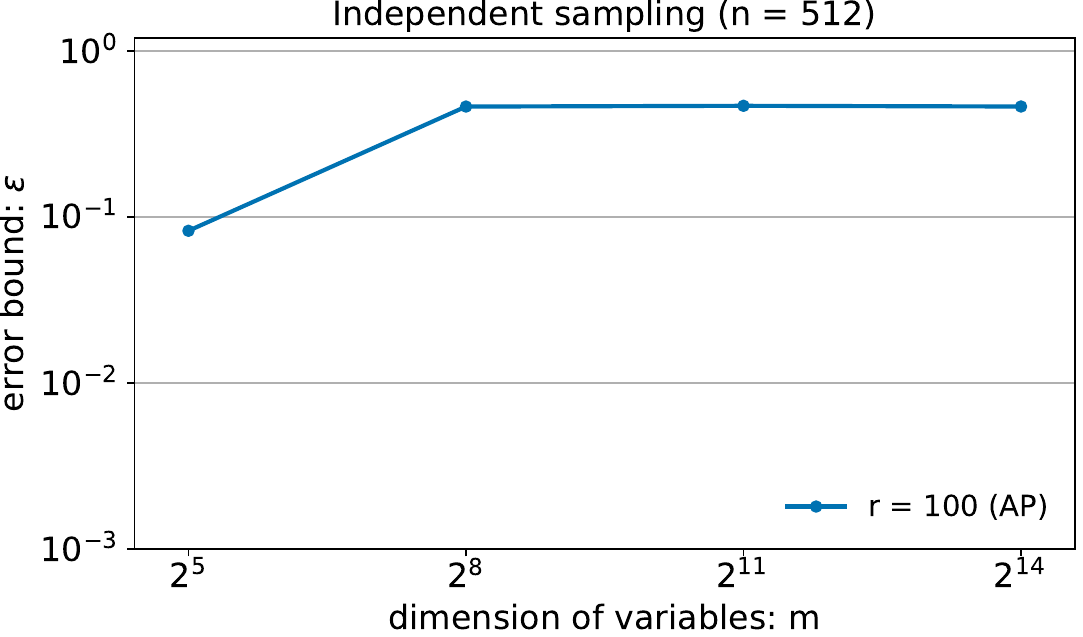}
\caption{Relative errors of low-rank approximation in the maximum norm for function-generated matrices and tensors with varying dimension $m$: (top) $f_1(\Point{x}, \Point{y}) = \exp(-\Norm{\Point{x} - \Point{y}}{2})$, (middle) $f_2(\Point{x}, \Point{y}) = \exp(-\Norm{\Point{x} - \Point{y}}{2}^4)$, (bottom) $f_3(\Point{x}, \Point{y}, \Point{z}) = \sinh(\langle \Point{x}, \Point{y}, \Point{z} \rangle)$.}
\label{fig:varm}
\end{figure}

The first series of experiments illustrates that, for specific classes of generating functions, the dimension $m$ of the variables does not affect the rank bound of entrywise approximation of the corresponding function-generated matrices and tensors. As \cref{fig:varm} shows, the errors remain approximately constant as $m$ is increased. When $m$ is large, matrices generated with $f_1$ and $f_2$ according to the random symmetric sampling scheme behave like identity matrices, which explains why their approximation errors are similar to those reported for identity matrices in \cite{budzinskiy2024distance}. Note that our algorithm achieves approximation errors that are 5-10 times smaller than those of the truncated SVD at the cost of higher computational complexity (recall that the truncated SVD is optimal only for unitarily invariant norms and that the maximum norm is not among them).

\subsection{Varying approximation rank}

\begin{figure}[t]
\centering
	\includegraphics[width=0.97\textwidth]{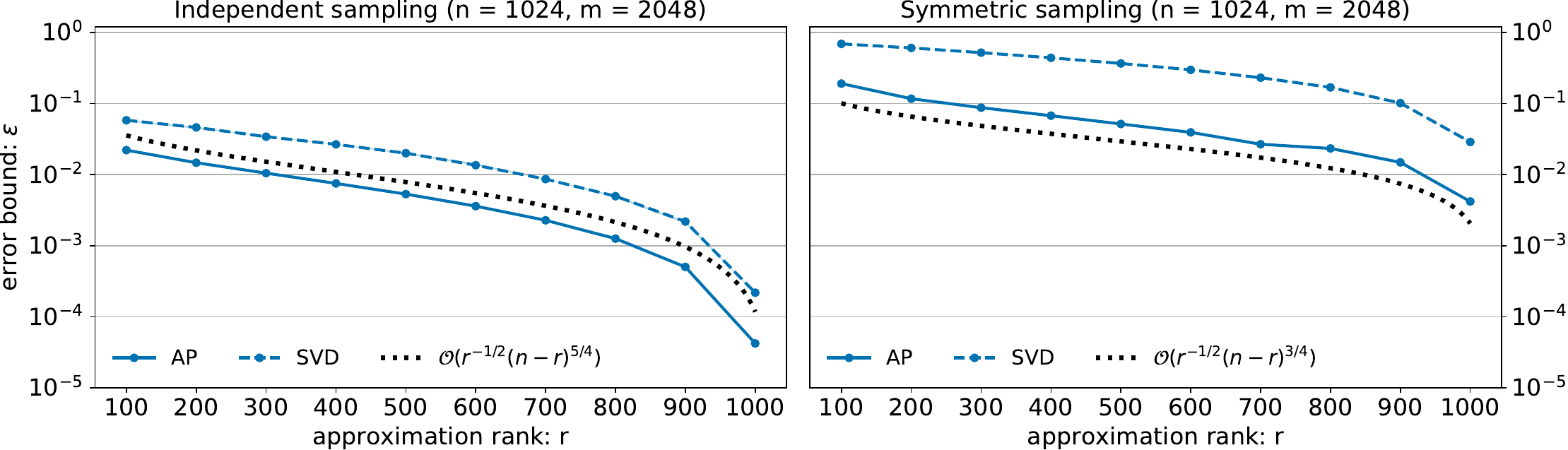}
	\includegraphics[width=0.97\textwidth]{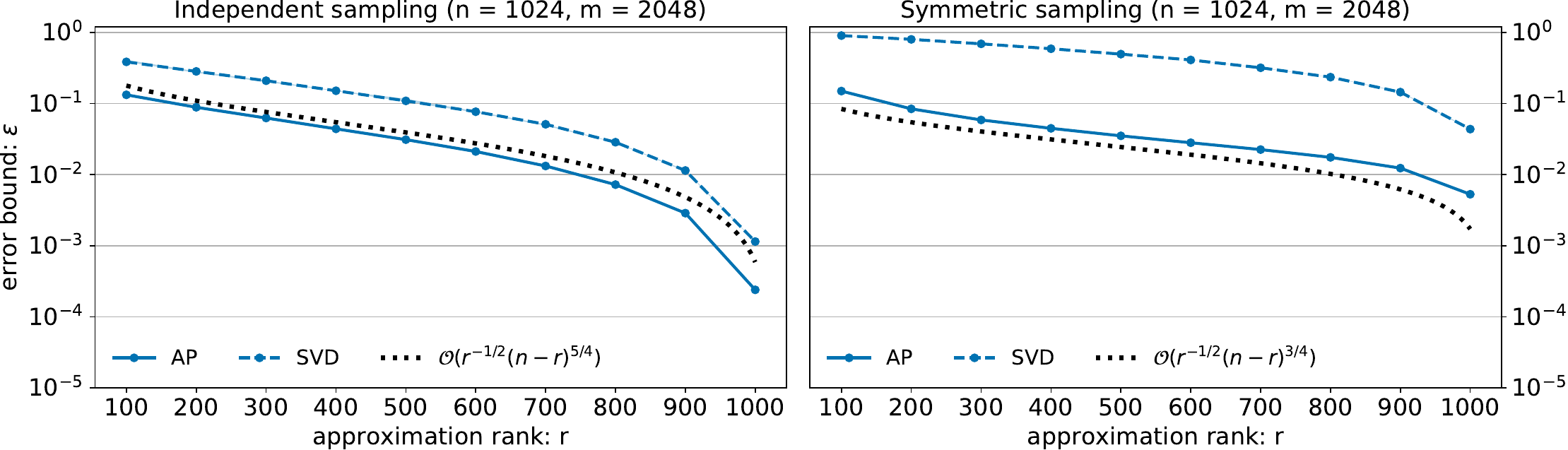}
	\includegraphics[width=0.485\textwidth]{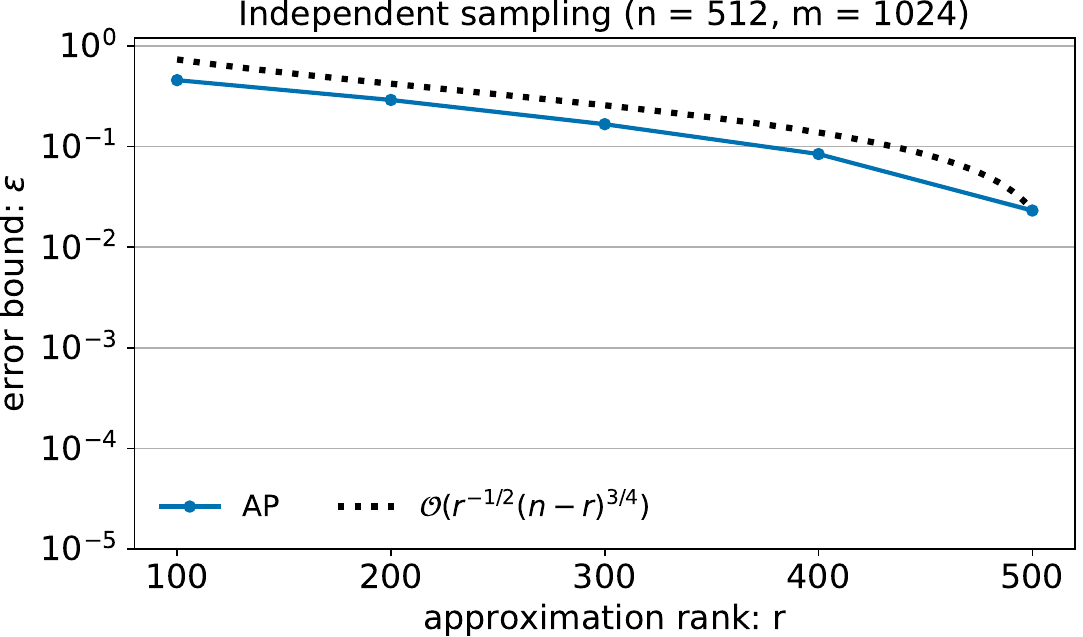}
\caption{Relative errors of low-rank approximation in the maximum norm for function-generated matrices and tensors with varying approximation rank $r$: (top) $f_1(\Point{x}, \Point{y}) = \exp(-\Norm{\Point{x} - \Point{y}}{2})$, (middle) $f_2(\Point{x}, \Point{y}) = \exp(-\Norm{\Point{x} - \Point{y}}{2}^4)$, (bottom) $f_3(\Point{x}, \Point{y}, \Point{z}) = \sinh(\langle \Point{x}, \Point{y}, \Point{z} \rangle)$.}
\label{fig:varr}
\end{figure}

Next, we study the decay of the entrywise errors as the approximation rank $r$ grows. In \cref{fig:varr}, we see that the errors decrease as $\mathcal{O}(r^{-1/2}(n-r)^\alpha)$, where the hidden constant depends on $n$ and $\alpha > 0$ depends on the sampling regime and the order of the tensor. For the values of $r$ that are not too close to the size $n$, the numerically estimated errors decay as $r^{-1/2}$ in accordance with our theoretical guarantee \cref{eq:rank_bound}.

\subsection{Varying size of the matrix or tensor}

\begin{figure}[t]
\centering
	\includegraphics[width=0.97\textwidth]{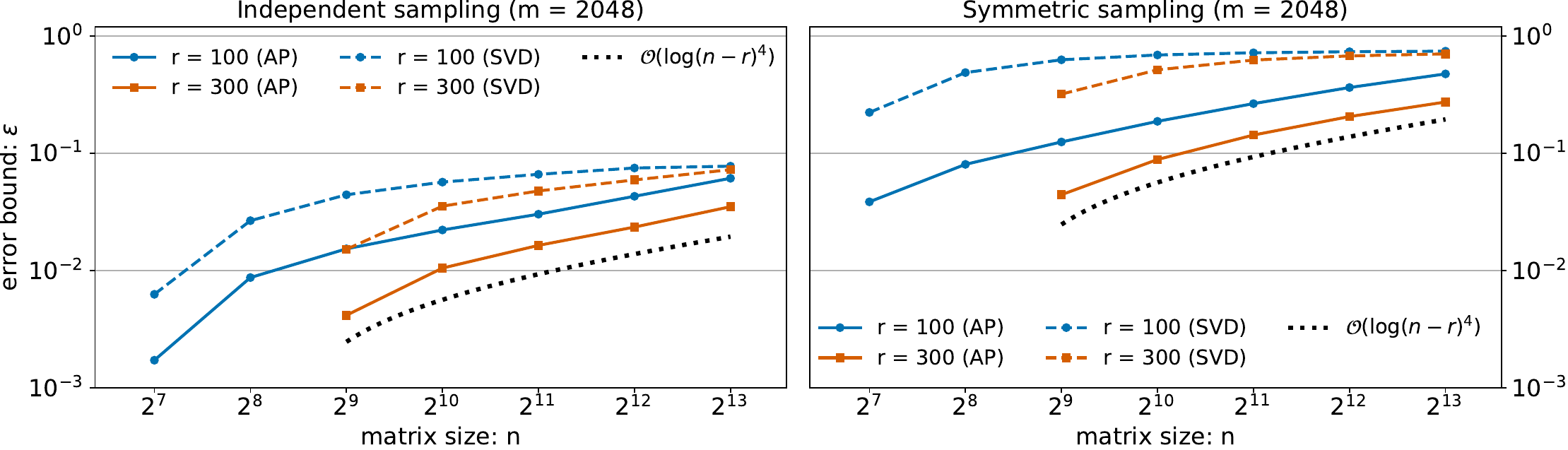}
	\includegraphics[width=0.97\textwidth]{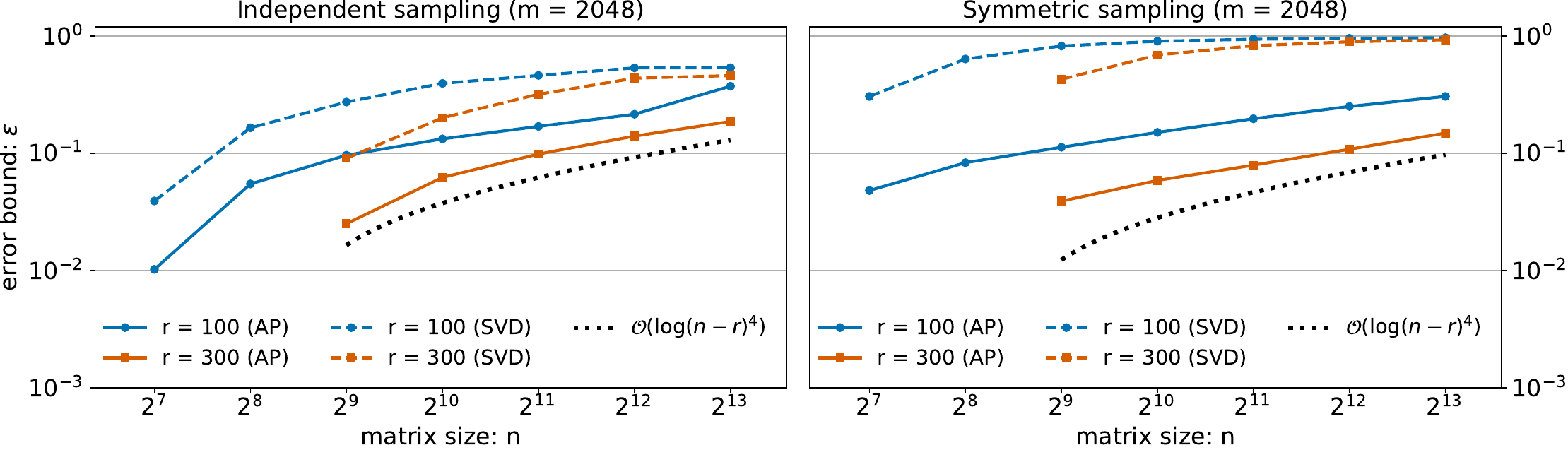}
	\includegraphics[width=0.485\textwidth]{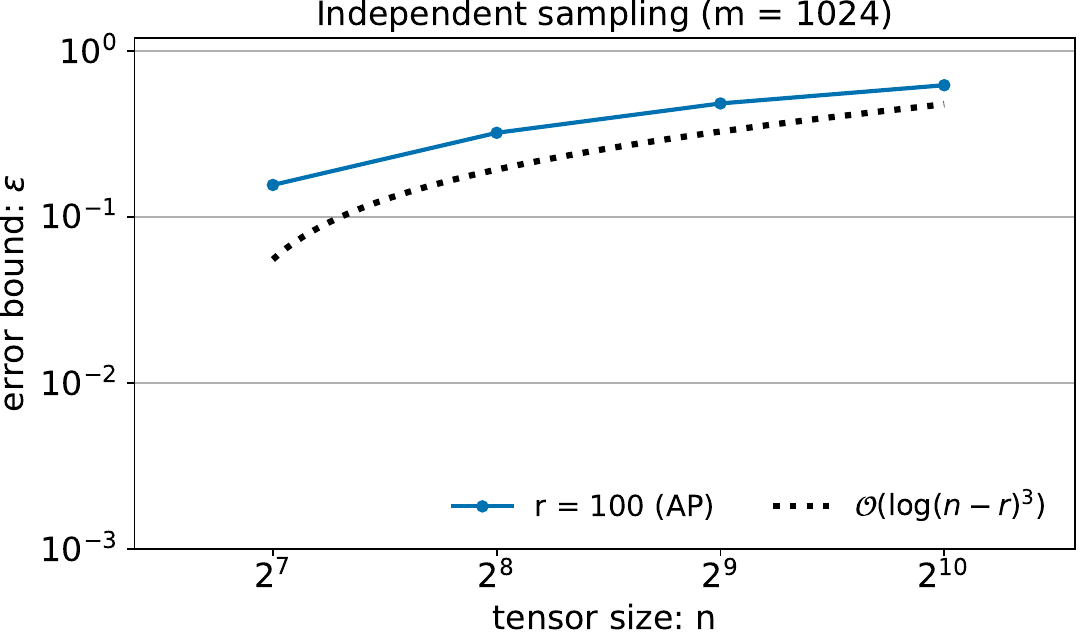}
\caption{Relative errors of low-rank approximation in the maximum norm for function-generated matrices and tensors with varying size $n$: (top) $f_1(\Point{x}, \Point{y}) = \exp(-\Norm{\Point{x} - \Point{y}}{2})$, (middle) $f_2(\Point{x}, \Point{y}) = \exp(-\Norm{\Point{x} - \Point{y}}{2}^4)$, (bottom) $f_3(\Point{x}, \Point{y}, \Point{z}) = \sinh(\langle \Point{x}, \Point{y}, \Point{z} \rangle)$.}
\label{fig:varn}
\end{figure}

Finally, we investigate how the approximation errors depend on the size $n$ of the function-generated matrix or tensor. Our theoretical rank bound \cref{eq:rank_bound} suggests that, for fixed approximation rank $r$, it is possible to achieve an entrywise error of $\mathcal{O}(\sqrt{\log(n) / r})$. On the other hand, there is an ultimate upper bound $\Norm{\Matrix{F}}{\max}$ of the error that is attained at the rank-zero approximation and can be approached with a vanishing sequence of rank-$r$ matrices. Therefore, the error bound $\mathcal{O}(\sqrt{\log(n) / r})$ necessarily becomes meaningless when $r$ is fixed and $n \to \infty$. \cref{fig:varn} shows the growth rate $\mathcal{O}(\mathrm{polylog}(n-r))$ of the approximation error in the intermediate range of values of $n$. We expect that the rate $\mathcal{O}(\sqrt{\log(n) / r})$ could be registered for bigger values of $n$ and $r$ (see \Cref{subsec:datasets}).
\section{Conclusion}
So are ``big data'' approximately low-rank? The question is too vague to have a concise, definite answer. In this article, we presented three classes of function-generated matrices and a class of function-generated tensors whose $\varepsilon$-ranks tend to grow slowly with the number of samples $n$ (once $n$ is big enough) when the underlying latent dimension $m$ is large. It is unlikely that these specific types of function-generated data represent the whole of ``big data'', and we encourage further investigations in this area: seeking other classes of approximately low-rank matrices and tensors and tying them with the real-world datasets.
\section*{Acknowledgements}
I thank Dmitry Kharitonov for double-checking some of the presented arguments and Vladimir Kazeev for his valuable comments and feedback. I am grateful to Madeleine Udell for the discussion of \cite{udell2019big} and to the anonymous referees, whose comments have helped me to improve the presentation and tighten the rank bounds in \Cref{sec:dist}.
\appendix
\section{Multi-index notation}
\label{appendix:multindex}
Consider a smooth function $f : \Real^m \to \Real$, a point $\Point{x} \in \Real^m$, and a multi-index $\bm{\gamma} = (\gamma_1, \ldots, \gamma_m) \in \N^m_0$. The following notation is standard in multi-variate calculus and partial differential equations:
\begin{equation*}
    \bm{\gamma}! = \gamma_1! \cdots \gamma_m!, \quad \Point{x}^{\bm{\gamma}} = \Point{x}(1)^{\gamma_1} \cdots \Point{x}(m)^{\gamma_m}, \quad \partial^{\bm{\gamma}} f = \partial_{1}^{\gamma_1} \cdots \partial_{m}^{\gamma_m} f.
\end{equation*}
With its help, we can write down the Taylor series of an analytic function in a concise way:
\begin{equation*}
    f(\Point{x}) = \sum\nolimits_{\bm{\gamma} \in \N^m_0} \partial^{\bm{\gamma}} f(\Point{0}) \frac{\Point{x}^{\bm{\gamma}}}{\bm{\gamma}!}.
\end{equation*}
Using $|\bm{\gamma}| = \gamma_1 + \cdots + \gamma_m$, we can also write the truncated Taylor series:
\begin{equation*}
    f(\Point{x}) \approx \sum\nolimits_{|\bm{\gamma}| < \rho} \partial^{\bm{\gamma}} f(\Point{0}) \frac{\Point{x}^{\bm{\gamma}}}{\bm{\gamma}!}.
\end{equation*}

\section{Proof of Theorem~\ref{theorem:ut_tighter}}
\label{appendix:proof_ut}
\begin{proof}
For each $\rho \in \N$ and $\Point{x} \in \ball_m$, consider the Taylor series of $\Point{y} \mapsto f(\Point{x},\Point{y})$ at $\Point{y} = \Point{0}$,
\begin{equation*}
    f(\Point{x}, \Point{y}) = \sum_{|\bm{\gamma}| < \rho} \partial_{\Point{y}}^{\bm{\gamma}} f(\Point{x}, \Point{0}) \frac{\Point{y}^{\bm{\gamma}}}{\bm{\gamma}!} + E_{\rho}(\Point{x}, \Point{y}) = f_{\rho}(\Point{x}, \Point{y}) + E_{\rho}(\Point{x}, \Point{y}), \quad \Point{y} \in \ball_m,
\end{equation*}
with the remainder $E_{\rho}(\Point{x}, \Point{y})$ given in the Lagrange form as
\begin{equation*}
    E_{\rho}(\Point{x}, \Point{y}) = \sum_{|\bm{\gamma}| = \rho} \partial_{\Point{y}}^{\bm{\gamma}} f(\Point{x}, \alpha \Point{y}) \frac{\Point{y}^{\bm{\gamma}}}{\bm{\gamma}!}, \quad \alpha = \alpha(\Point{y}) \in (0,1).
\end{equation*}
Since the point $\alpha \Point{y}$ lies in $\ball_m$,
\begin{equation*}
    |E_{\rho}(\Point{x}, \Point{y})| \leq \sum_{|\bm{\gamma}| = \rho} \Norm{\partial_{\Point{y}}^{\bm{\gamma}} f(\Point{x}, \cdot)}{\Ell_\infty(\ball_m)} \frac{|\Point{y}^{\bm{\gamma}}|}{\bm{\gamma}!} \leq C M^{\rho} \Norm{f}{\Ell_\infty(\ball_m^2)} \sum_{|\bm{\gamma}| = \rho} \frac{|\Point{y}^{\bm{\gamma}}|}{\bm{\gamma}!}.
\end{equation*}
We have $\max_{i \in [m]} |\Point{y}(i)| \leq 1$ so that
\begin{equation*}
    \Norm{E_{\rho}(\Point{x},\cdot)}{\Ell_\infty(\ball_m)} \leq C M^{\rho} \Norm{f}{\Ell_\infty(\ball_m^2)} \sum_{|\bm{\gamma}| = \rho} \frac{1}{\bm{\gamma}!} = C M^{\rho} \frac{m^{\rho}}{\rho!} \Norm{f}{\Ell_\infty(\ball_m^2)},
\end{equation*}
where we used the multinomial theorem for $m^\rho = (1 + \cdots + 1)^\rho$. Fix any $\delta > 0$. For sufficiently large $\rho = \rho(\delta)$, the truncated Taylor series $f_\rho(\Point{x}, \Point{y})$ satisfies
\begin{equation*}
    \Norm{f - f_\rho}{\Ell_\infty(\ball_m^2)} \leq \delta \Norm{f}{\Ell_\infty(\ball_m^2)}.
\end{equation*}
Denote by $k_{m,\rho} \in \N$ the number of terms in $f_\rho$, i.e., the cardinality of $\set{\bm{\gamma} \in \N_0^m}{|\bm{\gamma}| < \rho}$. Let $\{ \Point{x}_i \}_{i = 1}^{n_1} \subset \ball_m$ and $\{ \Point{y}_j \}_{j = 1}^{n_2} \subset \ball_m$ be the points used to generate the matrix $\Matrix{F} \in \Real^{n_1 \times n_2}$. We can sample the function $f_\rho$ at these points to construct, as in \cref{example:1d}, matrices $\Matrix{X} \in \Real^{n_1 \times k_{m,\rho}}$, $\Matrix{Y} \in \Real^{n_2 \times k_{m,\rho}}$ and diagonal $\Matrix{D} \in \Real^{k_{m,\rho} \times k_{m,\rho}}$ such that
\begin{equation*}
    \Norm{\Matrix{F} - \Matrix{X} \Matrix{D} \Matrix{Y}^\trans}{\max} \leq \delta \Norm{f}{\Ell_\infty(\ball_m^2)}.
\end{equation*} 
Let $\Matrix{A} = \Matrix{X} \Matrix{D}^{\frac{1}{2}}$ and $\Matrix{B} = \Matrix{Y} \Matrix{D}^{\frac{1}{2}}$. Now, let us apply \cref{theorem:compression} to the matrix $\Matrix{A} \Matrix{B}^\trans = \Matrix{X} \Matrix{D} \Matrix{Y}^\trans$. The row-norms of $\Matrix{A}$ can be estimated as
\begin{align*}
    \Norm{\Matrix{A}(i,:)}{2}^2 &\leq \sum_{|\bm{\gamma}| = 0}^{\infty} \frac{1}{\bm{\gamma}!} |\partial_{\Point{y}}^{\bm{\gamma}} f(\Point{x}_i, \Point{0})|^2 \leq C^2 \Norm{f}{\Ell_\infty(\ball_m^2)}^2 \sum_{|\bm{\gamma}| = 0}^{\infty} \frac{1}{\bm{\gamma}!} M^{2|\bm{\gamma}|} \\
    &= C^2 \Norm{f}{\Ell_\infty(\ball_m^2)}^2 \bigg( \sum_{\gamma = 0}^{\infty} \frac{1}{\gamma!} M^{2\gamma} \bigg)^{m} = C^2 e^{m M^2} \Norm{f}{\Ell_\infty(\ball_m^2)}^2.
\end{align*}
Similarly, the row-norms of $\Matrix{B}$ are bounded by
\begin{equation*}
    \Norm{\Matrix{B}(j,:)}{2}^2 \leq \sum_{|\bm{\gamma}| = 0}^{\infty} \frac{1}{\bm{\gamma}!} |\Point{y}_j^{\bm{\gamma}}|^2 \leq \sum_{|\bm{\gamma}| = 0}^{\infty} \frac{1}{\bm{\gamma}!} = e^{m}.
\end{equation*}
Let $\Tilde{\varepsilon} \in (0,1)$. By \cref{theorem:compression}, there exists $\Matrix{G} \in \Real^{n_1 \times n_2}$ of rank at most $r = \lceil 9 \log(3 n_1 n_2) / \Tilde{\varepsilon}^2 \rceil$ such that
\begin{equation*}
    \Norm{\Matrix{A} \Matrix{B}^\trans - \Matrix{G}}{\max} \leq \tilde{\varepsilon} \cdot C e^{\frac{m}{2} (1 + M^2)} \Norm{f}{\Ell_\infty(\ball_m^2)},
\end{equation*}
and, as a consequence,
\begin{equation*}
    \Norm{\Matrix{F} - \Matrix{G}}{\max} \leq (\delta + \tilde{\varepsilon} \cdot C e^{\frac{m}{2} (1 + M^2)}) \Norm{f}{\Ell_\infty(\ball_m^2)}.
\end{equation*}
Note that $\Matrix{G}$ depends on $\delta$, take a vanishing sequence $\{ \delta_l \}$, and repeat the limiting argument from the proof of \cref{theorem:kernel_function}.The result follows once we choose $\tilde{\varepsilon} = \varepsilon C^{-1} e^{-\frac{m}{2} (1 + M^2)}$.
\end{proof}

\section{Comparison of rank bounds}
\label{appendix:worse_bound}

\subsection{Theorem~\ref{theorem:ut} and Equation~\ref{eq:analytical_bound}}

Let us find a sufficient condition for the rank bound of \cref{theorem:ut} to be looser than the rank bound \cref{eq:analytical_bound} for all $n$:
\begin{equation*}
    1 + 8 \log(2n+1) (1 + \tfrac{2}{\varepsilon} (C_u + C_v + 1))^2 > e^m \max\{ C/\varepsilon, (1 + e^2 M)^m \}.
\end{equation*}
Since $C_u > C^2 m^m$ and $C_v > 0$, this would follow from a stronger inequality
\begin{gather*}
    \tfrac{32 C^4}{\varepsilon^2} \log(2n+1) m^{2m} > e^m \max\{ C/\varepsilon, (1 + e^2 M)^m \}, \\
    \log(2n+1) > \tfrac{1}{32} \max\left\{ \tfrac{\varepsilon}{C^3} \tfrac{e^m}{m^{2m}}, \tfrac{\varepsilon^2}{C^4} \tfrac{e^m (1 + e^2 M)^m}{m^{2m}} \right\}.
\end{gather*}
As $\varepsilon \in (0,1)$ and $C \geq 1$, this would follow from a yet stronger inequality
\begin{equation*}
    \log(2n+1) > \left[ \tfrac{e(1 + e^2 M)}{m^2} \right]^m.
\end{equation*}
If the right-hand side is smaller than one, the inequality holds for all $n \in \N$. This gives a sufficient condition $M < (m^2 - e)/e^3$ and requires $m \geq 2$ to have $M > 0$.

\subsection{Theorem~\ref{theorem:ut_tighter} and Equation~\ref{eq:analytical_bound}}

Let us find a sufficient condition for the rank bound of \cref{theorem:ut_tighter} corresponding to the error $\varepsilon \Norm{f}{\Ell_\infty}$ to be looser than \cref{eq:analytical_bound} for all $n$:
\begin{gather*}
    \tfrac{9C^2}{\varepsilon^2} \log(3n^2) e^{m(1+M^2)} > e^m \max\{ C / \varepsilon, (1 + e^2 M)^m \}, \\
    \log(3n^2) > \tfrac{1}{9} \max\left\{ \tfrac{\varepsilon}{C} e^{-m M^2}, \tfrac{\varepsilon^2}{C^2} \left( \tfrac{1 + e^2 M}{e^{M^2}} \right)^m \right\}.
\end{gather*}
As $\varepsilon \in (0,1)$ and $C \geq 1$, this would follow from a stronger inequality
\begin{equation*}
    \log(3n^2) > \max\left\{ e^{-m M^2}, \left( \tfrac{1 + e^2 M}{e^{M^2}} \right)^m \right\}.
\end{equation*}
If the right-hand side is smaller than one, the inequality holds for all $n \in \N$. This gives a sufficient condition $e^{M^2} > 1 + e^2 M$, which is true for all $M > 1.597$.

\bibliographystyle{parts/siamplain}
\bibliography{}

\begin{thebibliography}{10}

\bibitem{adcock2022sparse}
{\sc B.~Adcock, S.~Brugiapaglia, and C.~G. Webster}, {\em Sparse Polynomial Approximation of High-Dimensional Functions}, vol.~25, SIAM, 2022, \url{https://doi.org/10.1137/1.9781611976885}.

\bibitem{alman2023capture}
{\sc J.~Alman and Z.~Song}, {\em How to capture higher-order correlations? {Generalizing} matrix softmax attention to {Kronecker} computation}, 2023, \url{https://doi.org/10.48550/arXiv.2310.04064}.

\bibitem{alman2024fast}
{\sc J.~Alman and Z.~Song}, {\em Fast attention requires bounded entries}, in NeurIPS, 2024, \url{https://doi.org/10.48550/arXiv.2302.13214}.

\bibitem{alon2013approximate}
{\sc N.~Alon, T.~Lee, A.~Shraibman, and S.~Vempala}, {\em The approximate rank of a matrix and its algorithmic applications}, in ACM STOC, 2013, pp.~675--684, \url{https://doi.org/10.1145/2488608.2488694}.

\bibitem{altschuler2023kernel}
{\sc J.~M. Altschuler and P.~A. Parrilo}, {\em Kernel approximation on algebraic varieties}, SIAM J Appl Algebra Geom, 7 (2023), pp.~1--28, \url{https://doi.org/10.1137/21M1425050}.

\bibitem{ballard2025tensor}
{\sc G.~Ballard and T.~G. Kolda}, {\em Tensor Decompositions for Data Science}, CUP, 2025.
\newblock \textsc{isbn}: \href{https://www.cambridge.org/core/books/tensor-decompositions-for-data-science/640814D308696CD61CB9112EA57B2911}{9781009471664}.

\bibitem{boyd2013quartic}
{\sc J.~P. Boyd and P.~W. McCauley}, {\em Quartic {Gaussian} and inverse-quartic {Gaussian} radial basis functions: the importance of a nonnegative {Fourier} transform}, Comput Math Appl, 65 (2013), pp.~75--88, \url{https://doi.org/10.1016/j.camwa.2012.10.014}.

\bibitem{budzinskiy2023quasioptimal}
{\sc S.~Budzinskiy}, {\em Quasioptimal alternating projections and their use in low-rank approximation of matrices and tensors}, 2023, \url{https://doi.org/10.48550/arXiv.2308.16097}.

\bibitem{budzinskiy2024distance}
{\sc S.~Budzinskiy}, {\em On the distance to low-rank matrices in the maximum norm}, Linear Algebra Appl, 688 (2024), pp.~44--58, \url{https://doi.org/10.1016/j.laa.2024.02.012}.

\bibitem{budzinskiy2025entrywise}
{\sc S.~Budzinskiy}, {\em Entrywise tensor-train approximation of large tensors via random embeddings}, SIAM J Matrix Anal Appl, 46 (2025), pp.~984--1005, \url{https://doi.org/10.1137/24M1649186}.

\bibitem{cai2016matrix}
{\sc T.~T. Cai and W.-X. Zhou}, {\em Matrix completion via max-norm constrained optimization}, Electron J Stat, 10 (2016), pp.~1493--1525, \url{https://doi.org/10.1214/16-EJS1147}.

\bibitem{cao20241}
{\sc W.~Cao, X.~Chen, S.~Yan, Z.~Zhou, and A.~Cichocki}, {\em 1-bit tensor completion via max-and-nuclear-norm composite optimization}, IEEE Trans Signal Process,  (2024), pp.~1--15, \url{https://doi.org/10.1109/TSP.2024.3427136}.

\bibitem{choromanski2020rethinking}
{\sc K.~Choromanski, V.~Likhosherstov, D.~Dohan, X.~Song, A.~Gane, T.~Sarlos, P.~Hawkins, J.~Davis, A.~Mohiuddin, L.~Kaiser, et~al.}, {\em Rethinking attention with performers}, 2020, \url{https://doi.org/10.48550/arXiv.2009.14794}.

\bibitem{foucart2020weighted}
{\sc S.~Foucart, D.~Needell, R.~Pathak, Y.~Plan, and M.~Wootters}, {\em Weighted matrix completion from non-random, non-uniform sampling patterns}, IEEE Trans Inf Theory, 67 (2020), pp.~1264--1290, \url{https://doi.org/10.1109/TIT.2020.3039308}.

\bibitem{ghadermarzy2019near}
{\sc N.~Ghadermarzy, Y.~Plan, and {\"O}.~Yilmaz}, {\em Near-optimal sample complexity for convex tensor completion}, Inf Inference, 8 (2019), pp.~577--619, \url{https://doi.org/10.1093/imaiai/iay019}.

\bibitem{gillis2019low}
{\sc N.~Gillis and Y.~Shitov}, {\em Low-rank matrix approximation in the infinity norm}, Linear Algebra Appl, 581 (2019), pp.~367--382, \url{https://doi.org/10.1016/j.laa.2019.07.017}.

\bibitem{han2023hyperattention}
{\sc I.~Han, R.~Jayaram, A.~Karbasi, V.~Mirrokni, D.~P. Woodruff, and A.~Zandieh}, {\em {HyperAttention}: Long-context attention in near-linear time}, 2023, \url{https://doi.org/10.48550/arXiv.2310.05869}.

\bibitem{harris2021deterministic}
{\sc K.~D. Harris and Y.~Zhu}, {\em Deterministic tensor completion with hypergraph expanders}, SIAM J Math Data Sci, 3 (2021), pp.~1117--1140, \url{https://doi.org/10.1137/20M1379745}.

\bibitem{horn1994topics}
{\sc R.~A. Horn and C.~R. Johnson}, {\em Topics in Matrix Analysis}, CUP, 1994.
\newblock \textsc{isbn}: \href{https://www.cambridge.org/us/universitypress/subjects/mathematics/algebra/topics-matrix-analysis}{9780521467131}.

\bibitem{horn2012matrix}
{\sc R.~A. Horn and C.~R. Johnson}, {\em Matrix Analysis}, CUP, 2~ed., 2012.
\newblock \textsc{isbn}: \href{https://www.cambridge.org/us/universitypress/subjects/mathematics/algebra/matrix-analysis-2nd-edition}{9780521548236}.

\bibitem{kolda2009tensor}
{\sc T.~G. Kolda and B.~W. Bader}, {\em Tensor decompositions and applications}, SIAM Rev, 51 (2009), pp.~455--500, \url{https://doi.org/10.1137/07070111x}.

\bibitem{lee2010practical}
{\sc J.~D. Lee, B.~Recht, N.~Srebro, J.~Tropp, and R.~R. Salakhutdinov}, {\em \href{https://proceedings.neurips.cc/paper_files/paper/2010/hash/9fe8593a8a330607d76796b35c64c600-Abstract.html}{Practical large-scale optimization for max-norm regularization}}, in NIPS, 2010.

\bibitem{linial2007complexity}
{\sc N.~Linial, S.~Mendelson, G.~Schechtman, and A.~Shraibman}, {\em Complexity measures of sign matrices}, Combinatorica, 27 (2007), pp.~439--463, \url{https://doi.org/10.1007/s00493-007-2160-5}.

\bibitem{linial2007lower}
{\sc N.~Linial and A.~Shraibman}, {\em Lower bounds in communication complexity based on factorization norms}, in ACM STOC, 2007, pp.~699--708, \url{https://doi.org/10.1145/1250790.1250892}.

\bibitem{matouvsek2020factorization}
{\sc J.~Matou{\v{s}}ek, A.~Nikolov, and K.~Talwar}, {\em Factorization norms and hereditary discrepancy}, Int Math Res Not, 2020 (2020), pp.~751--780, \url{https://doi.org/10.1093/imrn/rny033}.

\bibitem{matveev2023sketching}
{\sc S.~Matveev and S.~Budzinskiy}, {\em Sketching for a low-rank nonnegative matrix approximation: Numerical study}, Russ J Numer Anal Math Model, 38 (2023), pp.~99--114, \url{https://doi.org/10.1515/rnam-2023-0009}.

\bibitem{morozov2023optimal}
{\sc S.~Morozov, M.~Smirnov, and N.~Zamarashkin}, {\em On the optimal rank-1 approximation of matrices in the {Chebyshev} norm}, Linear Algebra Appl, 679 (2023), pp.~4--29, \url{https://doi.org/10.1016/j.laa.2023.09.007}.

\bibitem{oseledets2011tensor}
{\sc I.~Oseledets}, {\em Tensor-train decomposition}, SIAM J Sci Comput, 33 (2011), pp.~2295--2317, \url{https://doi.org/10.1137/090752286}.

\bibitem{oseledets2009breaking}
{\sc I.~Oseledets and E.~Tyrtyshnikov}, {\em Breaking the curse of dimensionality, or how to use {SVD} in many dimensions}, SIAM J Sci Comput, 31 (2009), pp.~3744--3759, \url{https://doi.org/10.1137/090748330}.

\bibitem{rahimi2007random}
{\sc A.~Rahimi and B.~Recht}, {\em \href{https://papers.nips.cc/paper_files/paper/2007/hash/013a006f03dbc5392effeb8f18fda755-Abstract.html}{Random features for large-scale kernel machines}}, in NIPS, 2007.

\bibitem{rennie2005fast}
{\sc J.~D. Rennie and N.~Srebro}, {\em Fast maximum margin matrix factorization for collaborative prediction}, in ICML, 2005, pp.~713--719, \url{https://doi.org/10.1145/1102351.1102441}.

\bibitem{salzo2018solving}
{\sc S.~Salzo, L.~Rosasco, and J.~Suykens}, {\em Solving $\ell_p$-norm regularization with tensor kernels}, in AISTATS, 2018, pp.~1655--1663, \url{https://doi.org/10.48550/arXiv.1707.05609}.

\bibitem{sanford2024representational}
{\sc C.~Sanford, D.~J. Hsu, and M.~Telgarsky}, {\em Representational strengths and limitations of transformers}, in NeurIPS, 2024, \url{https://doi.org/10.48550/arXiv.2306.02896}.

\bibitem{sarlos2024hardness}
{\sc T.~Sarlos, X.~Song, D.~Woodruff, and R.~Zhang}, {\em Hardness of low rank approximation of entrywise transformed matrix products}, in NeurIPS, 2024, \url{https://doi.org/10.48550/arXiv.2311.01960}.

\bibitem{schaback2001characterization}
{\sc R.~Schaback and H.~Wendland}, {\em Characterization and construction of radial basis functions}, in Multivariate Approximation and Applications, CUP, 2001, pp.~1--24.
\newblock \textsc{isbn}: \href{https://www.cambridge.org/us/universitypress/subjects/mathematics/numerical-analysis/multivariate-approximation-and-applications}{9780521800235}.

\bibitem{schollwock2011density}
{\sc U.~Schollw{\"o}ck}, {\em The density-matrix renormalization group in the age of matrix product states}, Ann Phys, 326 (2011), pp.~96--192, \url{https://doi.org/10.1016/j.aop.2010.09.012}.

\bibitem{srebro2004maximum}
{\sc N.~Srebro, J.~Rennie, and T.~Jaakkola}, {\em \href{https://proceedings.neurips.cc/paper/2004/hash/e0688d13958a19e087e123148555e4b4-Abstract.html}{Maximum-margin matrix factorization}}, in NIPS, 2004.

\bibitem{srebro2005rank}
{\sc N.~Srebro and A.~Shraibman}, {\em Rank, trace-norm and max-norm}, in COLT, Springer, 2005, pp.~545--560, \url{https://doi.org/10.1007/11503415_37}.

\bibitem{sultonov2023low}
{\sc A.~Sultonov, S.~Matveev, and S.~Budzinskiy}, {\em Low-rank nonnegative tensor approximation via alternating projections and sketching}, Comput Appl Math, 42 (2023), 68, \url{https://doi.org/10.1007/s40314-023-02211-2}.

\bibitem{udell2019big}
{\sc M.~Udell and A.~Townsend}, {\em Why are big data matrices approximately low rank?}, SIAM J Math Data Sci, 1 (2019), pp.~144--160, \url{https://doi.org/10.1137/18M1183480}.

\bibitem{vaswani2017attention}
{\sc A.~Vaswani, N.~Shazeer, N.~Parmar, J.~Uszkoreit, L.~Jones, A.~N. Gomez, {\L}.~Kaiser, and I.~Polosukhin}, {\em Attention is all you need}, in NeurIPS, 2017, \url{https://doi.org/10.48550/arXiv.1706.03762}.

\bibitem{vidal2003efficient}
{\sc G.~Vidal}, {\em Efficient classical simulation of slightly entangled quantum computations}, Phys Rev Lett, 91 (2003), 147902, \url{https://doi.org/10.1103/PhysRevLett.91.147902}.

\bibitem{wang2018numerical}
{\sc R.~Wang, Y.~Li, and E.~Darve}, {\em On the numerical rank of radial basis function kernels in high dimensions}, SIAM J Matrix Anal Appl, 39 (2018), pp.~1810--1835, \url{https://doi.org/10.1137/17M1135803}.

\bibitem{wang2020linformer}
{\sc S.~Wang, B.~Z. Li, M.~Khabsa, H.~Fang, and H.~Ma}, {\em Linformer: Self-attention with linear complexity}, 2020, \url{https://doi.org/10.48550/arXiv.2006.04768}.

\bibitem{zamarashkin2022best}
{\sc N.~L. Zamarashkin, S.~V. Morozov, and E.~E. Tyrtyshnikov}, {\em On the best approximation algorithm by low-rank matrices in {Chebyshev’s} norm}, Comput Math Math Phys, 62 (2022), pp.~701--718, \url{https://doi.org/10.1134/S0965542522050141}.

\end{thebibliography}
\end{document}